\definecolor{rltblue}{rgb}{0,0,0.4}
\definecolor{drkgreen}{rgb}{0,0.4,0}
\definecolor{drkred}{rgb}{0.5,0,0}
\newtheorem{thm}{Theorem}%[section]
\newtheorem{lemma}[thm]{Lemma}
\newtheorem{proposition}[thm]{Proposition}
\newtheorem{theorem}[thm]{Theorem}
\newtheorem{corollary}[thm]{Corollary}
\theoremstyle{definition}
\newtheorem{definition}[thm]{Definition}
\theoremstyle{remark}
\newtheorem{remark}[thm]{Remark}
\newtheorem{historic}[thm]{Historic Remark}
\newtheorem{claim}{Claim}
\theoremstyle{plain}
\newcounter{contenumi}
\def\upto{\mathop{\upharpoonright}}
\def\and{\mathrel{\&}}
\def\isom{\cong}
\def\Si{\Sigma}
\def\A{\mathcal{A}}
\def\B{\mathcal{B}}
\def\I{\mathcal{I}}
\def\om{\omega}
\def\bbar{\bar{b}}
\def\a{\alpha}
\def\A{\mathcal A}
\def\L{\mathcal L}
\def\cbar{{\bar{c}}}
\def\bbar{{\bar{b}}}
\def\dbar{{\bar{d}}}
\def\ctt{{\mathtt c}}
\def\itt{{\mathtt {in}}}
\def\Sic{\Si^\ctt}
\def\Dec{\Delta^\ctt}
\def\Pic{\Pi^\ctt}
\def\Sii{\Si^\itt}
\def\Dei{\Delta^\itt}
\def\bfDelta{{\bf \Delta}}
\def\Ahat{\hat{\A}}
\def\Si{\Sigma}
\def\om{\omega}
\def\forces{\Vdash}
\def\nforces{\nVdash}
\def\wck{\om_1^{CK}}
\def\omCK{\wck}
\def\wck{\om_1^{CK}}
 \def\Ahat{{\hat{\A}}}
 \def\Bhat{{\hat{\B}}}
\newcommand{\J}{\mathcal{J}}
\def\Ati{{\widetilde{\A}}}
\def\Bti{{\widetilde{\B}}}
\def\Do{\mathcal Dom}
\newcounter{been}
\newcounter{eebeen}
\newcommand{\Iso}[1]{\text{Iso}(#1)}
\renewcommand{\Ahat}{{\widehat{\mathcal{A}}}}
\renewcommand{\Bhat}{{\widehat{\mathcal{B}}}}
\newcommand{\Atilde}{{\widetilde{\mathcal{A}}}}
\newcommand{\Btilde}{{\widetilde{\mathcal{B}}}}
\newcommand{\comment}[1]{}
\newcommand{\res}{\upto}
\DeclareMathOperator{\id}{id}
\DeclareMathOperator{\negat}{neg}
\newcommand{\Dom}[2]{\Do_{#2}^{#1}}
\def\Fra{\mathfrak{F}}
\def\Gra{\mathfrak{G}}
\def\Aut{{\textrm{Aut}}}
\title{Borel Functors and Infinitary Interpretations}
\author[M. Harrison-Trainor]{Matthew Harrison-Trainor}
\address{Group in Logic and the Methodology of Science\\
University of California, Berkeley\\
 USA}
\email{matthew.h-t@math.berkeley.edu}
\urladdr{\href{http://math.berkeley.edu/~mattht}{http://math.berkeley.edu/$\sim$mattht}}
\author[R. Miller]{Russell Miller}
\address{Mathematics Dept., Queens College; Ph.D.\ Programs in Mathematics \& Computer Science, Graduate Center\\
City University of New York\\
USA}
\email{Russell.Miller@qc.cuny.edu}
\urladdr{\href{http://qcpages.qc.cuny.edu/~rmiller}{http://qcpages.qc.cuny.edu/$\sim$rmiller}}
\author[A. Montalb\'an]{Antonio Montalb\'an}
\address{Department of Mathematics\\
University of California, Berkeley\\
	USA}
\email{antonio@math.berkeley.edu}
\urladdr{\href{http://www.math.berkeley.edu/~antonio/index.html}{www.math.berkeley.edu/$\sim$antonio}}
\thanks{The first author was partially supported by the Berkeley Fellowship and NSERC grant PGSD3-454386-2014.
The second author was supported by NSF grant \# DMS-1362206
and by several PSC-CUNY research awards.
The third author was partially supported by the Packard Fellowship and NSF grant \# DMS-1363310.
This work took place in part at a workshop held by the Institute for Mathematical Sciences of the National University of Singapore.}
\begin{document}

\begin{abstract}
We introduce the notion of infinitary interpretation of structures.
In general, an interpretation between structures induces a
continuous homomorphism between their automorphism groups, and furthermore,
it induces a functor between the categories of copies of each structure.
We show that for the case of infinitary interpretation the reversals are also true:
Every Baire-measurable  homomorphism between the automorphism groups of two countable structures is induced by an infinitary interpretation, and
every Baire-measurable functor between the set of copies of two countable structures is induced by an infinitary interpretation.
Furthermore, we show the complexities are maintained in the sense that if the functor is $\bfDelta^0_\alpha$, then the interpretation that induces it is $\Dei_\alpha$ up to $\bfDelta^0_\alpha$ equivalence. 
\end{abstract}

\maketitle

%\tableofcontents

%

\section{Introduction}

Constructions that build new structures out of old ones are common throughout mathematics. For instance, given an integral domain $\B$, we might consider its fraction field or its polynomial ring. In model theory, a common way  of performing such constructions is using {\em interpretations}, where one structure is defined using tuples from the other, and the operations and relations of the new structure are defined using the operations and relations of the old one.
For instance, the fraction field of an integral domain $\B$ can be defined as a set of pairs of elements in $\B$ quotiented out by some definable equivalence relation, with the operations on the pairs defined using the operations in $\B$.
Interpretations are useful because they preserve some model theoretic properties of the structures or of their theories. For instance, if a structure $\A$ can be interpreted within a structure $\B$, then there is a homomorphism from the automorphism group of $\B$ to the automorphism group of $\A$ (\cite[Theorem 5.3.5]{Hodges93}).
Furthermore, if we assume these structures are countable, we get a function that maps copies of $\B$ with domain $\om$ to copies of $\A$ with domain $\om$, and one that maps isomorphisms between copies of $\B$ to isomorphisms between the respective copies of $\A$, preserving compositions.
We naturally view such a pair of functions as a {\em functor} from $\B$ to $\A$ (Definition \ref{def:functors}).
Functors induced by interpretations are always Borel.
In turn, a Borel functor (even a Baire-measurable one) from $\B$ to $\A$ induces a continuous homomorphism from the automorphism group of $\B$ to that of $\A$.
However, there are many such functors that do not come from elementary first-order interpretations.\footnote{For example, there is the functor which maps every copy of the trivial structure $\B$, with a countable domain and no relations, to the single structure $\A=(\omega,0,1,+,\cdot)$, and maps all isomorphisms  between copies of $\B$ to the identity map on $\A$.}
In the case of the polynomial ring, we can easily build a functor that maps copies of a ring $\B$ to copies of its polynomial ring $\B[X]$ in such a way that isomorphisms between copies of $\B$ translate to isomorphisms between the respective copies of $\B[X]$. 
However, $\B[X]$ cannot be interpreted in $\B$,
as we need tuples of arbitrary large size from $\B$ to code the polynomials in $\B[X]$.
In this paper we consider a more general notion of interpretation that we call {\em infinitary interpretation}, where the sets used in the interpretation need only be $\L_{\om_1 \om}$-definable and where, instead of using tuples of a fixed size for the interpretation, we allow tuples of different sizes (see Definition \ref{def: inf int} below).
We only consider countable structures, and so, whenever we refer to a structure, we assume it is countable and with domain $\omega$. 
These new interpretations still generate Borel functors from the interpreting structure to the interpreted structure exactly as above, and also continuous homomorphisms between their automorphism groups.
Our main theorem is the reversal: Each Borel (even Baire measurable) functor  from the copies of $\B$ to the copies of $\A$ is naturally isomorphic to one induced by an infinitary interpretation (Theorem \ref{thm:main}), and each continuous homomorphism $\Aut(\B)\to\Aut(\A)$ is induced by an infinitary interpretation (Theorem \ref{homo-to-interp}). 
Furthermore, the quantifier complexity of the interpretation is the same as the Borel complexity of the functor.
In a sense, this shows that infinitary interpretations are the most general kind of interpretations, at least if we restrict ourselves to countable structures.
One can view our result as saying that if one has a way of building $\A$ from $\B$, then a copy of $\A$ must already exist inside of $\B$.

Continuing this line of investigation, we obtain results towards the following question: What can we tell about a structure by looking at its automorphism group?
The first question along this lines that we consider is whether there is a syntactical condition on structures that is equivalent to them having the same automorphism group.  The answer is {\em infinitary bi-interpretability}: Two structures $\mathcal{A}$ and $\mathcal{B}$ are infinitarily bi-interpretable if each can be infinitarily interpreted in the other and the isomorphism taking $\mathcal{A}$ to the copy of $\mathcal{A}$ inside the copy of $\mathcal{B}$ inside $\mathcal{A}$, and the similar isomorphism with $\mathcal{A}$ and $\mathcal{B}$ reversed, are infinitarily definable in the respective structures (Definition \ref{defn:inf-biinterpretable}). 
This is equivalent to the existence of a continuous isomorphism between the
automorphism groups of the structures (Theorem \ref{thm:inf-bi-iso}).
(For the particular case of $\aleph_0$-categorical structures, this was already known
from a paper of Ahlbrandt and Ziegler \cite{AhlbrandtZiegler}.)
To show this, we prove that infinitary bi-interpretations correspond naturally (and bijectively) to Borel adjoint equivalences (Definition \ref{def: adj equiv cat}) of the categories of copies of the structures (Theorems \ref{thm:bi to eq} and \ref{thm:eq to bi}).
The second question is whether, and how, the existence of a set of indiscernibles within a structure is reflected in the automorphism group of the structure.
We will show that a structure has an infinitarily definable set of indiscernible equivalence classes on its tuples if and only if there is a continuous homomorphism from its automorphism group onto $S_\infty$  (Theorem \ref{thm:indiscernibles}).

This work grew out of a previous paper \cite{HTM3} by the three authors and Alexander Melnikov, which gives a one-to-one correspondence between effective interpretations and effective functors.
Effective interpretation is the right notion of interpretability needed for computability theory, and is exactly like  infinitary interpretation as we define it in Definition \ref{def: inf int}, but using only computable infinitary $\Si_1$-formulas.
This particular definition was introduced in \cite{MonFixed, MonICM}, but it is equivalent to the notion of $\Sigma$-definability without parameters, widely studied in Russia. 
On the other hand, the precise definition of computable functor was introduced in \cite{MPSS}, where it was shown to show all structures can be effectively coded by fields.
Both effective bi-interpretations and computable functors were introduced to formalize a longstanding idea from \cite{HKSS} that certain classes of structures are universal for computability-theoretic properties.
Some time later, we realized that with some more work, and via the use of forcing, we could extend our results from \cite{HTM3} through the Borel hierarchy.
We then noticed we could apply our results to homomorphisms between automorphism groups and infinitary indiscernibles.

%%%%%%%%%
\subsection{Infinitary interpretations}
\label{subsec:infint}

Let us now formally define the notion of infinitary interpretation.
Throughout this article, all signatures are relational and computable:
there is a computable function giving the arity of each of the countably
many predicates $P_0,P_1,\ldots$.  (It seems fairly clear that one could
extend our arguments to noncomputable countable signatures by
relativizing everything to the Turing degrees of the signatures.)

\begin{definition}\label{def: inf int}
A structure $\A = (A; P_0^\A,P_1^\A,...)$ (where $P_i^\A\subseteq A^{a(i)}$) is {\em infinitarily interpretable} in $\B$ if there are relations $\Dom{\A}{\B}$, $\sim$, $R_0, R_1,..$,
each $L_{\om_1,\om}$-definable without parameters in the language of $\B$,  such that
\begin{enumerate}
	\item $\Dom{\B}{\A}\subseteq \B^{<\om}$,
	\item $\sim$  is an equivalence relation on $\Dom{\B}{\A}$, 
	\item $R_i\subseteq (\Dom{\B}{\A})^{a(i)}$ is closed under $\sim$,
\end{enumerate}
and there exists a function $f^\B_\A\colon \Dom{\B}{\A} \to \A$ which induces an isomorphism: 
\[
f^\B_\A\colon (\Dom{\B}{\A}/\sim; R_0/\sim,R_1/\sim,...) \isom  (A; P_0^\A,P_1^\A,...) ,
\]
where $R_i/\!\sim$ stands for the $\sim$-collapse of $R_i$.
\end{definition}

In the definition above, when we refer to an $\L_{\om_1,\om}$-definable subset $S\subseteq \B^{<\om}$ we mean a countable sequence $\{S_1,S_2,\ldots\}$ of $\L_{\om_1,\om}$-definable subsets $S_i\subseteq \B^{i}$.
We refer the reader to \cite[Chapters 6 and 7]{HolyBible} for background on the infinitary language and its effective version.  Our notation $\Sic_\alpha$ refers to computable infinitary
$\Sigma_\alpha$ formulas (necessarily with $\alpha<\omCK$), and likewise for $\Dec_\alpha$.
We also sometimes use $\Sii_\alpha$, simply to emphasize that infinitary
(not necessarily computable) formulas are included.

We only deal with countable structures in this paper, and for a relation on a countable structure, being $\mathcal{L}_{\om_1\om}$ definable is equivalent to being invariant under automorphisms (\cite{Kueker68,Makkai69}).
One might then say that this is not really a syntactical definition. 
However, we will also be interested in the complexity of the interpretations defined in terms of the syntactic complexity of the formulas.
We say that an interpretation is $\Dei_\alpha$, or $\Dec_\alpha$, if all the relations $\Dom{\A}{\B}, \sim, R_0, R_1,..$ are.
(In the lightface case, when we refer to a $\Dec_\alpha$-definable subset $S\subseteq \B^{<\om}$ we mean a computable sequence (of indices) $\{s_1,s_2,...\}$ of $\Dec_\alpha$-definable subsets $S_i\subseteq \B^{i}$.
Similarly, when we refer to a $\Dec_\alpha$-definable sequence $\{S_k:k\in\om\}$ of subsets of  $\B^{<\om}$, we mean that the sequence of indices for the $S_k$'s is computable.)

Notice that given a presentation of a structure $\A$ with domain $\om$, we get a presentation of $\Aut(\A)$ as a subgroup of $S_\infty$.
The automorphism group of a different presentation would be a different subgroup of $S_\infty$, although these two subgroups would be conjugated by the isomorphism between the presentations.
Given fixed copies of $\A$ and $\B$ with domain $\om$, an infinitary interpretation induces a map between their automorphism groups in an obvious way.

\begin{definition}
\label{defn:G_I}
To each interpretation $\I$ of $\A$ in $\B$ as in Definition \ref{def: inf int}, we associate a
homomorphism $G_\I\colon \Aut(\B)\to\Aut(\A)$ as follows:
$$G_\I(f) =  f^\B_\A \circ \tilde{f} \circ ({f^\B_\A})^{-1}.$$
Here $\tilde{f}$ permutes $\Dom{\B}{\A}$ as defined by the given $f$, and hence preserves $\sim$.
%This works because each $f\in \Aut(\B)$ also defines a permutation $\tilde{f}$ of $\Dom{\B}{\A}$ preserving $\sim$.
%We let $G_\I(f) =  f^\B_\A \circ \tilde{f} \circ ({f^\B_\A})^{-1}$.
\end{definition} 

Throughout this paper, we use $\tilde{f}$ (where $f$ is a map with domain $\B$) to denote the induced map on tuples from $\Dom{\B}{\A}$.

It is not hard to see that $G_\I$ is a continuous homomorphism. 
(Let us remark that every Baire-measurable homomorphism between Polish groups is continuous (see \cite[Theorem 2.3.3]{GaoBook}) and all automorphism groups are Polish (see \cite[Exercise 2.4.7]{GaoBook}).)
One of the main results of this paper is that all continuous homomorphisms between automorphism groups are induced by infinitary interpretations. 

\begin{theorem}\label{homo-to-interp}
Let $\A$ and $\B$ be countable structures.
Every continuous homomorphism from $\Aut(\B)$ into $\Aut(\A)$ is of the form $G_\I$ for some infinitary interpretation $\I$ of $\A$ in $\B$.
\end{theorem}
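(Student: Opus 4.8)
The plan is to reduce Theorem \ref{homo-to-interp} to the functorial version of our main result. A continuous homomorphism $\phi\colon\Aut(\B)\to\Aut(\A)$ is essentially a single object in the category of copies of $\B$ acting on a single object in the category of copies of $\A$; we want to expand this germ of data into a full Borel functor $F$ from copies of $\B$ to copies of $\A$, and then apply Theorem \ref{thm:main} to obtain an infinitary interpretation inducing $F$, hence (as one checks) inducing $\phi$ on automorphism groups. So the first step is to construct $F$. Fix the given copies $\B$ and $\A$ with domain $\om$. Given an arbitrary copy $\hat\B$ of $\B$, we wish to define $F(\hat\B)$, a copy $\hat\A$ of $\A$, and given an isomorphism $g\colon\hat\B_1\to\hat\B_2$ we wish to define $F(g)\colon F(\hat\B_1)\to F(\hat\B_2)$, all Borel in the codes, so that $F(\B)=\A$ and so that on $\Aut(\B)$ the action $F$ restricts to $\phi$.

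The key construction is the following. The space of copies of $\B$ with domain $\om$ is $\{p\in 2^\om : p \text{ codes a structure isomorphic to }\B\}$; the group $\Aut(\B)\le S_\infty$ acts on the set of isomorphisms $\B\to\hat\B$ by precomposition, and this action is free and transitive, so the set $\mathrm{Iso}(\B,\hat\B)$ is a torsor over $\Aut(\B)$. The plan is: using a Borel selector (available since the orbit equivalence relation of $S_\infty$ acting on copies of $\B$ is Borel, with Borel transversal-type structure, or more concretely by searching for the least isomorphism in some effective enumeration relative to $\hat\B$), pick for each $\hat\B$ a distinguished isomorphism $h_{\hat\B}\colon\B\to\hat\B$ in a Borel way, with $h_\B=\id$. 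Then define $F(\hat\B)$ to be the pushforward of $\A$ along the ``transported'' copy of $\phi$: concretely, for $f\in\Aut(\B)$ the automorphism $h_{\hat\B}\circ f\circ h_{\hat\B}^{-1}$ of $\hat\B$ should be sent to $\phi(f)$ acting on a fixed underlying set, and we let $F(\hat\B)$ carry exactly the relations making $\mathrm{Iso}(\A,F(\hat\B))$ the $\phi$-twist of $\mathrm{Iso}(\B,\hat\B)$. For a morphism $g\colon\hat\B_1\to\hat\B_2$, the element $h_{\hat\B_2}^{-1}\circ g\circ h_{\hat\B_1}\in\Aut(\B)$ gets sent by $\phi$ to an automorphism of $\A$, which conjugated by the analogous distinguished isomorphisms on the $\A$-side yields the morphism $F(g)\colon F(\hat\B_1)\to F(\hat\B_2)$. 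One checks functoriality (this is a routine cocycle computation, since $h_{\hat\B_2}^{-1}\circ(g_2g_1)\circ h_{\hat\B_0} = (h_{\hat\B_2}^{-1}g_2 h_{\hat\B_1})(h_{\hat\B_1}^{-1}g_1 h_{\hat\B_0})$ and $\phi$ is a homomorphism) and that $F$ is Borel since $\hat\B\mapsto h_{\hat\B}$ is Borel and $\phi$ is continuous. Restricting to $\hat\B_1=\hat\B_2=\B$ with $h_\B=\id$, $F$ acts on $\Aut(\B)$ exactly as $\phi$.

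Then apply Theorem \ref{thm:main}: $F$ is naturally isomorphic to a functor induced by an infinitary interpretation $\I$ of $\A$ in $\B$. Finally I would check that a natural isomorphism of functors does not change the induced homomorphism on automorphism groups up to conjugacy by a fixed automorphism of $\A$ — and in fact, by absorbing that automorphism into the choice of $f^\B_\A$ in $\I$ (replacing $f^\B_\A$ by its composition with a fixed automorphism of $\A$, which keeps all the $\L_{\om_1\om}$-definable relations the same up to the same fixed permutation and hence keeps the interpretation an infinitary interpretation), we may assume $G_\I=\phi$ exactly. This last bookkeeping step, reconciling ``$G_\I$ equals $\phi$ on the nose'' versus ``$G_\I$ equals $\phi$ up to the natural isomorphism produced by Theorem \ref{thm:main},'' is where I expect to spend the most care; everything else is transport of structure and a Borel-selector argument. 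The main genuine obstacle is making the selector $\hat\B\mapsto h_{\hat\B}$ Borel while keeping $h_\B=\id$ and ensuring the resulting $F$ is genuinely a functor (not just a functor up to natural isomorphism) — but since we only need $F$ up to natural isomorphism to invoke Theorem \ref{thm:main}, even a lax version of the selector suffices, which removes the difficulty.
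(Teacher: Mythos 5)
Your plan is the same as the paper's: build a Borel functor $F\colon\Iso{\B}\to\Iso{\A}$ out of the given continuous homomorphism $\phi$ by conjugating with a Borel family of isomorphisms $h_{\hat\B}\colon\B\to\hat\B$ with $h_\B=\id$, then invoke Theorem~\ref{thm:main} to get an interpretation, and finally absorb the resulting natural isomorphism into a change of $f^\B_\A$. The functor construction, the cocycle check of functoriality, and the final absorption step are all correct and are essentially what the paper does (in Theorem~\ref{thm:homo-to-functor} and the remark following Corollary~\ref{cor:baire-to-borel}; the paper even simplifies by just setting $F(\hat\B)=\A$ for every $\hat\B$, which you could do too rather than introducing a ``$\phi$-twist''). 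Your final sentence, however, has things backwards: the difficulty was never making $F$ a genuine functor (your cocycle computation already handles that for \emph{any} Borel choice of $h_{\hat\B}$, and Theorem~\ref{thm:main} requires an honest functor as input, not a ``functor up to natural isomorphism''); the difficulty is the \emph{existence} of a Borel selector $\hat\B\mapsto h_{\hat\B}$ in the first place, which you assert but do not establish.

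That assertion is the one genuine gap. Neither of your two stated reasons is a proof: ``the orbit equivalence relation is Borel'' is about the equivalence relation on the space of all structures, not about uniformizing the closed-valued relation $\hat\B\mapsto\{h\in S_\infty : h\colon\B\cong\hat\B\}$, and these are different things; and ``searching for the least isomorphism in some effective enumeration relative to $\hat\B$'' does not give a Borel function, because without a prior bound on complexity there is no $\hat\B$-computable (or fixed-level-arithmetic-in-$\hat\B$) witness to search over. The content here is a theorem, not a triviality. The paper obtains the Borel selector by appealing to the result (\cite{MonScott}) that every countable structure $\B$ is uniformly $\Delta^0_\alpha$-relatively categorical on a cone, where $\alpha$ is the Scott rank of $\B$; the uniformizing operator is the desired $\Gamma^{\hat\B}=h_{\hat\B}^{-1}$. (One can also argue via the Scott/back-and-forth analysis that $\hat\B\mapsto\Iso{\B,\hat\B}$ is Borel into the Effros space of closed subsets of $S_\infty$ and then apply a selection theorem, but that too passes through Scott rank.) To complete your proof you should cite one of these arguments in place of the heuristic.
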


Note that there do exist structures whose automorphism groups are isomorphic as groups, but not as topological groups \cite{EvansHewitt}. So we cannot drop the hypothesis of continuity. On the other hand, there are models of $\text{ZF} + \text{DC}$ such that every homomorphism between polish groups is continuous \cite{Solovay,Shelah} and so we might expect such examples to be the exception.
 
As a corollary of this theorem we give a characterization, in terms of the automorphism group of a structure $\A$, for $\A$ to have an absolutely indiscernible set of $\L_{\omega_1 \omega}$-imaginary elements (i.e., an absolutely indiscernible set of equivalence classes under some $\L_{\omega_1 \omega}$-definable equivalence relation).

\begin{theorem}\label{thm:indiscernibles}
Let $\A$ be a countable structure. 
The following are equivalent:
\begin{enumerate}
\item There is a continuous homomorphism from $\Aut(\A)$ onto $S_\infty$.
\item There is an $n$, an $\L_{\om_1 \om}$-definable $D\subset A^n$, and an $\L_{\om_1 \om}$-definable equivalence relation $E\subset D^2$ with infinitely many equivalence classes and such that the $E$-equivalence classes are absolutely indiscernible, in the sense that every permutation of the $E$-equivalence classes extends to an automorphism of $\A$.
\end{enumerate}
\end{theorem}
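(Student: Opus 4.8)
The plan is to prove both directions using the correspondence between interpretations and homomorphisms of automorphism groups established in Theorems~\ref{homo-to-interp} and~\ref{defn:G_I}, together with the observation that $S_\infty$ is, up to continuous isomorphism, the automorphism group of the pure countable set $(\omega)$ (or more conveniently the automorphism group of a structure consisting of infinitely many disjoint infinite unary predicates, on which $S_\infty$ acts by permuting the predicates — but the cleanest model is just $S_\infty = \Aut(\omega, =)$).

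\textbf{From (2) to (1).} Suppose we have $n$, an $\L_{\om_1\om}$-definable $D \subseteq A^n$, and an $\L_{\om_1\om}$-definable equivalence relation $E$ on $D$ with infinitely many classes, all absolutely indiscernible. Since $D$ and $E$ are automorphism-invariant (by Kueker--Makkai, as noted in the excerpt), every automorphism $g \in \Aut(\A)$ permutes $D$ and respects $E$, hence induces a permutation $\pi(g)$ of the set $D/E$ of $E$-classes. Fixing a bijection of $D/E$ with $\omega$, this gives a map $\pi\colon \Aut(\A) \to S_\infty$; it is clearly a homomorphism, and it is continuous because the image of a basic open set (automorphisms fixing finitely many $E$-classes setwise and mapping certain classes to certain classes) is determined by the action on finitely many tuples from $A$, so $\pi$ is continuous by the standard criterion (preimages of basic clopen sets in $S_\infty$ are open in $\Aut(\A)$: to fix the images of finitely many $E$-classes it suffices to fix the images of finitely many representative tuples, which is an open condition). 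Surjectivity of $\pi$ is exactly the absolute indiscernibility hypothesis: every permutation of the $E$-classes extends to an automorphism of $\A$. This gives a continuous homomorphism onto $S_\infty$.

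\textbf{From (1) to (2).} Suppose $h\colon \Aut(\A) \to S_\infty$ is a continuous surjective homomorphism. View $S_\infty$ as $\Aut(\mathcal{N})$ where $\mathcal{N}$ is the structure with domain $\omega$ and no relations (so $\Aut(\mathcal{N}) = S_\infty$). By Theorem~\ref{homo-to-interp}, $h = G_\I$ for some infinitary interpretation $\I$ of $\mathcal{N}$ in $\A$. Unpacking Definition~\ref{def: inf int}: we get an $\L_{\om_1\om}$-definable $\Dom{\A}{\mathcal{N}} \subseteq A^{<\omega}$, an $\L_{\om_1\om}$-definable equivalence relation $\sim$ on it, and an isomorphism $f^\A_{\mathcal{N}}\colon (\Dom{\A}{\mathcal{N}}/{\sim}) \isom (\omega, =)$ — i.e., $\Dom{\A}{\mathcal{N}}/{\sim}$ is an $\L_{\om_1\om}$-definable quotient of a definable subset of $A^{<\omega}$ with exactly countably-infinitely many classes. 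Since $h = G_\I$ is onto $S_\infty = \Aut(\mathcal{N})$, every permutation of the classes $\Dom{\A}{\mathcal{N}}/{\sim}$ is induced by an automorphism of $\A$ — that is, these classes are absolutely indiscernible. The only gap with statement (2) is that (2) asks for a definable $D \subseteq A^n$ for a single fixed arity $n$, whereas the interpretation uses tuples of varying length. To fix this, note $\Dom{\A}{\mathcal{N}} = \bigcup_i S_i$ with $S_i \subseteq A^i$ each $\L_{\om_1\om}$-definable; pick an $i$ with $S_i$ meeting infinitely many $\sim$-classes (such $i$ exists since there are infinitely many classes and only countably many $i$), and then pass to the substructure of $A^i$ consisting of those tuples in $S_i$ whose $\sim$-class is hit by $S_i$ — actually more carefully, let $D \subseteq A^n$ (with $n = i$) be $S_i$, and let $E$ be the restriction of $\sim$ to $S_i$; this $E$ has infinitely many classes. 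Absolute indiscernibility of the $E$-classes follows because $\Aut(\A)$ already acts as the full $S_\infty$ on all of $\Dom{\A}{\mathcal{N}}/{\sim}$, and the subgroup of $S_\infty$ stabilizing the subset corresponding to $S_i/{\sim}$ (which is infinite and co-infinite, or cofinite — either way) still surjects onto the full symmetric group of that subset. Thus $D, E, n$ witness (2).

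\textbf{Main obstacle.} The technical heart is the second direction's reconciliation of "tuples of varying length" (inherent to infinitary interpretations) with the "single fixed arity $n$" demanded by clause (2). One must argue carefully that restricting to tuples of a single length $n$ still retains infinitely many $\sim$-classes and that absolute indiscernibility of the full family of classes transfers to the sub-family — the latter requires checking that for an infinite set $X$ and an infinite subset $Y \subseteq X$, every permutation of $Y$ extends to a permutation of $X$ (trivial: extend by the identity on $X \setminus Y$), and then pulling this back through the surjection $h = G_\I$. A secondary point requiring care is verifying continuity and surjectivity claims against the precise topology on $S_\infty$ and the precise form of $G_\I$ from Definition~\ref{defn:G_I}; these are routine but must be done. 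One should also double-check that one may indeed take $S_\infty = \Aut(\mathcal{N})$ for $\mathcal{N}$ the empty-signature structure on $\omega$, which is immediate.
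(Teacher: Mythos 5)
Your proof follows the same route as the paper's for both directions: (2)$\Rightarrow$(1) via the induced permutation action on $E$-classes, and (1)$\Rightarrow$(2) by taking $\B$ to be the trivial structure on $\omega$, invoking Theorem~\ref{homo-to-interp} to obtain an infinitary interpretation $\I$ of $\B$ in $\A$ with $H = G_\I\upto\Aut(\A)$, setting $D$ to be the domain of the interpretation and $E$ to be $\sim$, and reading off absolute indiscernibility from surjectivity together with the definition of $G_\I$. All of that matches the paper.

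However, your reduction to a single arity $n$ has a genuine gap. You assert that since $D = \bigcup_i S_i$ carries infinitely many $\sim$-classes and there are only countably many $i$, some $S_i$ must meet infinitely many classes. That pigeonhole reasoning is false: infinitely many classes spread over countably many pieces need not concentrate infinitely in any one piece --- a priori each $S_i$ could meet only finitely many classes, with the count growing in $i$. You flag this step as ``the technical heart,'' but the justification you offer does not work, and your Main Obstacle paragraph never supplies a correct one.

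The paper resolves this by using surjectivity a second time rather than counting. Fix any $n$ with $D \cap A^n \neq \emptyset$ and pick $\bar a$ in it; let $x = f_\B^\A(\bar a) \in \B$. Choose distinct $y_1, y_2, \ldots \in \B$ all different from $x$, and permutations $h_i$ of $\B$ with $h_i(x) = y_i$. By surjectivity of $H$, pick $g_i \in \Aut(\A)$ with $G_\I(g_i) = h_i$; unwinding the definition of $G_\I$ gives $f_\B^\A(g_i(\bar a)) = y_i$, so the tuples $g_i(\bar a)$ lie in pairwise distinct $\sim$-classes, all distinct from that of $\bar a$. Since automorphisms preserve tuple length and $D$ is automorphism-invariant, $g_i(\bar a) \in D \cap A^n$ for every $i$, so $D \cap A^n$ meets infinitely many $\sim$-classes. (In fact this argument shows $D \cap A^n$ meets \emph{every} $\sim$-class.) With that patched, your ``extend by the identity on the remaining classes and lift through $H$'' step for transferring indiscernibility to the restricted family is correct and matches the paper.
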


The theorem above shows the connections behind the new proof by Baldwin, Friedman, Koerwein, and Laskowski \cite{BFKL} and the original proof of a result of Hjorth \cite{Hjorth07} that states that if there is a counterexample to Vaught's conjecture, there is one with no copies of size $\aleph_2$.
For this, Hjorth's proof started by considering a structure whose automorphism group divides $S_\infty$ (i.e., there is an onto continuous homomorphism from a closed subgroup of the automorphism group onto $S_\infty$) and then used descriptive set theoretic tools. 
This proof is hard to visualize for those outside of descriptive set theory,
and so Baldwin, Friedman, Koerwein, and Laskowski found another proof starting from a structure that has a set of absolute indiscernibles.
It is suggested in  \cite{BFKL} that the use of absolute indiscernibles is in a sense the model theoretic version of the use of the divisibility of $S_\infty$ by the automorphism group.
The theorem above makes this sense precise. 

In general it is necessary that we look at equivalence classes to find the indiscernibles, as it was shown in \cite{HIK} that every structure is bi-interpretable with one that has no triple of indiscernibles.

We also show (Theorem \ref{thm: order indiscernibles}) that a structure has absolute order indiscernibles if and only if there is a continuous homomorphism from $\Aut(\A)$ onto $\Aut(\mathbb{Q})$.

We will also consider bi-interpretations. Two structures are bi-interpretable if they are each interpretable in the other, and the compositions are definable:

\begin{definition}
\label{defn:inf-biinterpretable}
Two structures $\A$ and $\B$ are {\em infinitarily bi-interpretable} if there are interpretations of each structure in the other as in Definition \ref{def: inf int} such that the compositions
\[
f^\A_\B \circ \tilde{f}^\B_\A\colon \Do_\B^{(\Do_\A^\B)} \to \B 
\quad\mbox{ and }\quad
f^\B_\A \circ \tilde{f}^\A_\B\colon \Do_\A^{(\Do_\B^\A)} \to \A 
\]
are $\L_{\omega_1 \omega}$-definable in $\B$ and $\A$ respectively.
(Here $\Do_\B^{(\Do_\A^\B)}\subseteq (\Do_\A^\B)^{<\om}$, and $\tilde{f}^\B_\A\colon (\Do_\A^\B)^{<\om}\to \A^{<\om}$ is the obvious extension of $f^\B_\A\colon \Do_\A^\B\to \A$ mapping $\Do_\B^{(\Do_\A^\B)}$ to $\Do_\B^\A$.)
\end{definition}

Two structures which are bi-interpretable behave in the same way. In particular, we get a continuous isomorphism of the automorphism groups of the two structures. For this, the fact that the two $\L_{\omega_1 \omega}$-definable isomorphisms are of the form $f^\A_\B \circ \tilde{f}^\B_\A$ and $f^\B_\A \circ \tilde{f}^\A_\B$ for some $f^\B_\A$ and $f^\A_\B$ is vital.

\begin{theorem}\label{thm:inf-bi-iso}
Two countable structures $\A$ and $\B$ are infinitarily bi-interpret\-able if and only if their automorphism groups are Baire-measurably isomorphic. 
Furthermore,  every continuous isomorphism from $\Aut(\B)$ onto $\Aut(\A)$ is of the form $G_\I$ for some infinitary bi-interpretation $\I$ of $\A$ in $\B$.
\end{theorem}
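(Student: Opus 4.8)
\textbf{Proof proposal for Theorem \ref{thm:inf-bi-iso}.}

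The plan is to reduce everything to the category-theoretic reformulation promised in the introduction, namely the correspondence between infinitary bi-interpretations and Borel adjoint equivalences of the categories of copies (Theorems \ref{thm:bi to eq} and \ref{thm:eq to bi}), together with the homomorphism-to-interpretation machinery of Theorem \ref{homo-to-interp}. First I would dispose of the easy direction: if $\A$ and $\B$ are infinitarily bi-interpretable, with interpretations $\I$ of $\A$ in $\B$ and $\J$ of $\B$ in $\A$ witnessing this, then $G_\I\colon\Aut(\B)\to\Aut(\A)$ and $G_\J\colon\Aut(\A)\to\Aut(\B)$ are continuous homomorphisms, and the fact that the compositions $f^\A_\B\circ\tilde f^\B_\A$ and $f^\B_\A\circ\tilde f^\A_\B$ are $\L_{\om_1\om}$-definable in $\B$ and $\A$ respectively forces $G_\J\circ G_\I$ and $G_\I\circ G_\J$ to be the identity on $\Aut(\B)$ and $\Aut(\A)$. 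Here the point flagged before the theorem is essential: because the definable isomorphism has the specific shape $f^\A_\B\circ\tilde f^\B_\A$ with the \emph{same} $f^\B_\A$ that defines $G_\I$, conjugation by it commutes with the induced action on tuples, so $G_\J(G_\I(g)) = (f^\A_\B\circ\tilde f^\B_\A)\circ g\circ (f^\A_\B\circ\tilde f^\B_\A)^{-1}$, and this is an inner automorphism of $\Aut(\B)$ given by an element fixed (up to the definable iso) by all of $\Aut(\B)$, hence the identity. So $G_\I$ is a continuous (a fortiori Baire-measurable) isomorphism.

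For the converse, suppose $\Phi\colon\Aut(\B)\to\Aut(\A)$ is a Baire-measurable isomorphism; by the cited results of Gao it is continuous, as is $\Phi^{-1}$. Apply Theorem \ref{homo-to-interp} to $\Phi$ to get an infinitary interpretation $\I$ of $\A$ in $\B$ with $G_\I=\Phi$, and to $\Phi^{-1}$ to get an interpretation $\J$ of $\B$ in $\A$ with $G_\J=\Phi^{-1}$. The content to be verified is that $\I$ and $\J$ can be chosen so that the round-trip isomorphisms $f^\A_\B\circ\tilde f^\B_\A$ and $f^\B_\A\circ\tilde f^\A_\B$ are $\L_{\om_1\om}$-definable in $\B$ and $\A$, i.e.\ that the pair $(\I,\J)$ is a genuine bi-interpretation and not merely a pair of one-way interpretations whose induced maps happen to invert each other on automorphism groups. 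The natural route is to pass through the categories: by Theorem \ref{thm:eq to bi} it suffices to produce a Borel adjoint equivalence between the category of copies of $\A$ and the category of copies of $\B$ inducing $\Phi$, and then translate it back via Theorem \ref{thm:bi to eq}. A continuous group isomorphism $\Phi$ does yield functors in both directions (restrict the functors arising from Theorem \ref{homo-to-interp} to the groupoids of copies), and the compositions in each direction are functors on the groupoid of copies inducing the identity on $\Aut$; the main task is to check that such a functor is naturally isomorphic to the identity functor via a \emph{Borel} natural transformation, and that the resulting natural isomorphisms can be chosen to be inverse to one another, i.e.\ that we have an adjoint equivalence rather than a mere equivalence. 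This last adjointness bookkeeping—choosing the unit and counit compatibly (the triangle identities)—is the step I expect to be the main obstacle, though it is a standard fact from category theory that any equivalence can be improved to an adjoint equivalence, so the real work is only in verifying that the improvement stays within the Borel category.

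Once we have the Borel adjoint equivalence, Theorem \ref{thm:eq to bi} hands us an infinitary bi-interpretation, and by the compatibility of the bijective correspondences in Theorems \ref{thm:bi to eq} and \ref{thm:eq to bi} with the passage to automorphism groups, the associated $G_\I$ agrees with the original $\Phi$; this also establishes the ``furthermore'' clause that every continuous isomorphism is of the form $G_\I$ for a bi-interpretation. I would also record, as with the companion theorems, that the construction is uniform enough that the complexity-preservation statement from Theorem \ref{thm:main} applies, so that if $\Phi$ is sufficiently simple the bi-interpretation obtained is correspondingly low in the hierarchy, although this is not needed for the bare statement of Theorem \ref{thm:inf-bi-iso}.
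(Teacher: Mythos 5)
Your overall plan is sound, and both halves are salvageable, but there is a genuine gap in the converse direction that the paper actually resolves for you and that you seem to have missed.

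For the easy direction (bi-interpretation $\Rightarrow$ isomorphism) you give a direct argument, computing $G_\J\circ G_\I$ and appealing to the automorphism-invariance of the definable composite $f^\A_\B\circ\tilde f^\B_\A$ to conclude it conjugates $\tilde{\tilde g}$ to $g$. The phrasing about "an inner automorphism ... given by an element fixed by all of $\Aut(\B)$" is garbled --- what actually happens is that $G_\J(G_\I(g)) = \phi\circ\tilde{\tilde g}\circ\phi^{-1}$ where $\phi = f^\A_\B\circ\tilde f^\B_\A$, and $\L_{\om_1\om}$-definability of $\phi$ makes it equivariant, i.e.\ $\phi\circ\tilde{\tilde g}=g\circ\phi$ --- but the idea is right, and it is a perfectly good alternative to the paper's implicit route (Theorems \ref{thm:bi to eq} and \ref{thm:eq-to-iso} together).

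For the hard direction, your plan is to get interpretations from Theorem \ref{homo-to-interp} applied to $\Phi$ and $\Phi^{-1}$, recognize that the round-trip isomorphisms may fail to be definable, and then escape via Theorem \ref{thm:eq to bi} after constructing a Borel adjoint equivalence. The gap is in the construction of that adjoint equivalence: you treat it as "the main obstacle" and propose to handle it by the abstract category-theoretic fact that any equivalence can be upgraded to an adjoint one, plus a check that the upgrade is Borel. But this is exactly what the paper's Theorem \ref{thm:iso-to-functor} does, and it does so \emph{directly}, not by upgrading: starting from a continuous isomorphism $H$, it builds $F$, $G$, $\eta$, $\epsilon$ explicitly from the Borel selection $\Gamma$ of isomorphisms to the fixed copies, and verifies the triangle identities by computation (using $\Gamma^{\B}=\id_{\B}$, $\Gamma^{\A}=\id_{\A}$). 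Your proposed rederivation would still need to show, before applying the abstract upgrade, that $F_\I\circ F_\J$ and $F_\J\circ F_\I$ are naturally isomorphic to the identity via \emph{Borel} natural transformations (the content of the $\Gamma$ machinery, as in Corollary \ref{cor:baire-to-borel}), and that the Mac Lane modification of $\epsilon$ is given by Borel operations; neither is addressed. You also cite Theorem \ref{homo-to-interp} as the source of your functors, but that theorem produces interpretations, not functors --- the relevant statements are Theorems \ref{thm:homo-to-functor} and \ref{thm:iso-to-functor}, which you appear not to have noticed. Replacing your "main obstacle" with a citation to Theorem \ref{thm:iso-to-functor}, followed by Theorem \ref{thm:eq to bi}, gives a complete and clean proof, which is the route the paper intends.
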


%%%%%%%%%%%%%%%%%%%%%%%%%%%%%%%
\subsection{Functors}

Throughout the paper, 
we write $\Iso{\A}$ for the isomorphism class of a countably infinite structure $\A$:
\[
 \Iso{\A} = \{\Ahat~:~\Ahat\cong\A~\&~\text{dom}(\Ahat)=\omega\}.
 \]
We will regard $\Iso{\A}$ as a category, with the copies of the structure as its objects and the isomorphisms among them as its morphisms.

\begin{definition}\label{def:functors}
By a {\em functor from $\A$ to $\B$} we mean a functor from $\Iso{\A}$ to  $\Iso{\B}$, that is, a map $F$ that assigns to each copy $\Ahat$ in $\Iso{A}$ a structure $F(\Ahat)$ in $\Iso{\B}$, and assigns to each morphism $f\colon \Ahat \to \Atilde$ in $\Iso{\A}$ a morphism $F(f) \colon F(\Ahat) \to F(\Atilde)$ in $\Iso{\B}$ so that the  two properties below hold:
	\begin{enumerate}
		\item[(N1)]	$F(\id_{\Ahat}) = \id_{F(\Ahat)}$ for every $\Ahat \in \Iso{\A}$, and
		\item[(N2)] $F(f \circ g) = F(f) \circ F(g)$ for all morphisms $f,g$ in $\Iso{\A}$.
	\end{enumerate}
$F$ is $\Delta^0_\alpha$ (or $\bfDelta^0_\alpha$) if it is given by a pair of $\Delta^0_\alpha$ (resp. $\bfDelta^0_\alpha$) operators $2^\omega \to 2^\omega$. It is Borel if it is given by Borel operators, and Baire-measurable if it is given by Baire-measurable operators.
\end{definition}

Every interpretation $\I$ of a structure $\A$ in a structure $\B$ induces an functor, $F_{\I}$, from $\B$ to $\A$.
There is only one small technicality in the definition of $F_{\I}$, which has to do with making the domain of $F_{\I}(\Bhat)$ equal to $\om$.
Using the interpretation we can associate, to each copy $\Bhat$ of $\B$, a copy of $\A$ whose domain consists of the $\sim$-equivalence classes of $\Dom{\Bhat}{\A}\subseteq \om^{< \om}$;
Using a bijection $\tau^{\Bhat}$ between $\om$ and $\Dom{\Bhat}{\A} / \sim$ (defined in some canonical way using an effective bijection between $\om$ and $\om^{<\om}$, so that
we can compute $\tau^{\Bhat}$ from $\Dom{\Bhat}{\A}$ and $\sim$), we then define $F_{\I}(\Bhat)$ to be the pull-back of this structure through $\tau^{\Bhat}$. If $h$ is an isomorphism $\Bhat \to \Btilde$, we define $F_{\I}(h) \colon F_{\I}(\Bhat) \to F_{\I}(\Btilde)$ by $F_{\I}(h) = (\tau^{\Btilde})^{-1} \circ \tilde{h} \circ \tau^{\Bhat}$.

Our main theorem states that every functor from $\B$ to $\A$ is of the form $F_{\I}$ up to natural isomorphism.

\begin{definition}\label{def:effeq}
A functor $F\colon \Iso{\B} \rightarrow \Iso{\A}$ is {\em naturally isomorphic} (or just {\em isomorphic}) to a functor $G\colon \Iso{\B} \rightarrow \Iso{\A}$ if for every $\Btilde \in \Iso{\B}$, there is an isomorphism $\eta_{\Btilde} \colon F(\Btilde) \to G(\Btilde)$, such that the following diagram commutes for every $\Btilde, \Bhat \in \Iso{\B}$ and every morphism $h\colon \Btilde \to \Bhat$:
\[
\xymatrix{
F(\Btilde)\ar[d]_{F(h)}\ar[r]^{\eta_{\Btilde}} &      G(\Btilde)\ar[d]^{G(h)}   \\
F(\Bhat)\ar[r]_{\eta_{\Bhat}}    & G(\Bhat)
}\]
An isomorphism is Borel (or $\Delta^0_\alpha$, or $\bfDelta^0_\alpha$) if $\eta$ is given by a Borel (resp. $\Delta^0_\alpha$ or $\bfDelta^0_\alpha$) operator.
\end{definition}

The following is the key result of the paper and Section \ref{sec: construction} is dedicated to proving it.

\begin{theorem}\label{thm:main}
Let $\B$ and $\A$ be countable structures, possibly in different countable languages.
For each Baire-measurable functor $F\colon \Iso{\B} \to \Iso{\A}$ there is an infinitary interpretation $\I$ of $\A$ within $\B$, such that $F$ is naturally isomorphic to the functor $F_\I$ associated to $\I$.
Furthermore, if $F$ is $\Delta^0_\alpha$ in the lightface Borel hierarchy, then the interpretation can be taken to be $\Dec_\alpha$ and the isomorphism between $F$ and $F_\I$ can be taken to be $\Delta^0_\alpha$.
\end{theorem}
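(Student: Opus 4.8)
The plan is to reconstruct the interpretation from the functor by looking at how $F$ acts on finite pieces of information about a copy $\Bhat$ of $\B$. The guiding idea, inherited from the effective case in \cite{HTM3}, is that a copy $\Ahat = F(\Bhat)$ of $\A$ must, by Baire-measurability, be determined on each of its elements by a condition on $\Bhat$ that holds on a comeager set; using the homogeneity of $\Iso{\B}$ under $\Aut(\B)$ together with forcing over the atomic diagram of $\B$, this condition can be taken to be an $\L_{\om_1\om}$-formula about a finite tuple $\bar b$ from $\Bhat$. So the domain relation $\Dom{\B}{\A}$ will consist of those tuples $\bar b$ that ``force'' (in the sense of Cohen forcing with conditions = finite partial atomic diagrams of copies of $\B$) a particular element of $F(\Bhat)$ to take a particular value, and $\sim$ and the $R_i$ will be read off analogously from the forced atomic diagram of $F(\Bhat)$.

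First I would set up the forcing: conditions are finite partial injections $p$ describing an initial segment of the atomic diagram of a generic copy $\Bhat$ of $\B$ (equivalently, finite approximations to an element of $\Iso{\B}$), and I would verify the usual genericity facts — every generic filter yields a copy $\Bhat_G \in \Iso{\B}$, the forcing relation for $\L_{\om_1\om}$ (resp. $\Sii_\alpha$, $\Pii_\alpha$) formulas is itself definable at the appropriate level, and ``truth in $\Bhat_G$ = forced by some condition in $G$''. Because $\Aut(\B)$ acts on $\Iso{\B}$ with comeager orbits and $F$ is Baire-measurable, for each $n$ the value $F(\Bhat)\models P_i(\bar n)$ and the value of the operator on a given bit are continuous on a comeager set, hence decided by a condition; pushing the condition down to the orbit of the generic and using that a condition on the atomic diagram is coded by a finite tuple $\bar b$ together with its quantifier-free type, I get: there is a tuple $\bar b$ in $\Bhat$ and an $\L_{\om_1\om}$-formula $\varphi(\bar x)$ true of $\bar b$ in $\Bhat$ such that $\varphi$ forces the relevant fact about $F(\Bhat)$. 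The arity of $\bar b$ is not bounded — this is exactly where infinitary (as opposed to finitary) interpretations are needed, and why $\Dom{\B}{\A}\subseteq \B^{<\om}$ rather than $\B^k$.

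Next I would define, for each tuple $\bar b \in \B^{<\om}$ and each $k\in\om$, a formula saying ``$\bar b$ forces the $k$-th bit of (the atomic diagram of) $F(\Bhat)$ to be $1$'', and similarly for $0$; the pair of these over all $k$ gives a forced partial copy of $\A$, and $\Dom{\B}{\A}$ is the set of $\bar b$ forcing enough to pin down a single element of that copy, $\sim$ says two tuples force the same element, and $R_i$ says a tuple of elements is forced into $P_i^\A$. One checks these are $\L_{\om_1\om}$ (and $\Dec_\alpha$ when $F$ is $\Delta^0_\alpha$, since the $\Delta^0_\alpha$ forcing relation lives at that level of the hierarchy), that $\sim$ is an equivalence relation and the $R_i$ are invariant, and that the resulting quotient structure is isomorphic to $\A$ via the map $f^\B_\A$ sending $\bar b$ to the element of $F(\Bhat)$ it forces. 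Finally, to get $F \cong F_\I$: for a fixed generic-enough (in fact arbitrary, after the usual absoluteness argument) $\Bhat$, the copy $F_\I(\Bhat)$ is built from $\Dom{\Bhat}{\A}/\!\sim$, and the map sending a $\sim$-class of $\bar b$ to the element of $F(\Bhat)$ it forces is an isomorphism $F_\I(\Bhat)\to F(\Bhat)$; checking that this isomorphism is natural in $\Bhat$ amounts to checking it commutes with $\tilde h$ for isomorphisms $h\colon\Bhat\to\Btilde$, which follows because forcing is invariant under the isomorphism $h$ and $F(h)$ was defined from the generic data. The natural isomorphism is $\Delta^0_\alpha$ because it is computed from the $\Delta^0_\alpha$ forcing relation together with $\Bhat$.

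\textbf{Main obstacle.} The technical heart — and the step I expect to be most delicate — is controlling the quantifier complexity: showing that when $F$ is $\Delta^0_\alpha$ the forcing relation used to define $\Dom{\B}{\A}$, $\sim$, and the $R_i$ can be expressed by $\Dec_\alpha$ (not just $\L_{\om_1\om}$) formulas, and that the natural isomorphism stays $\Delta^0_\alpha$. This requires a careful analysis of the forcing relation for $\Sii_\alpha$/$\Pii_\alpha$ formulas and of how a $\Delta^0_\alpha$ operator's behavior is decided by conditions, keeping track of the level at which ``the operator outputs $1$'' becomes decidable along a comeager set; the bookkeeping of why a $\Pic_\alpha$ side and a $\Sic_\alpha$ side of a $\Dec_\alpha$ fact both survive the translation is where the real work lies. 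By contrast, the existence of \emph{some} $\L_{\om_1\om}$ interpretation (the Baire-measurable case) follows fairly directly from the invariance characterization of $\L_{\om_1\om}$-definability cited in the excerpt (\cite{Kueker68,Makkai69}) once the forcing framework is in place.
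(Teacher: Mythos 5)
Your high-level strategy---force over copies of $\B$, read the interpretation's formulas off the forcing relation, and use definability of forcing to track complexity---matches the paper's. But the proposal is missing the key construction, and the piece it is missing is precisely the one that uses the hypothesis that $F$ is a \emph{functor} (acts on isomorphisms) rather than merely a Baire-measurable map on objects.

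The difficulty is this: the domain of $F(\Bhat)$ is $\omega$, so there is no a priori sense in which a tuple $\bar b$ from $\B$ ``pins down'' or ``forces'' an element of $F(\Bhat)$. Forcing bits of the atomic diagram of $F(\Bhat)$, as you describe, only determines $F(\Bhat)$ as an abstract structure; it cannot tell you which index $k\in\omega$ of $F(\Bhat)$ should be coded by $\bar b$, and that correspondence is exactly what $\Dom{\B}{\A}$ and $\sim$ must define. To get it, one needs $F$'s action on isomorphisms, which requires \emph{product forcing}. The paper forces over $(\B^*)^2$ with two mutually generic $g_1,g_2$ and sets
\[
(\bar b, i) \in \Dom{\B}{\A} \iff (\bar b, \bar b) \forces_{(\B^*)^2} F(\B_{\dot g_1}, \dot g_2^{-1}\circ \dot g_1, \B_{\dot g_2})(i) = i,
\]
i.e.\ ``when two independent generic copies both extend $\bar b$, the isomorphism $F(g_2^{-1}\circ g_1)$ fixes $i$,'' and defines $(\bar b, i)\sim(\bar c, j)$ by forcing that isomorphism to send $i$ to $j$ over the condition $(\bar b,\bar c)$. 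The natural isomorphism $F(\Bhat)\to F_\I(\Bhat)$ is likewise pinned down by forcing $F(\Bhat,\dot g^{-1},\Bhat_{\dot g})(i)=j$. Your single-generic setup, forcing only atomic-diagram facts about $F(\Bhat)$, cannot express any of these statements; and your verification sketch never invokes the functor identities (that $F$ respects composition and inverses), which is exactly where functoriality is used critically---those identities are forced by the empty condition in the product forcing, and they yield symmetry and transitivity of $\sim$ and bijectivity of the induced map. Relatedly, I would push back on where you locate the main difficulty: once the forcing language is restricted so that the forcing relation is $\Dec_\alpha$-definable in $\B$ (a clean induction, the paper's definability-of-forcing lemma), the complexity claim is essentially automatic. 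The real delicacy is the definition of $\Dom{\B}{\A}$ and $\sim$ that your sketch elides.
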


We also get a similar way of moving between bi-interpretations and a category-theoretic equivalent. For bi-interpretations, we must consider adjoint equivalences of categories.

\begin{definition}\label{def: adj equiv cat}
An adjoint equivalence of categories consists of two functors, from one category to the other and back, such that their compositions are both naturally isomorphic to the identity functors, and furthermore, these two natural isomorphisms are mapped to each other via these two functors. 
More formally, functors $F \colon \Iso{\B} \to \Iso{\A}$ and $G \colon \Iso{\A} \to \Iso{B}$, together with families of isomorphisms $\epsilon_{\Atilde} \colon \Atilde \to F(G(\Atilde))$ and $\eta_{\Btilde} \colon \Btilde \to G(F(\Btilde))$ for $\Atilde\in \Iso{\A}$ and $\Btilde\in\Iso{\B}$, form an {\em adjoint equivalence of categories} if
\[ F(\eta_{\Bhat}) = \epsilon_{F(\Bhat)} \text{ and } G(\epsilon_{\Ahat}) = \eta_{G(\Ahat)}.\]
An adjoint equivalence of categories is Borel if $F$, $G$, $\eta$, and $\epsilon$ are Borel operators.
\end{definition}

For bi-interpretations, both directions---producing an equivalence of categories from a bi-interpretation, and vice versa---are non-trivial.

\begin{theorem}\label{thm:bi to eq}
Let $\B$ and $\A$ be countable structures.
For every infinitary bi-interpretation $(\I,\J)$ of $\A$ and $\B$, $F_\I$ and $F_\J$ form a Borel adjoint equivalence of categories of $\Iso{\B}$ and $\Iso{\A}$. Furthermore, complexities are maintained.
\end{theorem}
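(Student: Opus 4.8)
The plan is to start from an infinitary bi-interpretation $(\I,\J)$, where $\I$ interprets $\A$ in $\B$ and $\J$ interprets $\B$ in $\A$, and to build the required natural isomorphisms directly out of the data of Definition \ref{defn:inf-biinterpretable}. Recall that an interpretation already gives us functors $F_\I\colon \Iso{\B}\to\Iso{\A}$ and $F_\J\colon\Iso{\A}\to\Iso{\B}$, so the content of the theorem is (i) producing families of isomorphisms $\epsilon_{\Atilde}\colon \Atilde\to F_\I(F_\J(\Atilde))$ and $\eta_{\Btilde}\colon\Btilde\to F_\J(F_\I(\Btilde))$; (ii) verifying naturality, i.e.\ that these commute with the functor actions on morphisms; (iii) verifying the adjoint-equivalence (triangle) identities $F_\I(\eta_{\Bhat})=\epsilon_{F_\I(\Bhat)}$ and $F_\J(\epsilon_{\Ahat})=\eta_{F_\J(\Ahat)}$; and (iv) checking that everything is Borel, with the complexity bookkeeping at the end.

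For step (i), fix $\Atilde\in\Iso{\A}$. The composite $f^\A_\B\circ\tilde f^\B_\A$ is, by the bi-interpretation hypothesis, an $\L_{\om_1\om}$-definable map from $\Do_\B^{(\Do_\A^\B)}$ onto $\B$ — but applied in the copy $\Atilde$ this becomes a concrete isomorphism from $\Atilde$ onto the copy of $\A$ that sits inside the copy of $\B$ that $\J$ builds inside $\Atilde$. Unwinding $F_\I(F_\J(\Atilde))$ through the bijections $\tau^{\Atilde}$ and $\tau^{F_\J(\Atilde)}$ that are used to force domains to be $\omega$, the map $\epsilon_{\Atilde}$ is exactly this $\L_{\om_1\om}$-definable composite, conjugated by the relevant $\tau$'s; since the $\tau$'s are computed uniformly (effectively, from the domain and $\sim$ relations) and the composite is $\L_{\om_1\om}$-definable, $\epsilon$ is a Borel operator on $\Iso{\A}$. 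Define $\eta_{\Btilde}$ symmetrically from $f^\B_\A\circ\tilde f^\A_\B$. Here the key point — already flagged in the remark before Theorem \ref{thm:inf-bi-iso} — is that the definable isomorphism genuinely factors as $f^\A_\B\circ\tilde f^\B_\A$ rather than being an arbitrary definable map; that factorization is what lets us identify the composite as landing precisely in $F_\I(F_\J(\Atilde))$, interpreted tuple by tuple.

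Steps (ii) and (iii) are then diagram chases. For naturality of $\eta$: given $h\colon\Btilde\to\Bhat$ in $\Iso{\B}$, both $F_\J(F_\I(h))\circ\eta_{\Btilde}$ and $\eta_{\Bhat}\circ h$ are isomorphisms $\Btilde\to F_\J(F_\I(\Bhat))$; because $\eta$ is built from a map that is $\L_{\om_1\om}$-\emph{definable in $\B$} (hence automorphism-invariant, via Kueker--Makkai), it is preserved by isomorphisms, which is exactly the commutativity needed. For the triangle identities, one unwinds $F_\I(\eta_{\Bhat})$ and $\epsilon_{F_\I(\Bhat)}$: both are isomorphisms $F_\I(\Bhat)\to F_\I(F_\J(F_\I(\Bhat)))$, and each is obtained by applying (the $\I$-interpreted image of) the definable composite attached to $\B$, respectively by applying the definable composite attached to $\A$ inside the copy $F_\I(\Bhat)$. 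These agree because the two composites in Definition \ref{defn:inf-biinterpretable} are the "mirror images'' of one another — applying $\tilde f^\B_\A$ to the $\B$-side composite and reading it inside the $\A$-copy produces the $\A$-side composite; this is the categorical shadow of the symmetry of the bi-interpretation, and I expect it to be the main obstacle, since it requires carefully tracking how $\tilde f^\A_\B$, $\tilde f^\B_\A$, and the domain-coding bijections $\tau$ interact across three nested layers without the notation collapsing.

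Finally, for the complexity claim: each $\tau^{\widehat{\phantom{x}}}$ is computable from the domain and $\sim$-relation of the relevant interpretation, the interpretations' relations are $\Dec_\alpha$ (or $\bfDelta^0_\alpha$) by hypothesis, and the two composite isomorphisms are given by formulas of the same complexity, so $F_\I$, $F_\J$, $\epsilon$, $\eta$ are all computed by $\Delta^0_\alpha$ (resp.\ $\bfDelta^0_\alpha$) operators; the finitely many applications, compositions, and effective re-codings needed do not raise the level, which gives "complexities are maintained.''
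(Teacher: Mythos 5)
Your plan matches the paper's own route: conjugate the two $\L_{\om_1\om}$-definable composite isomorphisms from Definition \ref{defn:inf-biinterpretable} by the domain-coding bijections $\tau$ and $\rho$ to obtain $\epsilon$ and $\eta$, get naturality from the fact that $\L_{\om_1\om}$-definable maps are automorphism-invariant, and then verify the two triangle identities. One slip along the way: you assign the $\B$-definable composite $f^\A_\B\circ\tilde{f}^\B_\A$ to $\epsilon_{\Atilde}$, but that map is given by a formula in the language of $\B$ and has codomain $\B$, so it cannot be ``applied in the copy $\Atilde$''; since $\epsilon_{\Atilde}$ connects two copies of $\A$, it must come from the $\A$-definable composite $f^\B_\A\circ\tilde{f}^\A_\B$, and symmetrically $\eta$ must come from $f^\A_\B\circ\tilde{f}^\B_\A$. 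This is how the paper sets up $\psi$ and $\varphi$.

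The genuine gap is the one you flag yourself: the triangle identity $F_\I(\eta_{\Bhat})=\epsilon_{F_\I(\Bhat)}$ (and its twin). Your ``mirror image'' statement is not among the hypotheses: Definition \ref{defn:inf-biinterpretable} says only that each of the two composites is separately $\L_{\om_1\om}$-definable, with no stated relation between the two formulas, and in general an equivalence of categories assembled from arbitrary natural isomorphisms need not be adjoint. The actual content of the paper's proof at this point is Claim 1, which identifies $\tilde{\varphi}^{\B}\circ\tilde{\tilde{\tau}}^{\B}$ with $\tau^{\B}\circ\psi^{F(\B)}$ on the fixed copy $\B$. Its proof is not a diagram chase: one chooses an isomorphism $h\colon\A\to F(\B)$, observes that $\alpha=f^\B_\A\circ\tau^{\B}\circ h$ is an \emph{automorphism} of $\A$, and then exploits the automorphism-invariance of the $\L_{\om_1\om}$-definable map $f^\B_\A\circ\tilde{f}^\A_\B$ to push $\alpha$ through both sides; a second claim then transports the identity to an arbitrary copy $\Btilde$. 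Without this (or an equivalent) idea, the adjointness does not follow from what you have written. The remaining parts of your proposal---naturality via automorphism-invariance, and the complexity bookkeeping via the uniform computability of the $\tau$'s and the $\Dec_\alpha$ bounds on the defining formulas---are sound and match the paper.
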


\begin{theorem}\label{thm:eq to bi}
Let $\B$ and $\A$ be countable structures.
For every Borel adjoint equivalence of categories $(F,G)$ between $\Iso{\B}$ and $\Iso{\A}$ there is an infinitary bi-interpretation $(\I,\J)$ between $\A$ and $\B$, such that $F$ and $G$ are naturally isomorphic to the functors $F_\I$ and $F_\J$ associated to $\I$ and $\J$ respectively. Furthermore, complexities are maintained.
\end{theorem}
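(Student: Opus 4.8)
\textbf{Proof proposal for Theorem \ref{thm:eq to bi}.}

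The plan is to start from a Borel adjoint equivalence $(F,G,\epsilon,\eta)$ between $\Iso{\B}$ and $\Iso{\A}$ and first apply Theorem \ref{thm:main} to each of $F$ and $G$ separately. This produces an infinitary interpretation $\I$ of $\A$ in $\B$ with $F\cong F_\I$, and an infinitary interpretation $\J$ of $\B$ in $\A$ with $G\cong F_\J$, together with Borel natural isomorphisms $\mu\colon F\Rightarrow F_\I$ and $\nu\colon G\Rightarrow F_\J$; if $(F,G)$ is $\bfDelta^0_\alpha$ then $\I,\J$ can be taken $\Dec_\alpha$ and $\mu,\nu$ can be taken $\Delta^0_\alpha$, which will handle the ``complexities are maintained'' clause once the rest goes through at the right level. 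So $F\cong F_\I$ and $G\cong F_\J$ by construction; the real content is to show that $(\I,\J)$ can be arranged to be an infinitary \emph{bi}-interpretation, i.e.\ that the two composite isomorphisms $f^\A_\B\circ\tilde f^\B_\A$ and $f^\B_\A\circ\tilde f^\A_\B$ are $\L_{\om_1\om}$-definable in $\B$ and $\A$ respectively.

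The key step is to transport the adjoint-equivalence data $\epsilon,\eta$ through $\mu,\nu$ to get an adjoint equivalence realized by $F_\I,F_\J$ on the nose. Concretely, define $\eta'_{\Bhat}\colon \Bhat\to F_\J(F_\I(\Bhat))$ by conjugating $\eta_{\Bhat}\colon \Bhat\to G(F(\Bhat))$ by the appropriate components of $\mu$ and $\nu$ — that is, $\eta'_{\Bhat} = F_\J(\mu_{\Bhat})\circ\nu_{F(\Bhat)}\circ\eta_{\Bhat}$, and symmetrically $\epsilon'_{\Ahat} = F_\I(\nu_{\Ahat})\circ\mu_{G(\Ahat)}\circ\epsilon_{\Ahat}$. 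Naturality of $\mu$ and $\nu$ shows that $(\F_\I,F_\J,\epsilon',\eta')$ is again an adjoint equivalence (the triangle identities $F_\I(\eta'_{\Bhat})=\epsilon'_{F_\I(\Bhat)}$ and $F_\J(\epsilon'_{\Ahat})=\eta'_{F_\J(\Ahat)}$ follow from the corresponding identities for $(F,G,\epsilon,\eta)$ together with the interchange laws $\mu_{G(\Ahat)}$ vs $F(\nu_{\Ahat})$ etc.; this is a diagram chase). Moreover $\eta',\epsilon'$ are Borel (resp.\ $\Delta^0_\alpha$) since they are built from Borel (resp.\ $\Delta^0_\alpha$) ingredients by composition.

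Now I would exploit the special shape of functors coming from interpretations. By definition $F_\J(F_\I(\Bhat))$ is, up to the bookkeeping bijection $\tau$, the structure $\Do_\B^{(\Do_\A^\B)}/\!\sim$ computed inside $\Bhat$, and its isomorphism to $\Bhat$ provided by the interpretation data is precisely $f^\B_\B := f^\A_\B\circ\tilde f^\B_\A$ — which is exactly the composite whose definability we must prove. The point is that $\eta'_{\Bhat}$ is \emph{some} Borel family of isomorphisms $\Bhat\to F_\J(F_\I(\Bhat))$ commuting with all $\Bhat$-isomorphisms, i.e.\ a Borel natural isomorphism from the identity functor on $\Iso{\B}$ to $F_\J\circ F_\I$. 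Evaluating at the fixed copy $\B$ and using that $\eta'$ is natural, $\eta'_{\B}$ must be $\Aut(\B)$-equivariant as a map $\B\to \Do_\B^{(\Do_\A^\B)}/\!\sim$, and a Borel (Baire-measurable) $\Aut(\B)$-equivariant map between copies is automatically $\L_{\om_1\om}$-definable by the Kueker–Makkai characterization already invoked in the paper (invariance under automorphisms $\Leftrightarrow$ $\L_{\om_1\om}$-definability for countable structures); the complexity bookkeeping here is the same $\Delta^0_\alpha$-to-$\Dec_\alpha$ passage used in Theorem \ref{thm:main}. Hence $f^\B_\B=f^\A_\B\circ\tilde f^\B_\A$ agrees with the definable map $\eta'_{\B}$, so it is $\L_{\om_1\om}$-definable in $\B$; symmetrically $f^\B_\A\circ\tilde f^\A_\B=\epsilon'_{\A}$ is $\L_{\om_1\om}$-definable in $\A$. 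Therefore $(\I,\J)$ is an infinitary bi-interpretation with $F\cong F_\I$ and $G\cong F_\J$, and the complexities are maintained.

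\textbf{Main obstacle.} The delicate point is the compatibility: Theorem \ref{thm:main} applied twice gives $\I$ and $\J$ with no a priori relationship between them, and a bi-interpretation requires the two interpretations to be genuine inverses with definable round-trips. I expect the heart of the argument to be the diagram chase showing that conjugating $(\epsilon,\eta)$ by $(\mu,\nu)$ yields an adjoint equivalence for $(F_\I,F_\J)$ — one must be careful that the two triangle identities are preserved, since adjointness (as opposed to mere equivalence) is exactly what forces the round-trip maps to be $f^\A_\B\circ\tilde f^\B_\A$ rather than some other isomorphism, and hence is what makes the final definability conclusion land on the correct composite. A secondary technical point is checking that all of this descends through the $\tau$-bijections cleanly and that the lightface complexity counts are uniform, but that is routine in the style of Section \ref{sec: construction}.
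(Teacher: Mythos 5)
Your overall strategy mirrors the paper's: apply Theorem \ref{thm:main} to $F$ and $G$, transport the adjoint-equivalence data, and use naturality plus Kueker--Makkai to get definability of the round-trip isomorphism. The conjugation producing $(\eta',\epsilon')$ is sound, and the observation that a Borel natural transformation evaluated at $\B$ is $\Aut(\B)$-equivariant and therefore $\L_{\om_1\om}$-definable is the right engine (the paper obtains the same conclusion by chasing the commutative diagram of Remark \ref{rmk:comm} rather than citing equivariance abstractly).

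However, the assertion ``$f^\A_\B\circ\tilde f^\B_\A$ agrees with the definable map $\eta'_\B$'' is a genuine gap, not a routine check. The compositions whose definability must be shown depend on the \emph{choice} of the maps $f^\B_\A$ and $f^\A_\B$, which are part of the bi-interpretation data and are not determined by the formulas; your proof never pins them down. Taking them directly from Theorem \ref{thm:main} does not work: the construction gives $\Fra^\B\colon F(\B)=\A\to\Do_\A^\B$, which is fine, but on the other side it gives $\Gra^{F(\B)}\colon G(\A)\to\Do_\B^\A$, whose inverse lands in $G(\A)$, which is only \emph{isomorphic} to $\B$, not equal to it. The paper corrects this by setting $g^\A_\B=\Gra^{F(\B)}\circ\eta_\B$, so that $\eta_\B$ is wired into $f^\A_\B$ from the start. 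With $g^\B_\A=\Fra^\B$ and $g^\A_\B=\Gra^{F(\B)}\circ\eta_\B$, the first round trip is $\tilde{\Fra}^\B\circ\Gra^{F(\B)}\circ\eta_\B$ by inspection, and its definability is exactly what you (and the paper) establish. But the second round trip $\tilde g^\A_\B\circ g^\B_\A=\tilde{\Gra}^{F(\B)}\circ\tilde\eta_\B\circ\Fra^\B$ is \emph{not} the mirror image of the first, and showing it equals the definable map $\tilde{\Gra}^\A\circ\Fra^{G(\A)}\circ\epsilon_\A$ requires the triangle identity $F(\eta_\B)=\epsilon_{F(\B)}$ together with Remark \ref{rmk:comm}. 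You flag adjointness as the crucial ingredient (correctly), but writing ``symmetrically'' does not discharge it; that computation is where the adjoint structure is actually used, and without it the proof does not close.
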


%%%%%%%%%%%%%%%%%%%%%%%%%%%%%%%%%%%%%%%%%%%%%%%%%%%%%%%%%%%%%%%%%%%%%%%%%%%%%%%%%%%%%%%%%%%%%%%%%%%%%%%%%%%%%%%%%%%%%%%%%%%%%%%%%%%%%%%%%%%%%%%%%%%%%%%%%%%%%%%%%%%%%%%%%%%%%%%%%%%%%%%%%%%%%%%%%%%%%%%%%%%%%%%%%%%%%%%%%%%%%%%%%%%%%%
\section{Homomorphisms of automorphism groups}

Our main result, Theorem \ref{thm:main}, shows the connection between functors and interpretations.
In this section, we discuss the connection between homomorphisms of automorphism groups and functors, which we will then be able to connect to interpretations once we prove Theorem \ref{thm:main}.

\begin{theorem}\label{thm:homo-to-functor}
For every continuous homomorphism $H\colon \Aut(\B)\to\Aut(\A)$, there is a Borel functor $G\colon \Iso{\B} \to \Iso{\A}$ with $G(\B) = \A$ and whose restriction to $\Aut(\B)$ is $H$.
\end{theorem}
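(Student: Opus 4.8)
The plan is to build the functor $G$ directly from the homomorphism $H$ by using $H$ to transport automorphisms, and to handle arbitrary copies of $\B$ by choosing, for each copy, a canonical isomorphism back to the fixed copy $\B$. First I would fix the given copy $\B$ with domain $\om$, so that $\Aut(\B)\le S_\infty$ and $H\colon\Aut(\B)\to\Aut(\A)\le S_\infty$ is a continuous group homomorphism. For an arbitrary $\Bhat\in\Iso{\B}$, I want to define $G(\Bhat)$; the naive idea is to pick any isomorphism $p_{\Bhat}\colon\B\to\Bhat$ and push $\A$ forward, but this is not canonical because $p_{\Bhat}$ is only well-defined up to precomposition with an element of $\Aut(\B)$, and $H$ of that element is generally nontrivial. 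The fix is to build $G(\Bhat)$ as a quotient/limit: consider the set of pairs $(p,a)$ where $p\colon\B\to\Bhat$ is an isomorphism and $a\in A$, modulo the relation $(p\circ\sigma,\,a)\sim(p,\,H(\sigma)(a))$ for $\sigma\in\Aut(\B)$; the relations of $\A$ lift to this quotient since $H(\sigma)$ is an automorphism of $\A$. This quotient is in bijection with $A$ (choosing any single $p$ gives a bijection), carries a copy of $\A$, and I then pull it back along a canonical bijection to make its domain $\om$, exactly as in the definition of $F_\I$. When $\Bhat=\B$ I arrange that the identity $p=\id_\B$ is used, so that $G(\B)=\A$ on the nose.

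Next I would define $G$ on morphisms. Given $h\colon\Bhat\to\Btilde$ in $\Iso{\B}$, the map $(p,a)\mapsto(h\circ p,a)$ respects the equivalence relation on both sides, hence descends to an isomorphism $G(\Bhat)\to G(\Btilde)$; composing with the canonical bijections to $\om$ gives $G(h)$. Functoriality (N1) and (N2) are immediate from the definition since the construction on morphisms is literally "postcompose with $h$", and the equivalence relation is the same on every fiber. The key computation is that the restriction of $G$ to $\Aut(\B)$ recovers $H$: for $\sigma\in\Aut(\B)$, $G(\sigma)$ sends the class of $(\id_\B,a)$ to the class of $(\sigma,a)=(\id_\B\circ\sigma,a)$, which by the defining relation equals the class of $(\id_\B,H(\sigma)(a))$; so under the identification $G(\B)=\A$, $G(\sigma)=H(\sigma)$.

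The main obstacle is verifying that $G$ is \emph{Borel}, i.e.\ given by Borel operators $2^\om\to2^\om$ on atomic diagrams. Working out $G(\Bhat)$ requires, in effect, a Borel way to produce \emph{some} isomorphism $p_{\Bhat}\colon\B\to\Bhat$ from (the atomic diagram of) $\Bhat$; in general there is no Borel choice of such an isomorphism when $\Aut(\B)$ is large, so I cannot simply "pick" $p_{\Bhat}$. The resolution is that I do not need a canonical $p_{\Bhat}$: what is Borel is the \emph{space of all} such pairs together with the equivalence relation, and one can read off $G(\Bhat)$ as a Borel function of $\Bhat$ by, for instance, using the least (in a fixed effective enumeration of finite partial injections of $\om$) isomorphism-approximations — but more robustly, by appealing to continuity of $H$. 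Continuity of $H$ means $H(\sigma)\upharpoonright n$ depends only on $\sigma\upharpoonright m$ for some $m=m(n)$, so the quotient operation is computed by a Borel (indeed, fairly low-level) recursion: to determine finitely much of $G(\Bhat)$ one uses finitely much of $\Bhat$ to build enough partial isomorphisms $\B\to\Bhat$ and feeds them through the local behavior of $H$. Making this recursion precise, and checking it is well-defined and Borel on all of $\Iso{\B}$ (not just on a comeager set), is where the real work lies; the rest is bookkeeping with the canonical bijections $\om\leftrightarrow\om^{<\om}$ to keep domains equal to $\om$.
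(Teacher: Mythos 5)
Your plan is structurally the same as the paper's---reduce $G$ to $H$ by choosing, for each $\Bhat$, an isomorphism back to the fixed copy $\B$---but you stop exactly where the real input is needed, and along the way you assert something false. You claim that ``in general there is no Borel choice of such an isomorphism when $\Aut(\B)$ is large.'' This is not so: for a \emph{fixed} countable structure $\B$, there is always a Borel map $\Gamma$ assigning to each $\Bhat\in\Iso{\B}$ an isomorphism $\Gamma^{\Bhat}\colon\Bhat\to\B$, and one can take $\Gamma^{\B}=\id_{\B}$. This is precisely the ingredient the paper invokes: by Montalb\'an's theorem that a structure is uniformly $\Delta^0_\alpha$-relatively categorical on a cone, where $\alpha$ is its Scott rank, the operator witnessing this uniformity is the desired Borel $\Gamma$ (Borel relative to a fixed parameter, hence Borel in the boldface sense). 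Once you have $\Gamma$, everything collapses: your quotient construction is unnecessary, since you may simply put $G(\Bhat)=\A$ for every $\Bhat$, and $G(f)=H(\Gamma^{\Bti}\circ f\circ(\Gamma^{\Bhat})^{-1})$ for $f\colon\Bhat\to\Bti$. Functoriality, the identity of $G$ with $H$ on $\Aut(\B)$, and Borel-ness of $G$ (from Borel-ness of $\Gamma$ together with continuity of $H$) are then short computations.

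Your proposed workaround---reading off the quotient Borel-ly via continuity of $H$ and finite partial-isomorphism approximations---does not close the gap. Continuity of $H$ only lets you localize the value of $H$ \emph{once you already have} a total isomorphism $p\colon\B\to\Bhat$; it tells you nothing about which finite partial injections from $\B$ to $\Bhat$ extend to isomorphisms, and different non-extendible partial maps would feed inconsistent data into $H$. Deciding extendibility is exactly a back-and-forth/Scott-analysis question, whose complexity is governed by the Scott rank of $\B$, which is to say you would be re-deriving the relative-categoricity theorem. Since you explicitly defer this step (``where the real work lies''), the argument as written has a genuine hole, and the single missing ingredient is the Borel isomorphism selector $\Gamma$.
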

\begin{proof}
Let $\Gamma$ be a map that assigns, to each copy $\Bhat$ of $\B$, an isomorphism $\Gamma^\Bhat\colon \Bhat\to\B$ with $\Gamma^{\B} = \id_{\B}$.
Let us first show how will use $\Gamma$, and then show how we can choose it to be Borel.

Using $\Gamma$ and $H$ we define $G$ as follows.
First, the action of the functor on the copies of $\B$ is trivial: For every copy $\Bhat$ of $\B$, we let $G(\Bhat)= \A$.
The action of $G$ on the isomorphisms is a bit more interesting: If $f\colon \Bhat\to\Bti$ is an isomorphism, then ${\Gamma^{\Bti}} \circ f \circ {\Gamma^\Bhat}^{-1}$ is an automorphism of $\B$, and we can define $G(f) = H({\Gamma^{\Bti}} \circ f \circ {\Gamma^\Bhat}^{-1})\in\Aut(\A)$.
It is not hard to check that $G$ is a functor. If $f \in \Aut(\B)$, then since $\Gamma^{\B} = \id_{\B}$, $G(f) = H(f)$.  Moreover, the continuity of $H$ and the fact that $\Gamma$ is Borel ensure that $G$ is Borel.

Let us now build $\Gamma$ in a Borel way.
Let $\alpha$ be the Scott rank of $\Bhat$ in the sense of \cite{MonScott}.
So, by \cite[Theorem 1.1]{MonScott}, $\B$ is uniformly $\Delta^0_\a$-relatively categorical on a cone, say the cone above $X$ .
Let $\Gamma$ be the operator witnessing this uniformity.
Note that we can choose $\Gamma^{\B} = \id_{\B}$.
\end{proof}

\begin{corollary}\label{cor:baire-to-borel}
Every Baire-measurable functor $F\colon \Iso{\B} \to \Iso{\A}$ is naturally isomorphic to a Borel one.
\end{corollary}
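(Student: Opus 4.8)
The plan is to reduce the statement to the already-proved Theorem~\ref{thm:homo-to-functor}. A Baire-measurable functor carries inside it a continuous homomorphism of automorphism groups, namely its action on $\Aut(\B)$ transported along a chosen isomorphism $F(\B)\to\A$; Theorem~\ref{thm:homo-to-functor} turns that homomorphism back into a Borel functor, and a Borel system of isomorphisms to $\B$ will let us exhibit a natural isomorphism between it and $F$.

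First I would fix once and for all an isomorphism $\phi\colon F(\B)\to\A$, which exists since $F(\B)\in\Iso{\A}$. By the functoriality axioms (N1) and (N2), $g\mapsto F(g)$ maps $\Aut(\B)$ into $\Aut(F(\B))$ and is a group homomorphism there, so I may define $H\colon\Aut(\B)\to\Aut(\A)$ by $H(g)=\phi\circ F(g)\circ\phi^{-1}$. Since $F$ is given by a Baire-measurable operator on morphisms, its restriction to $\Aut(\B)$ is Baire-measurable, and conjugation by the fixed $\phi$ is a homeomorphism $\Aut(F(\B))\to\Aut(\A)$; hence $H$ is a Baire-measurable homomorphism between Polish groups, and therefore continuous by \cite[Theorem~2.3.3]{GaoBook}. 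I then apply Theorem~\ref{thm:homo-to-functor} to $H$, obtaining a Borel functor $G\colon\Iso{\B}\to\Iso{\A}$ with $G(\B)=\A$ and $G\upto\Aut(\B)=H$; recall from that proof that $G$ is built from a Borel assignment $\Bhat\mapsto\Gamma^{\Bhat}$ of isomorphisms $\Gamma^{\Bhat}\colon\Bhat\to\B$ with $\Gamma^{\B}=\id_{\B}$, via $G(\Bhat)=\A$ on objects and $G(f)=H\big(\Gamma^{\Bti}\circ f\circ(\Gamma^{\Bhat})^{-1}\big)$ on a morphism $f\colon\Bhat\to\Bti$.

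Finally I would exhibit a natural isomorphism $\eta\colon F\Rightarrow G$ by setting
\[
\eta_{\Bhat}\ =\ \phi\circ F(\Gamma^{\Bhat})\ \colon\ F(\Bhat)\longrightarrow F(\B)\longrightarrow\A\ =\ G(\Bhat).
\]
Each $\eta_{\Bhat}$ is an isomorphism, so only the naturality square needs checking. For a morphism $f\colon\Bhat\to\Bti$, using (N2) we get $\eta_{\Bti}\circ F(f)=\phi\circ F(\Gamma^{\Bti})\circ F(f)=\phi\circ F(\Gamma^{\Bti}\circ f)$, while
\[
G(f)\circ\eta_{\Bhat}=H\big(\Gamma^{\Bti}\circ f\circ(\Gamma^{\Bhat})^{-1}\big)\circ\phi\circ F(\Gamma^{\Bhat})=\phi\circ F\big(\Gamma^{\Bti}\circ f\circ(\Gamma^{\Bhat})^{-1}\big)\circ F(\Gamma^{\Bhat})=\phi\circ F(\Gamma^{\Bti}\circ f),
\]
so the two composites coincide. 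Hence $F$ is naturally isomorphic to the Borel functor $G$. (In fact $\eta$ is itself Baire-measurable, being assembled from the Borel operator $\Gamma$ and the Baire-measurable $F$, though the statement does not require this.)

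The argument is essentially a formal diagram chase, and I expect no serious obstacle: all of the genuine descriptive-set-theoretic content—the Borel selection of isomorphisms $\Gamma^{\Bhat}\colon\Bhat\to\B$, obtained from the fact that a structure is uniformly relatively $\Delta^0_\alpha$-categorical on a cone when $\alpha$ is its Scott rank—has already been isolated inside the proof of Theorem~\ref{thm:homo-to-functor}. The one point requiring care is that a general functor need not satisfy $F(\B)=\A$, so $F$ cannot be fed directly into Theorem~\ref{thm:homo-to-functor}; this is precisely the role of the conjugating isomorphism $\phi$, which must then reappear in the formula defining $\eta$.
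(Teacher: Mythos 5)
Your proposal is correct and follows essentially the same route as the paper: restrict $F$ to $\Aut(\B)$, invoke continuity of Baire-measurable homomorphisms, apply Theorem~\ref{thm:homo-to-functor}, and use the Borel selection $\Gamma$ to produce the natural isomorphism $\eta_{\Bhat}=F(\Gamma^{\Bhat})$ (up to the conjugating $\phi$). The only cosmetic difference is that the paper avoids carrying $\phi$ by simply fixing the presentation so that $\A=F(\B)$, which is legitimate since the statement concerns the isomorphism class $\Iso{\A}$; your more explicit bookkeeping is equivalent.
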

\begin{proof}
Fix the presentations of $\A$ and $\B$, with $\A=F(\B)$.
When restricted to the automorphisms of $\B$, $F$ is a Baire-measurable homomorphism from $\Aut(\B)$ to $\Aut(\A)$.
As we mentioned earlier, such a homomorphism must be continuous (see \cite[Theorem 2.3.3]{GaoBook}).
We can then apply the previous theorem to get a Borel functor $G\colon \Iso{\B} \to \Iso{\A}$ which coincides with $F$ on $\Aut(\B)$.
Write $H=F\res\Aut(\B)$ as above.
Then $F$ and $G$ are isomorphic:
Let $\Gamma$ be a map that assigns, to each copy $\Bhat$ of $\B$, an isomorphism $\Gamma^\Bhat\colon \Bhat\to\B$, as in the previous theorem.
Given a copy $\Bhat$ of $\B$, let $\eta_{\Bhat} = F(\Gamma^{\Bhat}) \colon F(\Bhat)\to G(\Bhat)$ (recall that $G(\Bhat)=\A = F(\B)$). We claim that $\eta$ is a natural isomorphism between $F$ and $G$.
Given an isomorphism $f\colon \Bhat\to\Bti$, and using the fact that $G\upto \Aut(\B) = F\upto \Aut(\B)=H$, we have
\begin{align*}
G(f) \circ \eta_\Bhat &= H({\Gamma^{\Bti}} \circ f \circ {\Gamma^\Bhat}^{-1}) \circ F(\Gamma^{\Bhat}) \\
&= F({\Gamma^{\Bti}} \circ f \circ {\Gamma^\Bhat}^{-1}) \circ F(\Gamma^{\Bhat}) \\
&= F({\Gamma^{\Bti}}) \circ F(f)\\
&= \eta_{\Bti}\circ F(f).\qedhere
\end{align*}
\end{proof}

Theorem \ref{thm:homo-to-functor} together with our main Theorem \ref{thm:main} provides a proof of Theorem \ref{homo-to-interp}, that each homomorphism between automorphism groups is induced by an infinitary interpretation. 
We can then use this to define a measure of complexity for homomorphisms between automorphism groups.

\begin{definition}
Given a continuous homomorphism $H\colon \Aut(\B)\to\Aut(\A)$, we define the {\em rank} of $H$ to be the least $\a$ such that there is a $\bfDelta^0_\alpha$-functor from $\B$ to $\A$ coinciding with $H$ on $\Aut(\B)$, or equivalently, a $\Dei_\alpha$ interpretation $\I$ of $\A$ within $\B$ with $H=G_\I$ as in Definition \ref{defn:G_I}.
From the proof of Theorem \ref{thm:homo-to-functor} we get that the rank of $H$ is at most the Scott rank of $\B$.
\end{definition}

Note that the rank of a homomorphism depends on the underlying structures $\A$ and $\B$, and not just on their automorphism groups.
We will not develop this notion of rank any further in this paper, but it seems so natural that we think it deserves further study.

We now turn to the connection between isomorphisms of automorphism groups and adjoint equivalences of categories.

\begin{theorem}\label{thm:eq-to-iso}
Let $F \colon \Iso{\B} \to \Iso{\A}$, $G \colon \Iso{\A} \to \Iso{\B}$, $\eta$, and $\epsilon$ form a Borel adjoint equivalence of categories between $\Iso{\A}$ and $\Iso{\B}$ with $F(\B) = \A$. Then $F$, restricted to $\Aut(\B)$, gives an isomorphism between $\Aut(\B)$ and $\Aut(\A)$.
\end{theorem}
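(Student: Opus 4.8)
The plan is to exhibit an explicit two-sided inverse of the homomorphism $F\res\Aut(\B)$, built out of $G$, $\eta$ and $\epsilon$. First I would record that $F\res\Aut(\B)$ is indeed a group homomorphism into $\Aut(\A)$: since $\B$ has domain $\om$, the elements of $\Aut(\B)$ are exactly the morphisms $\B\to\B$ in $\Iso{\B}$, and because $F$ is a functor with $F(\B)=\A$, each such $f$ is sent to a morphism $F(f)\colon\A\to\A$, with identities and composition preserved by (N1) and (N2). As $F$ is Borel, its restriction to the Polish group $\Aut(\B)$ is Baire-measurable, hence continuous by \cite[Theorem 2.3.3]{GaoBook}.

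Next I would define the candidate inverse $H\colon\Aut(\A)\to\Aut(\B)$. Put $\B'=G(\A)=G(F(\B))$, a copy of $\B$, and note that $\eta_\B\colon\B\to G(F(\B))=\B'$ is an isomorphism (here $\eta$ and $\epsilon$ are the natural isomorphisms of the adjoint equivalence). For $g\in\Aut(\A)$ set
\[
H(g)=\eta_\B^{-1}\circ G(g)\circ\eta_\B\in\Aut(\B).
\]
By functoriality of $G$ this is a group homomorphism, and it is continuous since $G$ is Borel and $\eta_\B$ is a fixed isomorphism.

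I would then verify the two composition identities by diagram-chasing. For $f\in\Aut(\B)$, naturality of $\eta$ at the morphism $f\colon\B\to\B$ gives $\eta_\B\circ f=G(F(f))\circ\eta_\B$, whence $G(F(f))=\eta_\B\circ f\circ\eta_\B^{-1}$ and so $H(F(f))=\eta_\B^{-1}G(F(f))\eta_\B=f$; thus $H\circ(F\res\Aut(\B))=\id_{\Aut(\B)}$. For $g\in\Aut(\A)$, naturality of $\epsilon$ at $g\colon\A\to\A$ gives $\epsilon_\A\circ g=F(G(g))\circ\epsilon_\A$, so $F(G(g))=\epsilon_\A\circ g\circ\epsilon_\A^{-1}$ and hence
\[
F(H(g))=F(\eta_\B)^{-1}\circ\epsilon_\A\circ g\circ\epsilon_\A^{-1}\circ F(\eta_\B).
\]
Now the adjoint-equivalence identity $F(\eta_{\Bhat})=\epsilon_{F(\Bhat)}$ at $\Bhat=\B$ reads $F(\eta_\B)=\epsilon_{F(\B)}=\epsilon_\A$, so the right-hand side collapses to $g$; thus $(F\res\Aut(\B))\circ H=\id_{\Aut(\A)}$. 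Consequently $F\res\Aut(\B)$ is a group isomorphism with inverse $H$, and since both maps are continuous it is a topological isomorphism of $\Aut(\B)$ onto $\Aut(\A)$.

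The computation is entirely routine once $H$ is written down; the one place that uses more than the bare fact that $(F,G)$ is an equivalence is the final step, where the compatibility condition $F(\eta_{\Bhat})=\epsilon_{F(\Bhat)}$ of an \emph{adjoint} equivalence is precisely what makes $F(\eta_\B)$ cancel against $\epsilon_\A$. Without it one would only get that $(F\res\Aut(\B))\circ H$ is conjugation by the automorphism $\epsilon_\A^{-1}\circ F(\eta_\B)$ of $\A$ — still a bijection, but not canonically the identity — so keeping track of that identity is the main thing to be careful about.
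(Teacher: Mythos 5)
Your proof is correct and follows essentially the same route as the paper: you construct the same candidate inverse $H(g)=\eta_\B^{-1}\circ G(g)\circ\eta_\B$, verify $H\circ(F\res\Aut(\B))=\id$ by naturality of $\eta$, and verify $(F\res\Aut(\B))\circ H=\id$ by naturality of $\epsilon$ combined with the adjoint identity $F(\eta_\B)=\epsilon_{F(\B)}=\epsilon_\A$. The only difference is that you spell out the functoriality/continuity preliminaries and the diagram chases in more detail than the paper's terse two-line computation.
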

\begin{proof}
Let $H_1 \colon \Aut(\B) \to \Aut(\A)$ be defined by $H_1(h) = F(h)$, and let $H_2 \colon \Aut(\A) \to \Aut(\B)$ be defined by $H_2(g) = \eta_{\B}^{-1} \circ G(g) \circ \eta_{\B}$. Then
\[ H_1 \circ H_2(h) = F(\eta_{\B}^{-1}) \circ F(G(h)) \circ F(\eta_{\B}) = \epsilon_{\A}^{-1} \circ F(G (h)) \circ \epsilon_{\A} = h \]
and
\[ H_2 \circ H_1(g) = \eta_{\B}^{-1} \circ G(F(g)) \circ \eta_{\B} = g. \qedhere \]
\end{proof}

\begin{theorem}\label{thm:iso-to-functor}
For every continuous isomorphism $H\colon \Aut(\B)\to\Aut(\A)$, there is a Borel adjoint equivalence of categories $F \colon \Iso{\B} \to \Iso{\A}$ with $F(\B) = \A$ and whose restriction to $\Aut(\B)$ is $H$.
\end{theorem}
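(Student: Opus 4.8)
The plan is to mirror the construction in Theorem~\ref{thm:homo-to-functor}, but to be more careful so that we produce not just a functor but an adjoint equivalence of categories, and to do so symmetrically in $\A$ and $\B$. First I would fix presentations of $\A$ and $\B$ and Borel ``normalization'' maps $\Gamma_\B$ and $\Gamma_\A$, where $\Gamma_\B$ assigns to each copy $\Bhat$ of $\B$ an isomorphism $\Gamma_\B^{\Bhat}\colon \Bhat \to \B$ with $\Gamma_\B^{\B} = \id_\B$, and similarly $\Gamma_\A$; these exist and can be taken Borel by the Scott-rank / relative categoricity on a cone argument used in the proof of Theorem~\ref{thm:homo-to-functor} (relativize both arguments to a common oracle computing the cone bases for $\A$ and $\B$). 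Using $H$ and $\Gamma_\B$ I would define $F\colon \Iso{\B}\to\Iso{\A}$ exactly as in Theorem~\ref{thm:homo-to-functor}: $F(\Bhat) = \A$ for every $\Bhat$, and $F(f) = H(\Gamma_\A^{F(\Bti)}{}^{-1}\!\circ\cdots)$ — more precisely, since $F(\Bhat)=\A$ always, $F(f) = H(\Gamma_\B^{\Bti}\circ f\circ (\Gamma_\B^{\Bhat})^{-1})$ for $f\colon \Bhat\to\Bti$. Symmetrically, using $H^{-1}$ and $\Gamma_\A$, define $G\colon \Iso{\A}\to\Iso{\B}$ with $G(\Ahat)=\B$ and $G(g) = H^{-1}(\Gamma_\A^{\Ati}\circ g\circ (\Gamma_\A^{\Ahat})^{-1})$. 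Both are Borel functors by the continuity of $H$, $H^{-1}$ and the Borelness of $\Gamma_\A,\Gamma_\B$, and by construction $F\upto\Aut(\B) = H$ and $G\upto\Aut(\A)=H^{-1}$.

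Next I would exhibit the natural isomorphisms. For $\Btilde\in\Iso{\B}$ we have $G(F(\Btilde)) = G(\A) = \B$, so I need $\eta_{\Btilde}\colon \Btilde \to \B$; the obvious choice is $\eta_{\Btilde} = \Gamma_\B^{\Btilde}$. Similarly $F(G(\Atilde)) = F(\B) = \A$ and I set $\epsilon_{\Atilde} = \Gamma_\A^{\Atilde}\colon \Atilde \to \A$. Naturality of $\eta$: for $f\colon \Btilde\to\Bhat$ we must check $G(F(f))\circ\eta_{\Btilde} = \eta_{\Bhat}\circ f$, i.e.\ $H^{-1}(H(\Gamma_\B^{\Bhat}\circ f\circ(\Gamma_\B^{\Btilde})^{-1}))\circ \Gamma_\B^{\Btilde} = \Gamma_\B^{\Bhat}\circ f$, which is immediate since $H^{-1}H = \id$. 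Naturality of $\epsilon$ is symmetric. These are Borel since $\Gamma_\A,\Gamma_\B$ are. So far this gives an equivalence of categories; the remaining point, and the one that takes genuine care, is the \emph{adjoint} condition $F(\eta_{\Bhat}) = \epsilon_{F(\Bhat)}$ and $G(\epsilon_{\Ahat}) = \eta_{G(\Ahat)}$.

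For the adjoint condition: $F(\eta_{\Bhat})$ is a morphism $F(\Bhat)\to F(\B)$, i.e.\ $\A\to\A$, given by $H(\Gamma_\B^{\B}\circ\Gamma_\B^{\Bhat}\circ(\Gamma_\B^{\Bhat})^{-1}) = H(\id_\B) = \id_\A$ — wait, that is not quite it, since $\eta_{\Bhat}\colon \Bhat\to\B$ so $F(\eta_{\Bhat}) = H(\Gamma_\B^{\B}\circ\eta_{\Bhat}\circ(\Gamma_\B^{\Bhat})^{-1}) = H(\Gamma_\B^{\Bhat}\circ(\Gamma_\B^{\Bhat})^{-1}) = H(\id_\B) = \id_\A$. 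Meanwhile $\epsilon_{F(\Bhat)} = \epsilon_{\A} = \Gamma_\A^{\A} = \id_\A$. So they agree. The symmetric identity $G(\epsilon_{\Ahat}) = \eta_{G(\Ahat)}$ holds the same way: both sides are $\id_\B$. Thus $(F,G,\eta,\epsilon)$ is a Borel adjoint equivalence. The main obstacle — and the only place where something could go wrong — is the construction of the Borel normalizers $\Gamma_\A,\Gamma_\B$ with the exact base conditions $\Gamma_\B^{\B}=\id_\B$, $\Gamma_\A^{\A}=\id_\A$; I would handle this exactly as in the proof of Theorem~\ref{thm:homo-to-functor}, invoking \cite{MonScott} for uniform $\Delta^0_\alpha$ relative categoricity on a cone (taking the oracle large enough to handle both structures simultaneously), and noting that the witnessing operator can always be adjusted so that it returns the identity on the fixed presentation. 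Everything else is a routine check that composites of these operators collapse via $HH^{-1}=\id$ and $H^{-1}H=\id$, together with $\id$-on-the-base-point bookkeeping.
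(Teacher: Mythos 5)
Your construction is the same as the paper's: the Borel normalizers $\Gamma_\A,\Gamma_\B$ with base conditions $\Gamma^\B_\B=\id_\B$, $\Gamma^\A_\A=\id_\A$, the functors $F,G$ defined via $H$ and $H^{-1}$, and the natural isomorphisms $\eta=\Gamma_\B$, $\epsilon=\Gamma_\A$ together with the identical verifications of naturality and the adjoint identities. The argument is correct and matches the paper's proof step by step.
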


Note that the inverse of $H$ is also continuous \cite[Exercise 2.3.5]{GaoBook}.

\begin{proof}
Define $F$ as before: 
Let $\Gamma$ be a map that assigns, to each copy $\Bhat$ of $\B$, an isomorphism $\Gamma^\Bhat\colon \Bhat\to\B$ with $\Gamma^{\B} = \id_{\B}$, and
(overloading notation a bit) assigns to each copy $\Ahat$ of $\A$, an isomorphism $\Gamma^\Ahat\colon \Ahat\to\A$ with $\Gamma^{\A} = \id_{\A}$.
For every copy $\Bhat$ of $\B$, we let $F(\Bhat)= \A$, and if $h \colon \Bhat\to\Bti$ is an isomorphism, then $F(h) = H({\Gamma^{\Bti}} \circ h \circ {\Gamma^\Bhat}^{-1})$.
Define $G$ in a similar way: For every copy $\Ahat$ of $\B$, we let $G(\Ahat)= \B$, and if $h \colon \Ahat\to\Ati$ is an isomorphism, then $G(h) = H^{-1}({\Gamma^{\Ati}} \circ h \circ {\Gamma^\Ahat}^{-1})$.

First, we want to show that $F$ and $G$ are inverse equivalences, via the natural isomorphisms $\eta$ and $\epsilon$ defined by
\[ \eta_{\Bhat}= \Gamma^{\Bhat} \colon \Bhat \to \B = G(F(\Bhat)) \text{ and } \epsilon_{\Ahat} = \Gamma^{\Ahat} \colon \Ahat \to \A = F(G(\Ahat)).\]
We have, by definition, $G(F(\Bhat)) = \B$ and $F(G(\Ahat)) = \A$. Now let $h \colon \Bhat \to \Bti$ be an isomorphism. Then
\begin{align*}
G \circ F(h) &= G(H({\Gamma^{\Bti}} \circ h \circ {\Gamma^\Bhat}^{-1})) \\
& = H^{-1}({\Gamma^{\A}} \circ H({\Gamma^{\Bti}} \circ h \circ {\Gamma^\Bhat}^{-1}) \circ {\Gamma^\A}^{-1}) \\
&= {\Gamma^{\Bti}} \circ h \circ {\Gamma^\Bhat}^{-1}.
\end{align*}
(Above, recall that  $\Gamma^{\A}=\id_{\A}$.)
So
\[ 
G(F(h)) \circ \eta_\Bhat = \eta_\Bti \circ h.
\]
Similarly, for an isomorphism $h \colon \Ahat \to \Ati$
\[ F(G(h)) \circ \epsilon_\Ahat = \epsilon_\Ati \circ h.\]
Thus $F \circ G$ and $G \circ F$ are naturally isomorphic to the identity.

Note that
\[ 
F(\eta_{\Bhat}) = H({\Gamma^{\B}} \circ \Gamma^{\Bhat} \circ {\Gamma^\Bhat}^{-1}) = H(\id_{\B}) = \id_{\A} = \Gamma^{\A} = \eta_{F(\Bhat)}.
\]
Similarly, $G(\epsilon_{\Ahat}) = \epsilon_{F(\Ahat)}$. Thus $F$, $G$, $\eta$, and $\epsilon$ form an adjoint equivalence of categories.
\end{proof}

%%%%%%%%%%%%%%%%%%%%%%%%%%%%%%%%%%%%%%%%%%%%%%%%%%%%%%%%%%%%%%%%%%%%%%%%%%%%%%%%%%%%%%%%%%%%%%%%%%%%%%%%%%%%%%%%%%%%%%%%%%%%%%%%%%%%%%%%%%%%%%%%%%%%%%%%%%%%%%%%%%%%%%%%%%%%%%%%%%%%%%%%%%%%%%%%%%%%%%%%%%%%%%%%%%%%%%%%%%%%%%%%%%%%%%%%%%%%%%%%%%%%%%%%%%%%%%%%%%%%%%%%%%%%%%%%%%%%%%%%%%%%%%%%%%%%%%%%%%%%%%%%%%%%%%%%%%%%%%%%%%%%%%%%%%%%%%%%%%%%%%%%%%%%%%%%%%%%%%%%%%%%%%%%%%%%%%%%%%%%%%%%%%%%%%%%%%%%%%%%%%%%%%%%%%%%%%%%%%%%%%%%%%%%%%%%%%%%%%%%%%%%%%%%%%%%%%%%%%%%%%%%%%%%%%%%%%%%%%%%%%%%%%%%%%%%%%%%%%%%%%%%%%%%%%%%%%%%%%%%
\section{The construction}  \label{sec: construction}

In this section, we prove Theorem \ref{thm:main}. Let $\A$ and $\B$ be countable structures, and $F \colon \Iso{\B} \to \Iso{\A}$ a Baire-measurable functor. By Corollary \ref{cor:baire-to-borel}, up to natural isomorphism we may assume that $F$ is Borel.

The proof will involve a forcing: we will build multiple mutually generic structures and consider how the functor acts on the maps between these structures. The definability, in $\B$, of our forcing notion will give the formulas of our interpretation.

\subsection{The forcing notion}

Let $\B^{*}$ be the set of finite one-to-one tuples from $\B$. Since the domain of $\B$ is $\omega$, this is the same as finite tuples from $\omega$. We view $\B^{*}$ as a forcing notion, extension of tuples being extension of conditions. Thus, generics for these forcing notions are one-to-one functions  $\om\to\B$ respectively. A small amount of genericity guarantees these functions are onto and hence bijections.

Often in computable structure theory, forcing is used to build a single generic copy $\B_g$ of $\B$. Given a generic function $g \colon \omega \to \B$, $\B_g$ is the pullback of $\B$ along $g$. Here, we will want to build several generic copies and thus we will work with product forcing. Thus, given $\ell \in \omega$, we will define the product forcing $(\B^*)^\ell$. We write $p$ for a forcing condition in $(\B^*)^\ell$; $p$ is of the form $(\bbar_1,\ldots,\bbar_\ell)$.

We will want the forcing relation to be definable in $\B$. Often in computability theory, this is accomplished by taking as the forcing language $\L_{\omega_1 \omega}$ formulas about $\B$. Here, we will want to force statements of the form $F(\B_{g_1},g_1^{-1} \circ g_2,\B_{g_2})(i) = j$. Thus we will be required to force statements of the form $g_1^{-1} \circ g_2(i) = j$. This leads us to the definition of our forcing language.

\begin{definition}[Forcing language]
The finitary formulas in the forcing language for $(\B^*)^\ell$ are built up as follows:
\begin{itemize}
\item $\dot{g}_i^{-1} \circ \dot{g}_j (m) = n$ and $\dot{g}_i^{-1} \circ \dot{g}_j (m) \neq n$ where $m,n \in \omega$,
\item $R^{\B_{\dot{g}_i}}(a_1,\ldots,a_n)$ and $\neg R^{\B_{\dot{g}_i}}(a_1,\ldots,a_n)$ where $a_1,\ldots,a_n \in \omega$ and $R$ is a relation symbol in the language for $\B$,
\item finite conjunctions and finite disjunctions,
\item $\dot{g}_i(m) = n$ and $\dot{g}_i(m) \neq n$ where $m,n \in \omega$.
\end{itemize}
The forcing language $\L$ is built up from the finitary formulas by taking countable conjunctions and disjunctions. A formula is $X$-computable if the conjunctions and disjunctions are over $X$-c.e.\ sets of indices. By $\negat(\varphi)$, we mean the formal negation within the forcing language (flipping conjunctions and disjunctions, and negating the basic formulas).

We will also consider the restricted language $\L' \subset \L$ where we do not allow terms of the form $\dot{g}_i(m) = n$ or $\dot{g}_i(m) \neq n$.
\end{definition}

We use $\dot{g}_i$ as a formal symbol; the idea is that we will substitute a generic $g_i$ for $\dot{g}_i$. We will only get the definability of forcing within $\mathcal{B}$ for the restricted language $L'$; the whether or not the other sentences are forced depends on the presentation of $\mathcal{B}$.

We want to be able to express certain statements about $F$ by formulas in our forcing language. If we consider $F$ as a Borel functional, $F(\B_g)$ reads from its oracle statements about relations holding or not holding in $\B_g$ --- these are all in the forcing language --- and then computes its values using infinitary conjunctions and disjunctions. Thus, for $P$ a relation in the language of $\A$, we can express
\[
{F(\B_{g})}\models P(j_1,\ldots,j_{p(i)}) 
 \]
using infinitary formulas in the forcing language. Similarly, we can express
\[ F(\B_{g_1}, g_2^{-1} \circ g_1, \B_{g_2})(i)  = j.\]
If $F$ is $\Delta^0_\alpha$, then we can express these as $\Dec_\alpha$ formulas (i.e., as $\Sic_\alpha$ formulas and also as $\Pic_\alpha$ formulas). Similarly, if $F$ is $\bfDelta^0_\alpha$, then we can express these as $\Dei_\alpha$ formulas.
Using conjunctions and disjunctions of such statements, we can also express more complicated statements such as
\[ F(\B_{g_2}, g_1^{-1}\circ g_2, \B_{g_1}) = F(\B_{g_1}, g_2^{-1}\circ g_1, \B_{g_2})^{-1} \]
and
\[ F(\B_{g_2}, g_3^{-1}\circ g_2, \B_{g_3}) \circ F(\B_{g_1}, g_2^{-1}\circ g_1, \B_{g_2}) = F(\B_{g_1}, g_3^{-1}\circ g_1, \B_{g_3}) \]
in the forcing language. These formulas are all in the restricted language $\L'$. In the language $\L$, we can express 
\[ F(\Bhat, g_1^{-1}, \Bhat_{g_1})(i) = j.\]
If $F$ is $\Delta^0_\alpha$, then this is a $\Bhat$-computable $\Dec_\alpha$ formula.

\begin{definition}[Definition of Forcing]\label{def:forcing}
Let $p = (\bbar_1,\ldots,\bbar_\ell)$ be a forcing condition for $(\B^*)^\ell$. We define $p \forces_{(\B^*)^\ell} \varphi$ for $\varphi$ a sentence of the forcing language. We begin with the finitary formulas.
\begin{itemize}
\item if $\varphi \equiv \dot{g}_i^{-1} \circ \dot{g}_j (m) = n$, then $p \forces_{(\B^*)^\ell} \varphi$ if and only if $\bbar_i(n)$ and $\bbar_j(m)$ are defined and equal.
\item if $\varphi \equiv \dot{g}_i^{-1} \circ \dot{g}_j (m) \neq n$, then $p \forces_{(\B^*)^\ell} \varphi$ if and only if either:
	\begin{itemize}
		\item $\bbar_i(n)$ and $\bbar_j(m)$ are defined and distinct, or
		\item there is $m' \neq m$ such that $\bbar_i(n) = \bbar_j(m')$, or
		\item there is $n' \neq n$ such that $\bbar_i(n') = \bbar_j(m)$.
	\end{itemize}
\item if $\varphi \equiv R^{\B_{\dot{g}_i}}(a_1,\ldots,a_n)$, then $p \forces_{(\B^*)^\ell} \varphi$ if and only if $\bbar_i(a_1),\ldots,\bbar_i(a_n)$ are all defined and $\B \models R(\bbar_i(a_1),\ldots,\bbar_j(a_n))$.
\item if $\varphi \equiv \neg R^{\B_{\dot{g}_i}}(a_1,\ldots,a_n)$, then $p \forces_{(\B^*)^\ell} \varphi$ if and only if $\bbar_i(a_1),\ldots,\bbar_i(a_n)$ are all defined and $\B \models \neg R(\bbar_i(a_1),\ldots,\bbar_j(a_n))$.
\item if $\varphi \equiv \dot{g}_i(m) = n$, then $p \forces_{(\B^*)^\ell} \varphi$ if and only if $\bbar_i(m) = n$.
\item if $\varphi \equiv \dot{g}_i(m) \neq n$, then $p \forces_{(\B^*)^\ell} \varphi$ if and only if either $\bbar_i(m) \neq n$, or for some $m' \neq m$, $\bbar_i(m') = n$.
\item if $\varphi \equiv \psi_1 \vee \cdots \vee \psi_n$, then $p \forces_{(\B^*)^\ell} \varphi$ if and only if $p \forces \psi_i$ for some $i$.
\item if $\varphi \equiv \psi_1 \wedge \cdots \wedge \psi_n$, then $p \forces_{(\B^*)^\ell} \varphi$ if and only if $p \forces \psi_i$ for each $i$.
\end{itemize}
Now for infinitary formulas:
\begin{itemize}
\item if $\varphi\equiv \bigvee_n\psi_n$, then $p \forces_{(\B^*)^\ell} \bigvee_n \psi_n$ if and only if there is $n$ such that $p \forces_{(\B^*)^\ell} \psi_n$.
\item if $\varphi\equiv \bigwedge_n \psi_n$, then $p \forces_{(\B^*)^\ell} \bigwedge_n \psi_n$ if for all $n$ and $q \supseteq p$, there is $r \supseteq q$ such that $r \forces_{(\B^*)^\ell} \psi_n$.
\end{itemize}
\end{definition}

Given an injection $g \colon \omega \to \omega$, we can define a structure $\B_g$ using the pullback of $\B$ along $g$. That is, $R^{\B_g}(a_1,\ldots,a_n)$ if and only if $R^{\B}(g(a_1),\ldots,g(a_n))$. If $g$ is a bijection, then $\B_g$ is isomorphic to $\B$ via $g \colon \B_g \to \B$.

Given $\varphi$ a sentence in the forcing language for $(\B^*)^\ell$, and $g_1,\ldots,g_\ell$ functions $\omega \to \B$, we say that $\varphi[g_1,\ldots,g_\ell]$ \textit{holds} if $\varphi$ becomes true under the natural interpretation, substituting $g_i$ for $\dot{g}_i$.

\begin{lemma}\label{lem:ext}
If $p \forces_{(\B^*)^\ell} \varphi$, and $q \supseteq p$, then $q \forces_{(\B^*)^\ell} \varphi$.
\end{lemma}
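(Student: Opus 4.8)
The plan is to prove Lemma~\ref{lem:ext} by induction on the structure of the forcing sentence $\varphi$, following exactly the recursive clauses of Definition~\ref{def:forcing}. The statement is a pure monotonicity (persistence) claim: if a condition $p$ forces $\varphi$ and $q\supseteq p$ (meaning each tuple $\bbar_i$ in $p$ is extended by the corresponding tuple in $q$), then $q$ forces $\varphi$ as well.

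First I would handle the base cases, the finitary atomic formulas. For the atoms $\dot g_i(m)=n$, $R^{\B_{\dot g_i}}(\bar a)$, $\neg R^{\B_{\dot g_i}}(\bar a)$, and $\dot g_i^{-1}\circ\dot g_j(m)=n$, forcing is defined by asserting that certain finitely many values of the $\bbar_k$ are \emph{defined and equal to} (or satisfy the relevant relation in $\B$ for) specified elements; extending $\bbar_k$ to a longer one-to-one tuple preserves all values already defined, so these conditions persist immediately. The only atoms requiring a moment's thought are the "inequality" atoms $\dot g_i(m)\neq n$ and $\dot g_i^{-1}\circ\dot g_j(m)\neq n$, whose forcing clauses are disjunctions of several possibilities (e.g.\ $\bbar_i(n)$ and $\bbar_j(m)$ defined and distinct, or some collision $\bbar_i(n)=\bbar_j(m')$ with $m'\neq m$, etc.). Here one checks that each disjunct, once witnessed by $p$, remains witnessed by $q$: a defined-and-distinct pair stays defined and distinct, and any collision witnessed by finitely many entries of $p$ is still witnessed by the same entries of $q$ since $q$ extends $p$ and is still one-to-one.

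For the inductive step, finite conjunctions and disjunctions are immediate from the induction hypothesis (for $\bigvee$, the witnessing disjunct still works at $q$; for $\bigwedge$, each conjunct still works at $q$), and the infinitary disjunction $\bigvee_n\psi_n$ is handled the same way via the witnessing index $n$. The one clause that is structurally different is the infinitary conjunction $\bigwedge_n\psi_n$, whose forcing is defined by a density requirement: $p\forces\bigwedge_n\psi_n$ iff for every $n$ and every $r\supseteq p$ there is $s\supseteq r$ with $s\forces\psi_n$. If $q\supseteq p$, then for every $n$ and every $r\supseteq q$ we also have $r\supseteq p$, so by hypothesis there is $s\supseteq r$ with $s\forces\psi_n$; hence $q\forces\bigwedge_n\psi_n$. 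Note this last argument needs no appeal to the induction hypothesis on $\psi_n$ — it is purely a quantifier manipulation over the partial order — which is convenient since it keeps the induction clean.

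I do not expect a genuine obstacle here; this is a routine persistence lemma. If anything, the point deserving the most care is the set of inequality-atom clauses, where the forcing definition is a genuine disjunction over collision patterns rather than a single positive assertion, so one must verify that extending a condition cannot destroy a collision witness — which it cannot, precisely because conditions are one-to-one tuples and extension only adds new entries. I would write the proof as a short induction, spelling out the atomic inequality cases and the $\bigwedge_n$ case explicitly and dispatching the rest with "the remaining cases are immediate from the definitions and the induction hypothesis."
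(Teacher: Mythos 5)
Your proof is correct and follows essentially the same route as the paper: induction on the structure of $\varphi$, with the finitary cases dispatched as immediate, the infinitary disjunction handled via its witnessing index, and the infinitary conjunction via the density clause over $r\supseteq q\supseteq p$. Your observation that the $\bigwedge_n$ case is a pure quantifier manipulation not invoking the induction hypothesis is accurate; the paper simply compresses the finitary cases to ``clear.''
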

\begin{proof}
The proof is by induction on the complexity of $\varphi$. The lemma is clear for the finitary formulas. If $\varphi \equiv \bigvee_n\psi_n$ and $p \forces_{(\B^*)^\ell} \varphi$, then there is $n$ such that $p \forces_{(\B^*)^\ell} \psi_n$. By the induction hypothesis, $q \forces_{(\B^*)^\ell} \psi_n$. If $\varphi \equiv \bigwedge_n\psi_n$, then for all $n$ and $r \supseteq q$, $r \supseteq p$, and so there is $r' \supseteq r$ such that $r' \forces_{(\B^*)^\ell} \psi_n$. Thus $q \forces_{(\B^*)^\ell} \varphi$.
\end{proof}

\begin{lemma}\label{lem:decides}
For every $p$ and $\varphi$, there is $q \supseteq p$ such that $q$ decides $\varphi$.
\end{lemma}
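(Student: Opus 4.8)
The plan is to argue by induction on the complexity of $\varphi$, showing that the set of conditions deciding $\varphi$ (that is, either forcing $\varphi$ or forcing $\negat(\varphi)$) is dense. For the finitary base cases the statement is essentially immediate: for each of the atomic forms one checks directly that from any $p$ one can extend by specifying finitely much more of the relevant $\bbar_i$'s so as to fall into one of the defining clauses of Definition \ref{def:forcing}. For instance, for $\varphi \equiv \dot{g}_i^{-1}\circ\dot{g}_j(m)=n$, extend $p$ so that both $\bbar_i(n)$ and $\bbar_j(m)$ become defined; then either they are equal, in which case $p$ forces $\varphi$, or they are distinct, in which case $p$ forces $\negat(\varphi)$. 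The cases for $R^{\B_{\dot{g}_i}}$ and for $\dot{g}_i(m)=n$ are handled the same way, using that $\B$ itself decides all its atomic facts. Finite conjunctions and disjunctions are handled by finitely many applications of the inductive hypothesis (for a disjunction $\bigvee_{i<k}\psi_i$: successively try to extend to decide each $\psi_i$; if some extension forces a $\psi_i$ we are done, otherwise we end with a condition forcing every $\negat(\psi_i)$, hence forcing $\negat(\varphi)$; dually for conjunctions).

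The heart of the matter is the infinitary case, and the subtlety is that the definition of forcing for $\bigvee$ and $\bigwedge$ is asymmetric: $p\forces\bigvee_n\psi_n$ iff \emph{some} $p\forces\psi_n$, whereas $p\forces\bigwedge_n\psi_n$ is the "density" clause. So I would take $\varphi\equiv\bigvee_n\psi_n$ (the case $\varphi\equiv\bigwedge_n\psi_n$ is formally dual, via $\negat$). Fix $p$. Ask: is there some $q\supseteq p$ and some $n$ with $q\forces\psi_n$? If yes, that $q$ decides $\varphi$ (it forces $\varphi$). If no, then for every $q\supseteq p$ and every $n$, $q\nforces\psi_n$. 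I claim $p$ itself then forces $\negat(\varphi)\equiv\bigwedge_n\negat(\psi_n)$: to check the density clause, fix $n$ and $q\supseteq p$; by the inductive hypothesis applied to $\psi_n$ there is $r\supseteq q$ deciding $\psi_n$, and since $r\nforces\psi_n$ we must have $r\forces\negat(\psi_n)$ — exactly what the clause requires. Hence $p\forces\negat(\varphi)$, and $p$ decides $\varphi$.

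The main obstacle — really the only place any care is needed — is making sure the induction is set up so that "deciding $\varphi$" and "deciding $\negat(\varphi)$" are treated symmetrically, given that the forcing clauses themselves are not symmetric. The clean way is to prove the combined statement "for every $p$ and $\varphi$, there is $q\supseteq p$ with $q\forces\varphi$ or $q\forces\negat(\varphi)$" by induction on $\varphi$, and to use at each step that $\negat$ interchanges $\bigvee$ with $\bigwedge$ and preserves complexity, so the inductive hypothesis is available for $\negat(\psi_n)$ exactly when it is available for $\psi_n$. One should also record (it follows from Lemma \ref{lem:ext}, or is built into the argument above) that the two alternatives are mutually exclusive once we also know $p\forces\varphi$ and $q\supseteq p$ and $q\forces\negat(\varphi)$ cannot both happen — but for the present lemma we only need existence of a deciding extension, so this consistency remark is not strictly required here.
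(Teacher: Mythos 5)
Your proposal is correct and follows essentially the same argument as the paper: immediate decidability for finitary formulas, and for $\bigvee_n\psi_n$ either some extension forces a disjunct, or else one verifies the density clause for $\bigwedge_n\negat(\psi_n)$ directly from the inductive hypothesis, with the $\bigwedge$ case handled dually. The extra remarks (spelling out the base cases, making the $\negat$-symmetry of the induction explicit, noting that mutual exclusivity is not needed here) are harmless elaborations on what the paper leaves terse.
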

\begin{proof}
The proof is by induction. It is easy to see that the lemma holds when $\varphi$ is a finitary formula. If $\varphi\equiv \bigvee_n\psi_n$, then if there are $n$ and $q \supseteq p$ such that $q \forces_{(\B^*)^\ell} \psi_n$, then we are done. Otherwise, for all $n$ and $q \supseteq p$, $q \nforces_{(\B^*)^\ell} \psi_n$. By the induction hypothesis, there is $r \supseteq q$ such that $r$ decides $\psi_n$; by the previous lemma, $r \forces_{(\B^*)^\ell} \negat(\psi_n)$. Thus $p \forces_{(\B^*)^\ell} \negat(\varphi) \equiv \bigwedge_n \negat(\psi_n)$. The same argument works if $\varphi\equiv \bigwedge_n\psi_n$.
\end{proof}

\begin{lemma}\label{lem:neg}
It is not the case that $p \forces_{(\B^*)^\ell} \varphi$ and $p \forces_{(\B^*)^\ell} \negat(\varphi)$.
\end{lemma}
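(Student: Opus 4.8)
The plan is a straightforward induction on the complexity of $\varphi$, with the only genuine inputs being the one-to-one-ness of the tuples making up a condition (for the atomic cases) and the monotonicity lemma, Lemma \ref{lem:ext} (for the infinitary cases).

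First I would handle the finitary base cases. For the atomic formulas and their formal negations the two clauses of Definition \ref{def:forcing} are set up to be mutually exclusive once one remembers that a condition $p=(\bbar_1,\ldots,\bbar_\ell)$ consists of \emph{one-to-one} tuples. For instance, if $p\forces\dot g_i^{-1}\circ\dot g_j(m)=n$ then $\bbar_i(n)$ and $\bbar_j(m)$ are defined and equal; each of the three alternatives defining $p\forces\dot g_i^{-1}\circ\dot g_j(m)\neq n$ then forces either $\bbar_j(m)=\bbar_j(m')$ with $m'\neq m$ or $\bbar_i(n)=\bbar_i(n')$ with $n'\neq n$, contradicting injectivity (or directly contradicts equality). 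The cases $R^{\B_{\dot g_i}}(\abar)$, $\dot g_i(m)=n$, etc., are similar and even more immediate. For finitary $\vee$ and $\wedge$: if $\varphi\equiv\psi_1\vee\cdots\vee\psi_n$ and $p\forces\varphi$, pick $k$ with $p\forces\psi_k$; since $\negat(\varphi)\equiv\negat(\psi_1)\wedge\cdots\wedge\negat(\psi_n)$, $p\forces\negat(\varphi)$ gives $p\forces\negat(\psi_k)$, contradicting the induction hypothesis for $\psi_k$. The case $\varphi\equiv\psi_1\wedge\cdots\wedge\psi_n$ is dual.

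Next the infinitary cases. Suppose $\varphi\equiv\bigvee_n\psi_n$, so $\negat(\varphi)\equiv\bigwedge_n\negat(\psi_n)$. If $p\forces\varphi$, fix $k$ with $p\forces\psi_k$. If also $p\forces\negat(\varphi)=\bigwedge_n\negat(\psi_n)$, then (taking $q=p$ in the clause defining forcing of a countable conjunction) there is $r\supseteq p$ with $r\forces\negat(\psi_k)$; but $p\forces\psi_k$ and Lemma \ref{lem:ext} give $r\forces\psi_k$, so $r$ forces both $\psi_k$ and $\negat(\psi_k)$, contradicting the induction hypothesis. The case $\varphi\equiv\bigwedge_n\psi_n$, $\negat(\varphi)\equiv\bigvee_n\negat(\psi_n)$ is symmetric: from $p\forces\negat(\varphi)$ fix $k$ with $p\forces\negat(\psi_k)$, from $p\forces\bigwedge_n\psi_n$ (with $q=p$) get $r\supseteq p$ with $r\forces\psi_k$, and Lemma \ref{lem:ext} gives $r\forces\negat(\psi_k)$, again contradicting the induction hypothesis.

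I expect no real obstacle here; the one place to be slightly careful is the asymmetry between the definitions of forcing $\bigvee_n\psi_n$ and $\bigwedge_n\psi_n$, which is exactly why Lemma \ref{lem:ext} must be invoked to pass the witness $r$ back through the monotone side. One should also note that $\negat$ preserves the complexity measure used for the induction and that $\negat(\negat(\psi))\equiv\psi$, so that the induction hypothesis as stated (for formulas of lower complexity, applied to the pair $\psi_k,\negat(\psi_k)$) is legitimate.
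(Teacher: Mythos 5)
Your proof is correct and takes essentially the same approach as the paper's: induction on complexity, with the finitary cases resting on the injectivity of the tuples in a condition, and the infinitary cases using Lemma \ref{lem:ext} to pass the witness from the forced side to the extension and then apply the induction hypothesis. You spell out the two infinitary cases and the base cases more explicitly than the paper does, but the underlying argument is identical.
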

\begin{proof}
The lemma is easy to check for finitary formulas. If $\varphi\equiv \bigvee_n\psi_n$ or $\varphi \equiv \bigwedge_n \psi_n$, and $p \forces_{(\B^*)^\ell} \varphi$ and $p \forces_{(\B^*)^\ell} \negat(\varphi)$, then there is $n$ such that $p \forces_{(\B^*)^\ell} \psi_n$. Also, there is $q \supseteq p$ such that $q \forces_{(\B^*)^\ell} \negat(\psi_n)$. This contradicts the induction hypothesis.
\end{proof}

\begin{definition}
Let $X \subseteq \omega$. By an $X$-generic for $(\B^*)^\ell$ we mean a tuple $\boldsymbol{g} = (g_1,\ldots,g_\ell)$ of mutually $(X \oplus \B)$-hyperarithmetically generic functions $\omega \to \B$.
\end{definition}

It is clear that for any particular $X$-computable sentence of the forcing language, the forcing relation is $X \oplus \B$-hyperarithmetic. Thus, by Lemma \ref{lem:decides}, an $X$-generic $\boldsymbol{g}$ for $(\B^*)^\ell$ has the property that it forces every $X$-computable sentence or its negation. We also get that $(\boldsymbol{g}_1,\boldsymbol{g}_2)$ is $X$-generic for $(\B^*)^{\ell_1 + \ell_2}$ if and only if $\boldsymbol{g}_1$ is $X$-generic for $(\B^*)^{\ell_1}$ and $\boldsymbol{g}_2$ is $X \oplus \boldsymbol{g}_1$-generic for $(\B^*)^{\ell_2}$.

\begin{lemma}[Restriction]\label{lem:restriction}
If $\varphi$ is a computable sentence of the forcing language which does not involve $g_i$, then $(\bbar_1,\ldots,\bbar_\ell) \forces_{(\B^*)^\ell} \varphi$ if and only if $(\bbar_1,\ldots,\bbar_{i-1},\bbar_{i+1},\ldots,\bbar_\ell) \forces_{(\B^*)^{\ell-1}} \varphi$.
\end{lemma}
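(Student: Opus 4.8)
The statement to prove is the Restriction Lemma: for a computable sentence $\varphi$ of the forcing language that does not involve the symbol $\dot{g}_i$, we have $(\bbar_1,\ldots,\bbar_\ell) \forces_{(\B^*)^\ell} \varphi$ if and only if $(\bbar_1,\ldots,\bbar_{i-1},\bbar_{i+1},\ldots,\bbar_\ell) \forces_{(\B^*)^{\ell-1}} \varphi$. The plan is a straightforward induction on the complexity of $\varphi$, following exactly the pattern of Lemmas \ref{lem:ext}, \ref{lem:decides}, and \ref{lem:neg}, but with attention to how extension of conditions interacts with the coordinate being dropped.

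First I would handle the base cases. For the finitary atomic formulas $R^{\B_{\dot{g}_j}}(a_1,\ldots,a_n)$, its negation, $\dot{g}_j^{-1}\circ \dot{g}_k(m)=n$, and so on: since $\varphi$ does not involve $\dot{g}_i$, all indices $j,k$ appearing are $\neq i$, and Definition \ref{def:forcing} inspects only the tuples $\bbar_j$ with $j\neq i$ to decide whether $p$ forces $\varphi$. Hence deleting $\bbar_i$ from the condition changes nothing, and the equivalence is immediate. Finite conjunctions and disjunctions follow by the induction hypothesis applied to each conjunct/disjunct (none of which involves $\dot{g}_i$, since $\varphi$ does not). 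For the infinitary disjunction $\varphi \equiv \bigvee_n \psi_n$: $p \forces \varphi$ iff some $\psi_n$ is forced by $p$, and by induction this holds iff the restricted condition forces $\psi_n$; note that computability of $\varphi$ passes to the $\psi_n$ uniformly, and each $\psi_n$ omits $\dot{g}_i$ because $\varphi$ does.

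The only case requiring genuine care is the infinitary conjunction $\varphi \equiv \bigwedge_n \psi_n$, because its forcing clause quantifies over extensions $q \supseteq p$, and we must match up extensions of $(\bbar_1,\ldots,\bbar_\ell)$ with extensions of $(\bbar_1,\ldots,\bbar_{i-1},\bbar_{i+1},\ldots,\bbar_\ell)$. For the forward direction, suppose $(\bbar_1,\ldots,\bbar_\ell) \forces \bigwedge_n \psi_n$ and let $n$ and $q' \supseteq (\bbar_1,\ldots,\bbar_{i-1},\bbar_{i+1},\ldots,\bbar_\ell)$ be given; pad $q'$ with the unchanged $i$-th coordinate $\bbar_i$ to get a condition $q \supseteq (\bbar_1,\ldots,\bbar_\ell)$ in $(\B^*)^\ell$, obtain $r \supseteq q$ with $r \forces \psi_n$, and then by Lemma \ref{lem:ext} we may further extend $r$ if necessary; the restriction of $r$ to its non-$i$ coordinates extends $q'$ and forces $\psi_n$ by the induction hypothesis, so $q'$ is not an obstruction. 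For the reverse direction, symmetrically: given $q \supseteq (\bbar_1,\ldots,\bbar_\ell)$, restrict it to its non-$i$ coordinates, apply the hypothesis that the restricted condition forces $\bigwedge_n\psi_n$ to get $r'$ extending the restriction with $r' \forces \psi_n$, then re-insert the $i$-th coordinate of $q$ (which may itself have grown); by Lemma \ref{lem:ext} we may pad both coordinates so that $r'$ together with this $i$-th coordinate forms a legitimate extension $r$ of $q$, and $r \forces \psi_n$ by the induction hypothesis. The main obstacle, then, is purely bookkeeping: one must check that the padding operations produce valid forcing conditions (one-to-one tuples) and that the $\supseteq$ relations line up in both directions — there is no deep content, but the re-insertion of the dropped coordinate needs to be spelled out so that the quantifier $\forall q \supseteq p\,\exists r \supseteq q$ transfers cleanly between the two product forcings.
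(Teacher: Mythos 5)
Your proof is correct and matches the approach the paper intends: the paper itself dismisses this lemma with ``This is a simple induction argument,'' and what you wrote is exactly that induction carried out in detail, with the only nontrivial case being $\bigwedge_n \psi_n$, handled by inserting/deleting the $i$-th coordinate to translate extensions between $(\B^*)^\ell$ and $(\B^*)^{\ell-1}$. One small remark: the ``padding'' concern you raise at the end is a non-issue, since a condition in the product forcing $(\B^*)^\ell$ is just an $\ell$-tuple of independent one-to-one finite sequences with no compatibility constraint between coordinates, so inserting or deleting the $i$-th coordinate always yields a legitimate condition and the $\supseteq$ relations line up automatically. In the reverse direction you can simply keep the $i$-th coordinate of $q$ fixed when forming $r$ from $r'$; no further extension is needed.
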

\begin{proof}
This is a simple induction argument.
\end{proof}

\begin{lemma}[Forcing Lemma]
Let $\varphi$ be an $X$-computable sentence of the forcing language for $(\B^*)^\ell$.
\begin{enumerate}
\item For $X$-generic $\boldsymbol{g}$, $\varphi[\boldsymbol{g}]$ holds if and only if for some $p \subset \boldsymbol{g}$, $p \forces_{(\B^*)^\ell} \varphi$.
\item If $\varphi$ starts with a $\bigwedge$, then $p \forces_{(\B^*)^\ell} \varphi$ if and only if for every $X$-generic $\boldsymbol{g} \supset p$, $\varphi[\boldsymbol{g}]$ holds.
\end{enumerate}
\end{lemma}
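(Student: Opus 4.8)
The proof is a fairly standard "generic meets every dense set / forcing equals truth" argument, carried out by induction on the complexity of $\varphi$ in the forcing language, so the two clauses (1) and (2) have to be proved simultaneously. The base case is the finitary formulas: here $p \forces_{(\B^*)^\ell} \varphi$ is a condition on a finite tuple, and since any $X$-generic $\boldsymbol{g}$ is in particular $n$-generic for every $n$ (indeed hyperarithmetically generic), it meets the dense set of conditions deciding $\varphi$ (Lemma \ref{lem:decides}), and one checks directly from Definition \ref{def:forcing} that $p \forces \varphi$ with $p \subset \boldsymbol{g}$ is equivalent to $\varphi[\boldsymbol{g}]$ holding. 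Lemma \ref{lem:neg} rules out the possibility that $\boldsymbol{g}$ forces both $\varphi$ and $\negat(\varphi)$, so exactly one side holds.

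For the inductive step on (1), the disjunctive case $\varphi \equiv \bigvee_n \psi_n$ is immediate from the definition of forcing a disjunction together with the induction hypothesis applied to each $\psi_n$. The conjunctive case $\varphi \equiv \bigwedge_n \psi_n$ is where genericity does real work: if no $p \subset \boldsymbol{g}$ forces $\varphi$, then by Lemma \ref{lem:decides} and Lemma \ref{lem:ext} some $p \subset \boldsymbol{g}$ forces $\negat(\varphi) \equiv \bigvee_n \negat(\psi_n)$, hence forces some $\negat(\psi_n)$, and by the induction hypothesis $\psi_n[\boldsymbol{g}]$ fails, so $\varphi[\boldsymbol{g}]$ fails. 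Conversely, if some $p \subset \boldsymbol{g}$ forces $\bigwedge_n \psi_n$, then for each fixed $n$ the set of $r$ with $r \forces \psi_n$ is dense below $p$ (by the definition of forcing $\bigwedge$), this set is $X \oplus \B$-hyperarithmetic for an $X$-computable $\varphi$, so $\boldsymbol{g}$ meets it; thus some $r \subset \boldsymbol{g}$ forces $\psi_n$, and by the induction hypothesis $\psi_n[\boldsymbol{g}]$ holds for every $n$, i.e.\ $\varphi[\boldsymbol{g}]$ holds.

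For clause (2), suppose $\varphi \equiv \bigwedge_n \psi_n$. If $p \forces \varphi$, then for every $X$-generic $\boldsymbol{g} \supset p$ clause (1) gives $\varphi[\boldsymbol{g}]$. Conversely, suppose $p \nforces \varphi$; I would produce an $X$-generic extending $p$ on which $\varphi$ fails. By the definition of forcing $\bigwedge$, there are $n$ and $q \supseteq p$ such that no $r \supseteq q$ forces $\psi_n$; by Lemma \ref{lem:decides} and Lemma \ref{lem:ext}, $q \forces \negat(\psi_n)$. Build an $X$-generic $\boldsymbol{g} \supseteq q$ (possible since the conditions extending $q$ are cofinal and hyperarithmetic genericity is available); then $q \subset \boldsymbol{g}$ and $q \forces \negat(\psi_n)$, so by clause (1), $\negat(\psi_n)[\boldsymbol{g}]$ holds, hence $\psi_n[\boldsymbol{g}]$ fails and $\varphi[\boldsymbol{g}]$ fails, as required.

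**Main obstacle.** The delicate point is bookkeeping the level of genericity: one must confirm that for an $X$-computable sentence the various dense sets invoked (the set deciding $\varphi$, and, in the conjunctive case of (1), the set of $r$ forcing a fixed $\psi_n$ below $p$) are $(X \oplus \B)$-hyperarithmetic, so that the hyperarithmetic genericity of $\boldsymbol{g}$ actually suffices to meet them — this is exactly the remark preceding the lemma, that the forcing relation for an $X$-computable sentence is $X \oplus \B$-hyperarithmetic. The rest is routine induction, using Lemmas \ref{lem:ext}, \ref{lem:decides}, and \ref{lem:neg} in the expected places.
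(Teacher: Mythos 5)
Your proof takes essentially the same route as the paper's: a simultaneous induction on formula complexity, relying on Lemmas \ref{lem:ext}, \ref{lem:decides}, and \ref{lem:neg}, with genericity supplying density arguments. Two points deserve comment. First, in your argument for clause~(2), the step ``by Lemma~\ref{lem:decides} and Lemma~\ref{lem:ext}, $q \forces \negat(\psi_n)$'' is not correct as stated: from the hypothesis that no $r \supseteq q$ forces $\psi_n$, applying Lemma~\ref{lem:decides} gives some extension $q' \supseteq q$ that \emph{decides} $\psi_n$, hence $q' \forces \negat(\psi_n)$ --- but $q$ itself need not force $\negat(\psi_n)$. (For a concrete counterexample, take $\psi_n$ to be $\dot{g}_1(1)=5 \wedge \dot{g}_1(2)=5$ and $q=\emptyset$: no extension forces $\psi_n$ since $g_1$ is injective, yet $\emptyset$ forces neither disjunct of $\negat(\psi_n)$.) The repair is trivial --- build your generic over $q'$ rather than over $q$ --- but you should state it. You also argue this direction contrapositively, whereas the paper argues directly that if $\varphi[\boldsymbol{g}]$ holds for every generic $\boldsymbol{g}\supseteq p$ then no $q \supseteq p$ forces $\negat(\varphi)$, from which $p \forces \varphi$ follows by density of deciding conditions; both are fine.

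Second, in the $\bigwedge$-case of clause~(1), you appeal to the dense set $\{r : r \forces \psi_n\}$ ``below $p$.'' This works, but it silently uses that an $X$-generic extending $p$ meets every $(X\oplus\B)$-hyperarithmetic set that is dense merely below $p$ (by patching it with the conditions incompatible with $p$). The paper instead lets $\boldsymbol{g}$ meet the dense-everywhere set of conditions \emph{deciding} $\psi_n$, takes such a $q \subset \boldsymbol{g}$ with $q \supseteq p$, and then uses the definition of forcing $\bigwedge$ together with Lemma~\ref{lem:neg} to conclude $q \forces \psi_n$ rather than $q \forces \negat(\psi_n)$. This avoids the ``dense below $p$'' relativization entirely. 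Both routes are valid, and the bookkeeping you flag as the ``main obstacle'' is indeed the only place where care is needed.
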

\begin{proof}
For (1), first suppose that for some $p = (\bbar_1,\ldots,\bbar_\ell) \subseteq \boldsymbol{g} = (g_1,\ldots,g_\ell)$, $p \forces \varphi$. We argue by induction. For the finitary formulas, everything is simple:
\begin{itemize}
\item if $\varphi \equiv \dot{g}_i^{-1} \circ \dot{g}_j (m) = n$, then $\bbar_i(n) = \bbar_j(m)$ and so $g_i^{-1} \circ g_j (m) = n$.
\item if $\varphi \equiv \dot{g}_i^{-1} \circ \dot{g}_j (m) \neq n$, then either:
	\begin{itemize}
		\item $\bbar_i(n) \neq \bbar_j(m)$ and so $g_i^{-1} \circ g_j (m) \neq n$,
		\item there is $m' \neq m$ such that $\bbar_i(n) = \bbar_j(m')$, and so since $g_j$ is injective, $g_i^{-1} \circ g_j (m) \neq n$, or
		\item there is $n' \neq n$ such that $\bbar_i(n') = \bbar_j(m)$, and so since $g_i$ is injective, $g_i^{-1} \circ g_j (m) \neq n$.
	\end{itemize}
\item if $\varphi \equiv R^{\B_{\dot{g}_i}}(a_1,\ldots,a_n)$, then $\B \models R(\bbar_i(a_1),\ldots,\bbar_i(a_n))$ and so $\B_{g_i} \models R(a_1,\ldots,a_n)$.
\item if $\varphi \equiv \neg R^{\B_{\dot{g}_i}}(a_1,\ldots,a_n)$, then $\B \models \neg R(\bbar_i(a_1),\ldots,\bbar_i(a_n))$ and so $\B_{g_i} \models \neg R(a_1,\ldots,a_n)$.
\item if $\varphi \equiv \dot{g}_i(m) = n$, then $\bbar_i(m) = n$ and so $g_i(m) = n$.
\item if $\varphi \equiv \dot{g}_i(m) \neq n$, then either $\bbar_i(m) \neq n$, or for some $m \neq m'$, $\bbar_i(m') = n$; thus $g_i(m) \neq n$.
\item if $\varphi \equiv \psi_1 \vee \cdots \vee \psi_n$, then $p \forces_{(\B^*)^\ell} \psi_i$ for some $i$ and so $\psi_i[\boldsymbol{g}]$ holds for some $i$.
\item if $\varphi \equiv \psi_1 \wedge \cdots \wedge \psi_n$, then $p \forces_{(\B^*)^\ell} \psi_i$ for all $i$ and hence for all $i$, $\psi_i[\boldsymbol{g}]$ holds.
\end{itemize}
Now for infinitary formulas:
\begin{itemize}
\item if $\varphi\equiv \bigvee_n\psi_n$, then $p \forces_{(\B^*)^\ell} \psi_i$ for some $i$ and so $\psi_i[\boldsymbol{g}]$ holds for some $i$.
\item if $\varphi\equiv \bigwedge_n \psi_n$, then for all $n$ and $q \supseteq p$, there exists $r \supseteq q$ such that $r \forces_{(\B^*)^\ell} \psi_n$. Fix $n$. Since $\boldsymbol{g}$ is generic, there is $q \subset \boldsymbol{g}$ such that $q$ decides $\psi_n$. We may assume, by Lemma \ref{lem:ext}, that $q \supseteq p$. So there is $r \supseteq q$ such that $r \forces_{(\B^*)^\ell} \psi_n$. By Lemma \ref{lem:neg}, $q \forces_{(\B^*)^\ell} \psi_n$. By the induction hypothesis, $\psi_n[\boldsymbol{g}]$ holds. Since this was true for all $n$, $\varphi[\boldsymbol{g}]$ holds.
\end{itemize}

Now suppose that $\varphi[\boldsymbol{g}]$ holds. There is $p \subseteq \boldsymbol{g}$ such that $p$ decides $\varphi$. If $p \forces_{(\B^*)^\ell} \negat(\varphi)$, then $\negat(\varphi)[\boldsymbol{g}]$ holds. This is a contradiction. Hence $p \forces_{(\B^*)^\ell} \varphi$.

For (2), suppose that $p \forces_{(\B^*)^\ell} \varphi$. Let $\boldsymbol{g} \supseteq p$ be $X$-generic. By (1), $\varphi[\boldsymbol{g}]$ holds.

For the other direction, suppose that for all $X$-generic $\boldsymbol{g} \supseteq p$, $\varphi[\boldsymbol{g}]$ holds. Then for all $q \supseteq p$, $q \nforces_{(\B^*)^\ell} \negat(\varphi)$; if we did have $q \forces_{(\B^*)^\ell} \negat(\varphi)$, then for some $X$-generic $\boldsymbol{g} \supseteq q$, $\negat(\varphi)[\boldsymbol{g}]$ would hold, a contradiction. Now if $\varphi$ begins with $\bigwedge$, say $\varphi \equiv \bigwedge_n \psi_n$, then $\negat(\varphi) \equiv \bigvee_n \negat(\psi_n)$. So for all $n$ and $q \supseteq p$, $q \nforces_{(\B^*)^\ell} \negat(\psi_n)$. Now by Lemma \ref{lem:decides} there is $r \supseteq q$ such that $r$ decides $\psi_n$; we cannot have $r \forces_{(\B^*)^\ell} \negat(\psi_n)$ (since $r \supseteq p$) and so $r \forces_{(\B^*)^\ell} \psi_n$. Thus $p \forces_{(\B^*)^\ell} \varphi$.
\end{proof}

\begin{lemma}[Definability of Forcing]\label{lem:definability}
For $\alpha\geq 1$, given a $\Si_\alpha$ formula $\varphi$ in the restricted language $\L'$, the set $\{p\in (\B^*)^{\ell}: p\forces \varphi\}$ is $\Sic_\a$-definable in $\B$, and if $\varphi$ is $\Pi_\alpha$, $\{p\in (\B^*)^{\ell}: p\forces \varphi\}$ is $\Pic_\a$-definable. This also relativizes.
\end{lemma}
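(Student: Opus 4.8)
## Proof Proposal

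The plan is to prove this by induction on $\alpha$, tracking the syntactic complexity of the forcing relation through the inductive definition of forcing (Definition \ref{def:forcing}). The base case $\alpha = 1$ requires handling the finitary formulas together with one layer of infinitary disjunction; the inductive step handles the $\bigvee$ and $\bigwedge$ cases at level $\alpha$ using the hypothesis at lower levels.

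First I would treat the finitary formulas of $\L'$. For each of the basic formula types --- $\dot{g}_i^{-1}\circ\dot{g}_j(m) = n$, its negation, and $R^{\B_{\dot{g}_i}}(\abar)$ and $\neg R^{\B_{\dot{g}_i}}(\abar)$ --- the definition of $p\forces\varphi$ is a quantifier-free (for the equalities of entries of the tuples) or atomic (for $\B\models R(\cdots)$) condition on the tuple $p=(\bbar_1,\ldots,\bbar_\ell)$, possibly with a bounded existential quantifier over the finite domain of the $\bbar$'s in the ``$\neq$'' cases. Viewing $p$ as an element of $(\B^*)^\ell$ (a finite object coded by a natural number, with the interpretation of its entries in $\B$ given), these are all $\Sic_1$ (indeed $\Dec_1$, or even computable modulo the atomic diagram of $\B$) conditions; finite conjunctions and disjunctions preserve this. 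Note that the restriction to $\L'$ is exactly what lets us avoid the problematic clauses $\dot{g}_i(m)=n$, whose forcing depends on the presentation of $\B$ and is not definable.

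Next, for a general $\Si_\alpha$ formula $\varphi \equiv \bigvee_n \psi_n$ with each $\psi_n$ of complexity $\Pi_{\beta_n}$ for some $\beta_n < \alpha$ (or $\beta_n = \alpha$ if $\alpha$ is a successor and $\psi_n$ is a conjunction of lower-level formulas, in the usual normal-form convention): by definition $p\forces\varphi$ iff $\exists n\, (p\forces\psi_n)$, and by the induction hypothesis each $\{p : p\forces\psi_n\}$ is $\Pic_{\beta_n}$-definable, uniformly in $n$ (here I use that $\varphi$ being a $\Si_\alpha$ formula means the index set for the disjuncts is c.e., so the sequence of $\psi_n$ and hence of their defining formulas is computable). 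An effective existential quantifier over $n$ in front of a computable disjunction of $\Pic_{\beta_n}$ formulas is $\Sic_\alpha$. For $\varphi \equiv \bigwedge_n \psi_n$ of complexity $\Pi_\alpha$, the definition gives $p\forces\varphi$ iff $\forall q\supseteq p\,\forall n\,\exists r\supseteq q\,(r\forces\psi_n)$. The quantifiers $\forall q \supseteq p$ and $\exists r\supseteq q$ range over $(\B^*)^\ell$, which is a computable set (finite tuples from $\om = \mathrm{dom}(\B)$), so these are effective number quantifiers; combined with the induction hypothesis that $\{r : r\forces\psi_n\}$ is $\Sic_{\beta_n}$-definable uniformly in $n$, and using the Restriction Lemma (Lemma \ref{lem:restriction}) to keep the arities consistent, the whole thing collapses to a $\Pic_\alpha$ formula after the standard normal-form manipulations (a $\forall\exists$ block over numbers in front of a $\Sic_{<\alpha}$ matrix is $\Pic_\alpha$).

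The main obstacle I anticipate is bookkeeping the uniformity and the normal forms carefully: one must be precise that ``a $\Si_\alpha$ formula $\varphi$'' comes with a c.e.\ (computable relative to the oracle) index for its disjunction set so that the sequence $\{p : p\forces\psi_n\}_n$ of definitions is itself effectively given, and one must invoke the standard closure facts for $\Sic_\alpha$/$\Pic_\alpha$ (closure under effective infinite disjunction/conjunction, under number quantifiers of the appropriate kind, and the fact that a bounded-complexity formula can be padded up) --- these are the facts from \cite[Chapters 6, 7]{HolyBible}. There is also the mild point that at successor stages the normal form for $\Si_\alpha$ allows $\Pi_\beta$ for $\beta \le \alpha$ among the disjuncts, which is already absorbed by the induction. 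Finally, the ``this also relativizes'' clause is immediate: the entire argument refers to $\B$ only through its atomic diagram, so replacing ``computable'' by ``$X$-computable'' throughout and ``$\Sic_\alpha$'' by ``$\Sii_\alpha$ relative to $X$'' goes through verbatim, which also yields the boldface version needed for the $\bfDelta^0_\alpha$ case of Theorem \ref{thm:main}.
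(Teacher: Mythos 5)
Your proposal is correct and follows essentially the same inductive argument as the paper: finitary $\L'$-formulas reduce to atomic conditions on the tuple $p$, the disjunction case is immediate, and the conjunction case yields $\Pic_\alpha$ once the extra quantifiers over conditions (which are just tuples from $\B$) are absorbed. The paper's only cosmetic variation is that it first rewrites forcing of $\bigwedge_n\psi_n$ as ``no extension forces $\negat(\psi_n)$'' (via Lemmas \ref{lem:ext}, \ref{lem:decides}, \ref{lem:neg}) before reading off the complexity, whereas you work directly from the definition of forcing; your invocation of the Restriction Lemma is unnecessary here but harmless.
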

\begin{proof}
We argue by induction. For finitary formulas $\varphi$, it is easy to see from Definition \ref{def:forcing} that the set $\{p\in (\B^*)^{\ell}: p\forces_{(\B^*)^\ell} \varphi\}$ is definable in $\B$ by a finitary formula. The key is to note that the tuples $\bbar$ and $\cbar$ such that $\bbar(n) = \cbar(m)$, or the tuples $\bbar$ such that $\B \models R(\bbar(a_1),\ldots,\bbar(a_n))$, are definable in $\B$ by atomic formulas.

Now we consider infinitary formulas. If $\varphi\equiv \bigvee_n\psi_n$, then $p \forces_{(\B^*)^\ell} \bigvee_n \psi_n$ if and only if for some $n$, $p \forces_{(\B^*)^\ell} \psi_n$. Since, for each $n$, $p \forces_{(\B^*)^\ell} \psi_n$ is $\Pic_\beta$-definable in $\B$ for some $\beta < \alpha$, this is $\Sic_\a$-definable in $\B$.

If $\varphi\equiv \bigwedge_n \psi_n$, then note that by Lemmas \ref{lem:ext}, \ref{lem:decides}, and \ref{lem:neg}, $p \forces \bigwedge_n \psi_n$ if and only if for all $q \supseteq p$, $q \nforces_{(\B^*)^\ell} \negat(\psi_n)$; $q \nforces_{(\B^*)^\ell} \negat(\psi_n)$ is $\Sic_{\beta}$-definable in $\B$ for some $\beta < \alpha$, and so $p \forces_{(\B^*)^\ell} \bigwedge_n \psi_n$ is $\Pic_\a$-definable in $\B$.
\end{proof}

%%%%%%%%%%%%%%%%%%%%%%%%%%%
\subsection{The definition of the interpretation}

Recall that $F\colon \Iso{\B} \to \Iso{\A}$ is a Borel functor. As everything will relativize, we will assume from now on that it is a lightface Borel operator.

\begin{definition}
We define the domain of interpretation, $\Dom{\B}{\A}$, as a subset of $\B^{*} \times \omega$ as follows:
For $(\bar{b},i) \in \B^{*} \times \omega$, let
\[
(\bar{b},i) \in \Dom{\B}{\A}   \iff  (\bbar,\bbar) \forces_{(\B^{*})^2}   F(\B_{\dot{g}_1}, \dot{g}_2\circ \dot{g}_1, \B_{\dot{g}_2})(i)  = i. 
\]
Recall that subsets of $\B^{<\omega}\times \omega$ can be effectively coded by subsets of $\B^{<\om}$.
Next, we define a relation $\sim$ on $\Dom{\B}{\A}$ which we will later prove is an equivalence relations.
For  $(\bar{b},i), (\bar{c},j) \in \Dom{\B}{\A}$, let 
\[
(\bar{b},i) \sim (\bar{c},j)
	\iff
(\bbar,\cbar) \forces_{(\B^{*})^2}   F(\B_{\dot{g}_1}, \dot{g}_2\circ \dot{g}_1, \B_{\dot{g}_2})(i)  = j.
\] 
Last, we need to interpret the relation symbols.
For each relation symbol $P_i$ of arity $p(i)$ in the language of $\A$, we define a relation $R_i$ on $\Dom{\B}{\A}$ as follows:
For $(\bar{b}_1,k_1),\ldots,(\bar{b}_{p(i)},k_{p(i)}) \in  \Dom{\B}{\A}$, let 
\begin{multline*}
((\bar{b}_1,k_1),\ldots,(\bar{b}_{p(i)},k_{p(i)}))\in R_i
		\iff  
(\exists\bar{c}\in\B^{*})(\exists j_1,\ldots,j_{p(i)} \in \omega)\\ 
\left( \bigwedge_{s=1}^{p(i)}(\bar{b}_s,k_s) \sim (\bar{c},j_s)  \right) \and\left( \cbar \forces_{\B^{*}} (j_1,\ldots,j_{p(i)}) \in P_i^{F(\B_{\dot{g}})}\right).
\end{multline*}
\end{definition}

\noindent By Lemma \ref{lem:definability}, these are all defined by formulas of $\L_{\omega_1 \omega}$ since they can be expressed in $L'$.

%%%%%%%%%%%%%%%%%%%%%%%%%%%%
\subsection{Verifications}

The first thing to observe before starting the verifications is that since $F$ is a functor that works for all copies of $\B$, all its properties are forced by the empty conditions.
For instance,
$$(\emptyset,\emptyset,\emptyset)\forces_{\B^{*3}} F(\B_{\dot{g}_2}, \dot{g}_3^{-1}\circ \dot{g}_2, \B_{\dot{g}_3}) \circ F(\B_{\dot{g}_1}, \dot{g}_2^{-1}\circ \dot{g}_1, \B_{\dot{g}_2}) = F(\B_{\dot{g}_1}, \dot{g}_3^{-1}\circ \dot{g}_1, \B_{\dot{g}_3}).$$

\begin{lemma}
$\sim$ is an equivalence relation on $\Dom{\B}{\A}$,
\end{lemma}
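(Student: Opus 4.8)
I want to verify reflexivity, symmetry, and transitivity of $\sim$ on $\Dom{\B}{\A}$. The key principle, already highlighted in the excerpt, is that all functorial identities are forced by the empty conditions in the appropriate product forcings, and that forcing for the restricted language $\L'$ respects restrictions (Lemma \ref{lem:restriction}) and behaves well under the Forcing Lemma. So each of the three properties should follow by applying a generic $\boldsymbol{g}$, reading off what the functor identities say about the maps $F(\B_{\dot g_i},\dot g_j\circ \dot g_i,\B_{\dot g_j})$, and then pulling back through the Forcing Lemma.

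\emph{Reflexivity.} For $(\bbar,i)\in\Dom{\B}{\A}$ we need $(\bbar,i)\sim(\bbar,i)$, i.e. $(\bbar,\bbar)\forces_{(\B^*)^2} F(\B_{\dot g_1},\dot g_2\circ\dot g_1,\B_{\dot g_2})(i)=i$. But this is literally the defining condition for membership in $\Dom{\B}{\A}$, so reflexivity is immediate from the definitions.

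\emph{Symmetry.} Suppose $(\bbar,i)\sim(\cbar,j)$, so $(\bbar,\cbar)\forces F(\B_{\dot g_1},\dot g_2\circ\dot g_1,\B_{\dot g_2})(i)=j$. I would take a $\B$-generic pair $\boldsymbol g=(g_1,g_2)\supseteq(\bbar,\cbar)$. By the Forcing Lemma, $F(\B_{g_1},g_2\circ g_1,\B_{g_2})(i)=j$. Since $F$ is a functor, the empty condition in $(\B^*)^2$ forces that $F(\B_{\dot g_2},\dot g_1\circ \dot g_2,\B_{\dot g_1})=F(\B_{\dot g_1},\dot g_2\circ\dot g_1,\B_{\dot g_2})^{-1}$ (this is exactly one of the $\L'$-expressible identities noted before the lemma), so applying the Forcing Lemma again to $\boldsymbol g$ we get $F(\B_{g_2},g_1\circ g_2,\B_{g_1})(j)=i$. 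Now this is a $\Dec$ fact about $(g_2,g_1)$, so by the Forcing Lemma there is a condition $(\bbar',\cbar')\subseteq(g_2,g_1)$ forcing $F(\B_{\dot g_1},\dot g_2\circ\dot g_1,\B_{\dot g_2})(j)=i$; extending and using that $(g_2,g_1)\supseteq(\cbar,\bbar)$ together with Lemma \ref{lem:ext}, we conclude $(\cbar,\bbar)\forces F(\B_{\dot g_1},\dot g_2\circ\dot g_1,\B_{\dot g_2})(j)=i$, i.e. $(\cbar,j)\sim(\bbar,i)$. (One should also note $(\cbar,j)\in\Dom{\B}{\A}$, which follows the same way from reflexivity-type reasoning, or is subsumed since $\sim$ is only considered on $\Dom{\B}{\A}$.)

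\emph{Transitivity.} Suppose $(\bbar,i)\sim(\cbar,j)$ and $(\cbar,j)\sim(\dbar,k)$. Here I would move to the three-fold product $(\B^*)^3$. The empty condition $(\emptyset,\emptyset,\emptyset)$ forces the composition identity $F(\B_{\dot g_2},\dot g_3\circ\dot g_2,\B_{\dot g_3})\circ F(\B_{\dot g_1},\dot g_2\circ\dot g_1,\B_{\dot g_2})=F(\B_{\dot g_1},\dot g_3\circ\dot g_1,\B_{\dot g_3})$. Take a $\B$-generic triple $(g_1,g_2,g_3)$ extending $(\bbar,\cbar,\dbar)$; by Lemma \ref{lem:restriction} and the Forcing Lemma, $(g_1,g_2)$ witnesses $F(\B_{g_1},g_2\circ g_1,\B_{g_2})(i)=j$ and $(g_2,g_3)$ witnesses $F(\B_{g_2},g_3\circ g_2,\B_{g_3})(j)=k$, so composing, $F(\B_{g_1},g_3\circ g_1,\B_{g_3})(i)=k$. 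This is a $\Dec$ statement about $(g_1,g_3)$, so by the Forcing Lemma some condition $\subseteq(g_1,g_3)$ forces it; since $(g_1,g_3)\supseteq(\bbar,\dbar)$, by Lemma \ref{lem:ext} already $(\bbar,\dbar)\forces F(\B_{\dot g_1},\dot g_2\circ\dot g_1,\B_{\dot g_2})(i)=k$, i.e. $(\bbar,i)\sim(\dbar,k)$.

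\textbf{Main obstacle.} The routine but genuinely delicate point is the repeated "move a fact observed along a generic back down to a forcing condition on a sub-tuple of coordinates": one must be careful that the statements involved start with a $\bigwedge$ (or are $\Dec$, hence available in both polarities) so that part (2) of the Forcing Lemma applies and the pulled-back condition can be taken below a \emph{prescribed} initial segment (here $(\bbar,\cbar)$, $(\cbar,\dbar)$, etc.), rather than an arbitrary one. This is exactly where the restricted language $\L'$, the definability of forcing (Lemma \ref{lem:definability}), and Lemmas \ref{lem:ext}, \ref{lem:decides}, \ref{lem:neg} do their work; once the bookkeeping of which coordinates each sub-fact lives on is set up correctly, all three properties drop out of the functor axioms (N1) and (N2).
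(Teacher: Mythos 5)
Your proof is correct and takes essentially the same approach as the paper's (which is much terser: the paper simply cites, for symmetry, that the empty condition forces $F(\B_{\dot{g}_2}, \dot{g}_1^{-1}\circ \dot{g}_2, \B_{\dot{g}_1}) = F(\B_{\dot{g}_1}, \dot{g}_2^{-1}\circ \dot{g}_1, \B_{\dot{g}_2})^{-1}$, and for transitivity, that the empty condition forces the composition identity, leaving the rest implicit). One minor slip in your write-up: in both the symmetry and transitivity steps, you obtain a condition $(\bbar',\cbar')\subseteq(g_2,g_1)$ (resp.\ $\subseteq(g_1,g_3)$) forcing the target sentence, and then invoke Lemma~\ref{lem:ext} together with the fact that the generic extends $(\cbar,\bbar)$ (resp.\ $(\bbar,\dbar)$) to conclude that $(\cbar,\bbar)$ forces it. That doesn't follow from Lemma~\ref{lem:ext} alone, since $(\bbar',\cbar')$ could properly extend $(\cbar,\bbar)$; the correct move is the one you yourself flag in your final paragraph: express the sentence as a $\Pic$ formula beginning with $\bigwedge$, observe (via the functor identity holding on \emph{every} generic tuple extending $(\cbar,\bbar)$, not just the one you started with) that it is true of all such generics, and then apply part~(2) of the Forcing Lemma. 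With that substitution your argument is exactly the unpacked version of the paper's one-line proof.
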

\begin{proof}
Reflexivity follows from the definition of $\Dom{\B}{\A}$.
Symmetry holds because  $(\emptyset,\emptyset)\forces_{\B^{*2}} F(\B_{\dot{g}_2}, \dot{g}_1^{-1}\circ \dot{g}_2, \B_{\dot{g}_1}) = F(\B_{\dot{g}_1}, \dot{g}_2^{-1}\circ \dot{g}_1, \B_{\dot{g}_2})^{-1}$.
Transitivity follows from the fact that $(\emptyset,\emptyset,\emptyset)\forces_{\B^{*3}} F(\B_{\dot{g}_2}, \dot{g}_3^{-1}\circ \dot{g}_2, \B_{\dot{g}_3}) \circ F(\B_{\dot{g}_1}, \dot{g}_2^{-1}\circ \dot{g}_1, \B_{\dot{g}_2}) = F(\B_{\dot{g}_1}, \dot{g}_3^{-1}\circ \dot{g}_1, \B_{\dot{g}_3})$.
\end{proof}

The next objective of this subsection is to define a map $\Fra \colon A\to \Dom{\B}{\A}$ which gives an isomorphism between $\A$ and its interpretation within $\B$.
Remember we are fixing a copy of $\B$, and that $\A=F(\B)$.

Let $g\colon\om\to\B$ be generic; before defining $\Fra$, we define a map $\Fra_g\colon F(\B_g) \to \Dom{\B}{\A}$ also intended to be an isomorphism (Lemma \ref{lem:iso}).
Given $i$, we let $\Fra_g(i)$ be the least tuple of the form $(\cbar, i)$ for $\cbar\subset g$, and with $(\cbar,i)\in \Dom{\B}{\A}$ (we prove such a tuple exists in Lemma \ref{lem:exists}).
We will need to show that all of this works (Lemmas \ref{lem:onto} and \ref{lem:unique}).
Then, to define $\Fra$, we simply compose $\Fra_g\colon F(\B_g)\to\Dom{\B}{\A}$ with $F(\B, g^{-1}, \B_g)\colon \A\to F(\B_g)$.
We will also need to show that this definition is independent of the choice of $g$ (Lemma \ref{lem:indep-g}).

The first lemma shows that $\Fra_g(i)$ is defined for every $i$.

\begin{lemma}\label{lem:exists}
For every generic $g\colon\om\to\B$ and every $i\in\om$ there exists $n\in\om$ such that $(g\upto n, i)\in \Dom{\B}{\A}$.
\end{lemma}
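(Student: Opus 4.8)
The plan is to unwind the definition: by construction, $(g\upto n,i)\in\Dom{\B}{\A}$ asserts exactly that the diagonal condition $(g\upto n,g\upto n)$ forces, in the forcing $(\B^{*})^2$, the sentence
\[
\varphi\ \equiv\ F(\B_{\dot g_1},\dot g_2^{-1}\circ\dot g_1,\B_{\dot g_2})(i)=i ,
\]
so it suffices to produce some $n$ with $(g\upto n,g\upto n)\forces_{(\B^{*})^2}\varphi$. The key starting point is that $\varphi$ is true along the diagonal: since $g$ is generic it is a bijection, so $\B_g$ is a copy of $\B$, and the functor axiom (N1) gives $F(\B_g,\id_{\B_g},\B_g)=\id_{F(\B_g)}$; as $g^{-1}\circ g=\id$, this is precisely the statement that $\varphi$ becomes true when both $\dot g_1$ and $\dot g_2$ are interpreted as $g$.

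Next I would pass to a single generic. Formally substituting $\dot g_2:=\dot g_1$ throughout $\varphi$ produces a sentence $\varphi^\Delta$ in the forcing language for $(\B^{*})^1$ that still lies in $\L'$ — on the diagonal each atom $R^{\B_{\dot g_2}}(\bar a)$ becomes $R^{\B_{\dot g_1}}(\bar a)$, and each atom $\dot g_2^{-1}\circ\dot g_1(m)=n$ becomes the admissible atom $\dot g_1^{-1}\circ\dot g_1(m)=n$ — and $\varphi^\Delta$ satisfies $\varphi^\Delta[g]=\varphi[g,g]$. By the previous paragraph $\varphi^\Delta[g]$ holds, so the Forcing Lemma, applied to the single--generic forcing $\B^{*}$, produces an $n$ with $g\upto n\forces_{\B^{*}}\varphi^\Delta$; by Lemma \ref{lem:ext} this $n$ may be enlarged at will.

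It then remains to transfer this to $(\B^{*})^2$, that is, to show by induction on $\varphi$ that $g\upto n\forces_{\B^{*}}\varphi^\Delta$ implies $(g\upto n,g\upto n)\forces_{(\B^{*})^2}\varphi$. For atomic, finitary, and $\bigvee$ formulas this is immediate, because along a diagonal condition the two coordinates literally coincide and the corresponding clauses of Definition \ref{def:forcing} agree. The case $\varphi\equiv\bigwedge_m\psi_m$ is the real content of the lemma, and I expect it to be the main obstacle: a diagonal condition $(g\upto n,g\upto n)$ in $(\B^{*})^2$ has refinements whose two coordinates disagree, and these are invisible to the single--generic forcing for $\varphi^\Delta$, so bare genericity of $g$ is not enough — in fact the analogous transfer fails for a general sentence of $\L'$. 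Here one must use that $F$ is genuinely a functor and not merely a Borel operator: the composition and inverse identities for $F$ are forced by the empty conditions of $(\B^{*})^2$ and $(\B^{*})^3$ (as recorded above), and I would exploit them at triples of conditions of the form $(\bar c_1,\bar c_2,\bar c_1)$, where the first and third coordinates repeat, so that the isomorphism induced between the first and third copies is forced to agree with the identity on the shared domain $\dom{\bar c_1}$. The point to push through is that, once the common initial segment $g\upto n$ is long enough, this forces $F(\B_{\dot g_1},\dot g_2^{-1}\circ\dot g_1,\B_{\dot g_2})(i)$ to equal $i$ in every refinement of $(g\upto n,g\upto n)$ — which is exactly the $\bigwedge$--clause of Definition \ref{def:forcing}. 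Carrying this through closes the induction and gives $(g\upto n,i)\in\Dom{\B}{\A}$.
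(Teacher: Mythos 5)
Your setup correctly pins down what needs to be shown and the first two paragraphs are fine: $\varphi[g,g]$ holds by $(\mathrm{N1})$, the diagonal substitution $\varphi^\Delta$ lies in $\L'$, and the Forcing Lemma for the single forcing $\B^*$ yields an $n$ with $g\upto n\forces_{\B^*}\varphi^\Delta$. But the whole argument rests on the ``transfer'' implication $g\upto n\forces_{\B^*}\varphi^\Delta\ \Rightarrow\ (g\upto n,g\upto n)\forces_{(\B^{*})^2}\varphi$, and you never actually prove it. You correctly observe that the $\bigwedge$-clause is exactly where a syntactic induction breaks down (a diagonal condition has non-diagonal refinements which the single forcing never sees), but then the purported fix --- that at a triple $(\bar c_1,\bar c_2,\bar c_1)$ ``the isomorphism induced between the first and third copies is forced to agree with the identity on the shared domain $\mathrm{dom}(\bar c_1)$'' --- is not a correct statement. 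Two extensions $g_1,g_3$ of $\bar c_1$ can diverge immediately past $\bar c_1$, and nothing in the functor axioms forces $F(\B_{g_1},g_3^{-1}\circ g_1,\B_{g_3})$ to fix every $i<|\bar c_1|$; indeed, that a \emph{single} $i$ is eventually fixed once $n$ is large enough is precisely what the lemma asserts, so as phrased the fix assumes what is to be proved. No concrete chain of forcing deductions is given to close the $\bigwedge$-case, so the proposal has a genuine gap at its crux.

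The paper avoids the transfer problem altogether. Instead of working along the diagonal, it introduces a genuinely mutual generic $g_2$ (so $(g_1,g_2)$ is generic for $(\B^*)^2$), sets $j=F(\B_{g_1},g_2^{-1}\circ g_1,\B_{g_2})(i)$, and picks $\bbar\subset g_1$, $\cbar\subset g_2$ forcing both $i\mapsto j$ and, by the inverse identity, $j\mapsto i$. It then passes to the triple condition $(\bbar,\cbar,\bbar)$ in $(\B^*)^3$, uses the forced composition identity to conclude $(\bbar,\cbar,\bbar)\forces F(\B_{\dot g_1},\dot g_3^{-1}\circ\dot g_1,\B_{\dot g_3})(i)=i$, and finally --- this is the step your proposal has no analogue of --- applies the Restriction Lemma (Lemma~\ref{lem:restriction}) to delete the middle coordinate, since $\dot g_2$ no longer occurs, yielding $(\bbar,\bbar)\forces_{(\B^*)^2}F(\B_{\dot g_1},\dot g_2^{-1}\circ\dot g_1,\B_{\dot g_2})(i)=i$. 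The auxiliary value $j$ and the Restriction Lemma are exactly the devices that make the diagonal condition force the $\bigwedge$-statement against \emph{all} of its non-diagonal refinements, and those are the pieces missing from your sketch.
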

\begin{proof}
Let $g_2$ be generic with respect to $g = g_1$ so that $(g_1,g_2)$ is generic for $(\B^*)^2$. Let $j= F(\B_{g_1}, g_2^{-1}\circ g_1, \B_{g_2})(i)$.
For some $\bbar\subset g_1$ and some $\cbar\subset g_2$, $(\bbar,\cbar)\forces_{(\B^*)^2} F(\B_{\dot{g}_1}, \dot{g}_2^{-1}\circ \dot{g}_1, \B_{\dot{g}_2})(i) = j$.
Notice that we also have $(\bbar,\cbar)\forces_{(\B^*)^2}  F( \B_{\dot{g}_2}, \dot{g}_1^{-1}\circ \dot{g}_2,\B_{\dot{g}_1})(j ) = i$.
It then follows that
\[
(\bbar,\cbar,\bbar)\forces_{(\B^*)^3} F(\B_{\dot{g}_1}, \dot{g}_2^{-1}\circ \dot{g}_1, \B_{\dot{g}_2})(i) = j  \and F( \B_{\dot{g}_2}, \dot{g}_3^{-1}\circ \dot{g}_2,\B_{\dot{g}_3})(j ) = i,
\]
and hence 
\[
(\bbar,\cbar,\bbar)\forces_{(\B^*)^3} F(\B_{\dot{g}_1}, \dot{g}_3^{-1}\circ \dot{g}_1, \B_{\dot{g}_3})(i) = i.
\]
Since $g_2$ does not appear in the formula above, by Lemma \ref{lem:restriction} we get
$$(\bbar,\bbar)\forces_{(\B^*)^2} F(\B_{\dot{g}_1}, \dot{g}_2^{-1}\circ \dot{g}_1, \B_{\dot{g}_2})(i) = i,$$
and hence that $(\bbar,i)\in \Dom{\B}{\A}$.
\end{proof}

The second lemma shows that $\Fra_g$ is onto the set of $\sim$-equivalence classes.

\begin{lemma}\label{lem:onto}
For every generic $g\colon\om\to\B$ and every $(\cbar,j)\in\Dom{\B}{\A}$ there exists $n\in\om$ and $i\in\om$ such that $(g\upto n, i)\sim (\cbar,j)$.
\end{lemma}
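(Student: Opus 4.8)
The goal is to show that every $\sim$-class with a representative $(\cbar,j)$ is hit by $\Fra_g$, i.e.\ that for a generic $g$ there is an initial segment $\bbar = g\upto n$ and an $i$ with $(\bbar,i)\sim(\cbar,j)$. The natural move is to produce the index $i$ as $i = F(\B_{h},g^{-1}\circ h,\B_g)(j)$ for a suitable copy, and then show that the forcing condition witnessing this also forces the ``loop back'' statement defining $(\bbar,i)\in\Dom{\B}{\A}$ and the statement defining $(\bbar,i)\sim(\cbar,j)$. Concretely, I would first unwind what it means that $(\cbar,j)\in\Dom{\B}{\A}$: by definition $(\cbar,\cbar)\forces_{(\B^*)^2} F(\B_{\dot g_1},\dot g_2\circ\dot g_1,\B_{\dot g_2})(j) = j$. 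Here $\cbar$ is some finite one-to-one tuple from $\B$.

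Next I would set up the genericity. Let $h$ be a generic extending $\cbar$, mutually generic with $g = g_1$; then $(g,h)$ is generic for $(\B^*)^2$ with $h$ extending $\cbar$. Put $i = F(\B_{h}, g^{-1}\circ h,\B_g)(j)$ (more precisely, using the functor in the form $F(\B_{\dot g_2},\dot g_1^{-1}\circ\dot g_2,\B_{\dot g_1})(j)$ with $\dot g_2\mapsto h$, $\dot g_1\mapsto g$). By the Forcing Lemma there are $\bbar\subset g$ and $\dbar\subset h$ with $\cbar\subseteq\dbar$ such that $(\bbar,\dbar)$ forces the corresponding statement. The plan is then to run the same three-copy argument as in Lemma \ref{lem:exists}: using the empty condition forcing of the composition and inverse identities for $F$, show that a three-copy condition built from $\bbar$, $\dbar$, $\bbar$ forces $F(\B_{\dot g_1},\dot g_3^{-1}\circ\dot g_1,\B_{\dot g_3})(i) = i$, hence (by Lemma \ref{lem:restriction}, since $\dot g_2$ — the copy built from $\dbar$ — does not occur) $(\bbar,\bbar)\forces_{(\B^*)^2} F(\B_{\dot g_1},\dot g_2^{-1}\circ\dot g_1,\B_{\dot g_2})(i) = i$, giving $(\bbar,i)\in\Dom{\B}{\A}$. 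Similarly, combining $(\cbar,\cbar)\forces\cdots(j)=j$ with $(\bbar,\dbar)\forces\cdots(j)=i$ and the composition identity across three copies with underlying conditions $\cbar$, $\dbar$, $\bbar$, and then restricting away the $\dbar$-copy, should yield $(\cbar,\bbar)\forces_{(\B^*)^2} F(\B_{\dot g_1},\dot g_2^{-1}\circ\dot g_1,\B_{\dot g_2})(j) = i$, which is exactly $(\cbar,j)\sim(\bbar,i)$; by symmetry of $\sim$ this is $(g\upto n,i)\sim(\cbar,j)$ for $n = |\bbar|$.

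The main obstacle, as in Lemma \ref{lem:exists}, is bookkeeping with the indices of the generics: one must be careful that the ``inner'' generic $h$ extending $\cbar$ really can be chosen mutually generic with $g$, and that the restriction lemma applies — i.e.\ that after assembling the three-copy forced statement, the copy we want to discard genuinely does not occur in the target formula $F(\B_{\dot g_1},\dot g_3^{-1}\circ\dot g_1,\B_{\dot g_3})(i)=i$ (resp.\ in $F(\B_{\dot g_1},\dot g_2^{-1}\circ\dot g_1,\B_{\dot g_2})(j) = i$). The other point needing a little care is that the value $i$ produced is a genuine natural number independent of the irrelevant choices — but this is immediate since $F$ is a functor defined on all copies, so the value is forced by the finite condition $(\bbar,\dbar)$, and Lemma \ref{lem:ext} and Lemma \ref{lem:neg} pin it down. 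Everything else is a direct transcription of the argument already given for Lemma \ref{lem:exists}, now with the extra input $(\cbar,\cbar)\forces\cdots(j)=j$ coming from the hypothesis $(\cbar,j)\in\Dom{\B}{\A}$.
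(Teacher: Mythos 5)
Your proposal is correct and follows essentially the same route as the paper's proof: define $i$ as the image of $j$ under the functorial map to a fresh generic copy $h \supseteq \cbar$ mutually generic with $g$, find a finite condition $(\bbar,\dbar)$ below $(g,h)$ forcing the relevant statement (taking $\bbar$ large enough that $(\bbar,i)\in\Dom{\B}{\A}$ via the Lemma~\ref{lem:exists} argument), then splice in $(\cbar,\cbar)\forces F(\cdot)(j)=j$ through a three-copy composition and restrict away the middle coordinate. The paper phrases the definition of $i$ as $j = F(\B_{g_1},g_2^{-1}\circ g_1,\B_{g_2})(i)$ and lands directly on $(\bbar,i)\sim(\cbar,j)$ rather than $(\cbar,j)\sim(\bbar,i)$ followed by symmetry, but these are the same argument up to coordinate bookkeeping.
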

\begin{proof}
The proof is similar to that of the lemma above. 
Let $g_2 \supseteq \bar{c}$ be generic with respect to $g=g_1$, and let $j= F(\B_{g_1}, g_2^{-1}\circ g_1, \B_{g_2})(i)$.
There are $\bbar\subset g_1$ and $\cbar'$ with $\cbar \subseteq \cbar' \subset g_2$ such that $(\bbar,\cbar')\forces_{(\B^*)^2} F(\B_{\dot{g}_1}, \dot{g}_2^{-1}\circ \dot{g}_1, \B_{\dot{g}_2})(i) = j$ and also $(\bbar,i)\in \Dom{\B}{\A}$.
Since $(\cbar,j) \in \Dom{\B}{\A}$, we see that $(\cbar,\cbar) \forces_{(\B^*)^2} F(\B_{\dot{g}_1}, \dot{g}_2^{-1}\circ \dot{g}_1, \B_{\dot{g}_2})(j) = j$. Then
\[ 
(\bbar,\cbar',\cbar) \forces_{(\B^*)^3} F(\B_{\dot{g}_1}, \dot{g}_2^{-1}\circ \dot{g}_1, \B_{\dot{g}_2})(i) = j  \and F( \B_{\dot{g}_2}, \dot{g}_3^{-1}\circ \dot{g}_2,\B_{\dot{g}_3})(j) = j
\]
and hence
\[ (\bbar,\cbar',\cbar) \forces_{(\B^*)^3} F(\B_{\dot{g}_1}, \dot{g}_3^{-1}\circ \dot{g}_1, \B_{\dot{g}_3})(i) = j.\]
Since $g_2$ does not appear in the formula above, $(\bbar,\cbar) \forces_{(\B^*)^2} F(\B_{\dot{g}_1}, \dot{g}_3^{-1}\circ \dot{g}_1, \B_{\dot{g}_3})(i) = j$ and so $(\bbar, i)\sim (\cbar,j)$.
\end{proof}

The third lemma shows that $\Fra_g$ is one-to-one on $\sim$-equivalence classes.

\begin{lemma}\label{lem:unique}
For $(\cbar,i)$, $(\dbar,j)\in \Dom{\B}{\A}$ with $\cbar\subseteq \dbar$ we have that  $(\cbar,i)  \sim  (\dbar,j)$ if and only if $i=j$.
\end{lemma}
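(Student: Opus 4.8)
The plan is to deduce everything from functoriality of $F$, transported to several mutually generic copies of $\B$. Recall from the discussion opening this subsection that the composition law is forced by the empty conditions; in particular, for any generic tuple $(g_1,g_2,g_3)$ for $(\B^*)^3$, writing $F_{kl}$ for the isomorphism $F(\B_{g_k},g_l^{-1}\circ g_k,\B_{g_l})\colon F(\B_{g_k})\to F(\B_{g_l})$, we have $F_{13}=F_{23}\circ F_{12}$, and each $F_{kl}$ is in particular a bijection of $\om$. I would also first record the auxiliary fact that $(\dbar,i)\in\Dom{\B}{\A}$: since $(\cbar,i)\in\Dom{\B}{\A}$ we have $(\cbar,\cbar)\forces_{(\B^*)^2}F(\B_{\dot g_1},\dot g_2^{-1}\circ\dot g_1,\B_{\dot g_2})(i)=i$, and $\cbar\subseteq\dbar$ gives $(\dbar,\dbar)\supseteq(\cbar,\cbar)$, so Lemma \ref{lem:ext} yields the same forcing statement for $(\dbar,\dbar)$, i.e.\ $(\dbar,i)\in\Dom{\B}{\A}$.

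Next I would isolate the core sub-claim: \emph{whenever $(g_1,g_2,g_3)$ is generic for $(\B^*)^3$ with $g_1\supseteq\cbar$ and $g_2,g_3\supseteq\dbar$ (hence also $g_2,g_3\supseteq\cbar$), one has $F_{13}(i)=i$ and $F_{23}(i)=i$.} Indeed, every pair drawn from a generic triple is itself generic for $(\B^*)^2$; since $(\cbar,\cbar)\subseteq(g_1,g_3)$ and $(\cbar,i)\in\Dom{\B}{\A}$, the Forcing Lemma gives $F(\B_{g_1},g_3^{-1}\circ g_1,\B_{g_3})(i)=i$, i.e.\ $F_{13}(i)=i$; similarly $(\dbar,\dbar)\subseteq(g_2,g_3)$ and $(\dbar,i)\in\Dom{\B}{\A}$ give $F_{23}(i)=i$. (Generics extending any prescribed finite conditions exist, as used throughout this section.)

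For the forward direction, suppose $(\cbar,i)\sim(\dbar,j)$, that is, $(\cbar,\dbar)\forces_{(\B^*)^2}F(\B_{\dot g_1},\dot g_2^{-1}\circ\dot g_1,\B_{\dot g_2})(i)=j$. Fix one generic triple $(g_1,g_2,g_3)$ as in the sub-claim; since $(\cbar,\dbar)\subseteq(g_1,g_2)$, the Forcing Lemma gives $F_{12}(i)=j$. Then $i=F_{13}(i)=F_{23}(F_{12}(i))=F_{23}(j)$, while also $F_{23}(i)=i$, so $F_{23}(j)=F_{23}(i)$ and injectivity of $F_{23}$ forces $i=j$. For the backward direction, suppose $i=j$; I must show $(\cbar,\dbar)\forces_{(\B^*)^2}F(\B_{\dot g_1},\dot g_2^{-1}\circ\dot g_1,\B_{\dot g_2})(i)=i$. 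Taking this defining formula in a form that begins with a $\bigwedge$ (its $\Pic_\alpha$ presentation), part (2) of the Forcing Lemma reduces the task to checking $F_{12}(i)=i$ for every generic $(g_1,g_2)\supseteq(\cbar,\dbar)$; extending such a pair to a generic triple with $g_3\supseteq\dbar$, the sub-claim gives $F_{13}(i)=i$ and $F_{23}(i)=i$, and from $F_{13}=F_{23}\circ F_{12}$ we get $F_{23}(F_{12}(i))=i=F_{23}(i)$, so $F_{12}(i)=i$ by injectivity of $F_{23}$.

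The main obstacle is not any single computation but the bookkeeping around the forcing apparatus: confirming that all the sentences involved are computable sentences of the restricted language $\L'$ so that the Forcing Lemma and Lemma \ref{lem:restriction} apply; choosing the $\bigwedge$-initial ($\Pic_\alpha$) presentation of the formula defining $\sim$ so that part (2) of the Forcing Lemma is available for the $\Leftarrow$ direction; and using that subtuples of a mutually generic tuple are again mutually generic so that each pair $(g_k,g_l)$ may be fed into the Forcing Lemma. Once those are in place, the argument is just pushing the functoriality identity $F_{13}=F_{23}\circ F_{12}$ through the three chosen copies of $\B$.
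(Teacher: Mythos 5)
Your proof is correct, but it is considerably more elaborate than the paper's, which proceeds in essentially one line. The paper simply observes that $(\cbar,\cbar)\forces_{(\B^*)^2}F(\B_{\dot g_1},\dot g_2^{-1}\circ\dot g_1,\B_{\dot g_2})(i)=i$ (this is the definition of $(\cbar,i)\in\Dom{\B}{\A}$), and that $(\cbar,\dbar)\supseteq(\cbar,\cbar)$ as a condition in $(\B^*)^2$, so by Lemma~\ref{lem:ext} the same formula is forced by $(\cbar,\dbar)$. Comparing with the defining condition $(\cbar,i)\sim(\dbar,j)\iff(\cbar,\dbar)\forces F(\dots)(i)=j$, the equivalence with $i=j$ then drops out immediately (if $i=j$ the forcing statement is literally the same; if $(\cbar,\dbar)$ forces both $F(\dots)(i)=i$ and $F(\dots)(i)=j$, pass to a generic extending $(\cbar,\dbar)$ and use that $F(\dots)$ is a function). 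You actually have this observation in hand --- you use exactly this extension-of-forcing step to establish the auxiliary fact $(\dbar,i)\in\Dom{\B}{\A}$ --- but you do not apply it to $(\cbar,\dbar)$ directly, and instead route the argument through a mutually generic triple $(g_1,g_2,g_3)$, the composition identity $F_{13}=F_{23}\circ F_{12}$, and injectivity of $F_{23}$. That machinery is what the paper needs for Lemmas~\ref{lem:exists} and~\ref{lem:onto}, but for Lemma~\ref{lem:unique} it is overkill; the cheap extension argument already delivers the result. Your careful attention to which syntactic presentation of the formula is used in part~(2) of the Forcing Lemma is legitimate (it is glossed over in the paper), but the shorter proof sidesteps part~(2) entirely by only invoking part~(1).
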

\begin{proof}
By definition, $(\cbar,i)  \sim  (\dbar,j)$ if and only if $(\cbar,\dbar)\forces_{(\B^*)^2} F(\B_{\dot{g}_1}, \dot{g}_2^{-1}\circ \dot{g}_1, \B_{\dot{g}_2})(i) = j$.
But since $(\cbar,i)\in \Dom{\B}{\A}$, we know $(\cbar,\cbar)\forces_{(\B^*)^2} F(\B_{\dot{g}_1}, \dot{g}_2^{-1}\circ \dot{g}_1, \B_{\dot{g}_2})(i) = i$.
With $(\cbar,\dbar)$ extending $(\cbar,\cbar)$, we get that $(\cbar,i)  \sim  (\dbar,j)$ if and only if $i=j$.
\end{proof}

So we have that $\Fra_g$ is a bijection from $\om$ onto $\Dom{\B}{\A}/\sim$.
We now show that it is an isomorphism from $F(\B_g)$ to $(\Dom{\B}{\A} /\!\sim; ~R_0/\!\sim,~R_1/\!\sim,...)$.

\begin{lemma}\label{lem:iso}
For every relation symbol $P_i$, and $(j_1,...,j_{p(i)})\in \om^{p(i)}$, $F(\B_g)\models P_i(j_1,...,j_{p(i)}) \iff (\Fra_g(j_1),...,\Fra_g(j_{p(i)})) \in R_i$. 
\end{lemma}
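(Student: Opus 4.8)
The plan is to unwind both sides through the forcing relation and reduce to the definition of $R_i$, using the functoriality facts (forced by the empty conditions) and the Forcing Lemma. First I would fix a generic $g$ and let $(\cbar_s, k_s) = \Fra_g(j_s)$ for $s = 1, \ldots, p(i)$, so each $\cbar_s \subset g$, $(\cbar_s, k_s) \in \Dom{\B}{\A}$, and in fact $k_s = j_s$ by Lemma \ref{lem:unique} (since $(g\upto n, j_s) \in \Dom{\B}{\A}$ forces $F(\B_{\dot g_1}, \dot g_2^{-1}\circ \dot g_1, \B_{\dot g_2})(j_s) = j_s$, so the ``$i$-coordinate'' of $\Fra_g(j_s)$ is $j_s$ itself). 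Let $\cbar \subset g$ be a common extension of all the $\cbar_s$, large enough to decide the finitely many relevant facts.

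For the forward direction, suppose $F(\B_g) \models P_i(j_1, \ldots, j_{p(i)})$. Since $g$ is generic and $F$ is Borel, for some $\cbar \subset g$ we have $\cbar \forces_{\B^*} (j_1,\ldots,j_{p(i)}) \in P_i^{F(\B_{\dot g})}$ (this uses the Forcing Lemma applied to the $\Pic$-type statement expressing membership in $P_i^{F(\B_{\dot g})}$, or rather: $g$ decides this sentence and it holds, so some initial segment forces it). Enlarging $\cbar$ so that also $\cbar \supseteq \cbar_s$ for every $s$, we then have witnesses: taking $\bar c$ (in the existential clause of the definition of $R_i$) to be this $\cbar$ and $j_s$ to be $k_s = j_s$, the clause $\bigwedge_s (\cbar_s, k_s) \sim (\cbar, j_s)$ holds because $(\cbar_s, k_s)$ and $(\cbar, j_s)$ with $\cbar_s \subseteq \cbar$ and $k_s = j_s$ satisfy $\sim$ by Lemma \ref{lem:unique}, and the clause $\cbar \forces_{\B^*}(j_1,\ldots,j_{p(i)}) \in P_i^{F(\B_{\dot g})}$ is exactly what we arranged. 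Hence $(\Fra_g(j_1), \ldots, \Fra_g(j_{p(i)})) = ((\cbar_1,k_1),\ldots,(\cbar_{p(i)},k_{p(i)})) \in R_i$.

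For the reverse direction, suppose $((\cbar_1, k_1), \ldots, (\cbar_{p(i)}, k_{p(i)})) \in R_i$, so there are $\bar d \in \B^*$ and $l_1, \ldots, l_{p(i)} \in \om$ with $(\cbar_s, k_s) \sim (\bar d, l_s)$ for each $s$ and $\bar d \forces_{\B^*} (l_1, \ldots, l_{p(i)}) \in P_i^{F(\B_{\dot g})}$. The relation $(\cbar_s, k_s)\sim(\bar d, l_s)$ unpacks, via the empty-condition functoriality facts and the composition property $F(\B_{\dot g_2}, \dot g_3^{-1}\circ \dot g_2, \B_{\dot g_3}) \circ F(\B_{\dot g_1}, \dot g_2^{-1}\circ \dot g_1, \B_{\dot g_2}) = F(\B_{\dot g_1}, \dot g_3^{-1}\circ \dot g_1, \B_{\dot g_3})$, into the statement that for a generic $g'$ extending $\bar d$, the value $F(\B_{g}, (g')^{-1}\circ g, \B_{g'})$ carries $k_s$ to $l_s$; combined with the fact that $F(\B_{g}, (g')^{-1}\circ g, \B_{g'})$ is the isomorphism-component of $F$ applied to the isomorphism $(g')^{-1}\circ g \colon \B_g \to \B_{g'}$, which therefore maps $P_i^{F(\B_g)}$ onto $P_i^{F(\B_{g'})}$, and that $\bar d$ forces $(l_1,\ldots,l_{p(i)}) \in P_i^{F(\B_{\dot g'})}$, we conclude $F(\B_g) \models P_i(k_1, \ldots, k_{p(i)})$, i.e. $F(\B_g) \models P_i(j_1, \ldots, j_{p(i)})$ since $k_s = j_s$. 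The main obstacle I expect is bookkeeping the third generic and getting the ``pushforward of $P_i$ along the functor-induced isomorphism'' argument to go through cleanly at the level of forcing conditions rather than just for honest generics; this is where one must carefully invoke that all these structural identities about $F$ are forced by empty conditions (as noted at the start of the Verifications subsection) and then use the Forcing Lemma and Lemma \ref{lem:restriction} to eliminate the auxiliary generic, exactly as in the proofs of Lemmas \ref{lem:exists} and \ref{lem:onto}.
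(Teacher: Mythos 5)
Your proof is correct and takes essentially the same route as the paper's: the forward direction uses Lemma \ref{lem:unique} together with the observation that $\Fra_g(j_s)$ has second coordinate $j_s$, and the reverse direction transports the relation $P_i$ along the functorial isomorphism obtained from the $\sim$-witnesses. The only cosmetic difference is in the reverse direction: the paper argues purely at the level of forcing conditions, combining $(\bar c,\bar d)\forces F(\B_{\dot g_1},\dot g_2^{-1}\circ\dot g_1,\B_{\dot g_2})(k_s)=j_s$ with $\bar c\forces F(\B_{\dot g})\models P_i(\vec k)$ to deduce $(\bar c,\bar d)\forces F(\B_{\dot g_2})\models P_i(\vec j)$ and then restricting away $\dot g_1$; you instead introduce an actual auxiliary generic $g'\supset\bar d$ and argue semantically that $F(\B_g,(g')^{-1}\circ g,\B_{g'})$ is an isomorphism carrying $j_s\mapsto l_s$, then pull back $P_i$. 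When writing this up you should make explicit that $g'$ is chosen \emph{mutually} generic with $g$ (otherwise the Forcing Lemma for $(\B^*)^2$ doesn't apply to the pair), and in the forward direction you should note explicitly that $(\cbar,j_s)\in\Dom{\B}{\A}$ because $\cbar\supseteq\cbar_s$ and $(\cbar_s,j_s)\in\Dom{\B}{\A}$, via Lemma \ref{lem:ext}, so that Lemma \ref{lem:unique} is applicable. Finally, your anticipated ``third generic'' bookkeeping is not needed for this lemma (unlike in Lemmas \ref{lem:exists} and \ref{lem:onto}); two suffice, and the composition identity you cite is not actually used — only the fact that $F$ sends isomorphisms to isomorphisms.
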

\begin{proof}
First suppose that $F(\B_g)\models P_i(j_1,...,j_{p(i)})$. Then there is $\bar{c} \subseteq g$ such that $\bar{c} \forces_{\B^*} F(\B_{\dot{g}}) \models P_i(j_1,...,j_{p(i)})$; by Lemma \ref{lem:exists} we may also assume that $(\bar{c},j_s) \in \Dom{\B}{\A}$ for each $s$. Then by Lemma \ref{lem:unique}, $\Fra_g(j_s) \sim (\bar{c},j_s)$. Hence, by definition of $R_i$, $(\Fra_g(j_1),...,\Fra_g(j_{p(i)})) \in R_i$.

On the other hand, suppose that $(\Fra_g(j_1),...,\Fra_g(j_{p(i)})) \in R_i$. Then there are $\bar{c} \in \B^*$ and $k_1,\ldots,k_{p(i)}$ such that for each $s$, $\Fra_g(j_s) \sim (\bar{c},k_s)$ and $\bar{c} \forces_{\B^*} F(\B_{\dot{g}_1})\models P_i(k_1,...,k_{p(i)})$. Since $\Fra_g(j_s) \sim (\bar{c},k_s)$, there is $\bar{d} \subseteq g$ such that for each $s$, $(\bar{c},\bar{d}) \forces_{(\B^*)^2} F(\B_{\dot{g}_1}, \dot{g}_2^{-1}\circ \dot{g}_1, \B_{\dot{g}_2})(k_s) = j_s$. Then $(\bar{c},\bar{d}) \forces_{(\B^*)^2} F(\B_{\dot{g}_2}) \models P_i(j_1,...,j_{p(i)})$. Since $\dot{g}_1$ does not appear in this formula, $\bar{d} \forces F(\B_{\dot{g}}) \models P_i(j_1,...,j_{p(i)})$. But $g \supseteq \bar{d}$ is generic, so $F(\B_g) \models P_i(j_1,...,j_{p(i)})$.
\end{proof}

Last, we need to show that $\Fra$, defined as $\Fra_g\circ F(\B, g^{-1}, \B_g)$, is independent of the choice of the generic $g$.

\begin{lemma}\label{lem:indep-g}
For $i\in\om$ and $(\cbar,j)\in \Dom{\B}{\A}$, 
\[
\Fra(i) \sim (\cbar,j)    \iff    \cbar\forces_{\B^*} F(\B, \dot{g}_1^{-1}, \B_{\dot{g}_1})(i) = j.
\]
\end{lemma}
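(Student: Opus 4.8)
Recall that $\Fra$ is defined as $\Fra_g \circ F(\B,g^{-1},\B_g)$ for a generic $g$, and the claim is that this composite does not depend on $g$; the displayed equivalence is exactly a $g$-free characterization of when $\Fra(i) \sim (\cbar,j)$, so proving it simultaneously proves independence of $g$. First I would unwind the definition: $\Fra(i) = \Fra_g(k)$ where $k = F(\B, g^{-1}, \B_g)(i)$, which is the value such that $F(\B,g^{-1},\B_g)$ sends $i$ to $k$. So $\Fra(i) \sim (\cbar,j)$ means $\Fra_g(k) \sim (\cbar,j)$. By Lemma \ref{lem:onto} and Lemma \ref{lem:unique}, $\Fra_g(k) \sim (\cbar,j)$ holds iff there is $\dbar \subset g$ with $(\dbar,\dbar) \forces F(\B_{\dot g_1},\dot g_2^{-1}\circ \dot g_1,\B_{\dot g_2})(k) = k$ (i.e.\ $(\dbar,k)\in\Dom{\B}{\A}$) and $(\dbar,\cbar)\forces_{(\B^*)^2} F(\B_{\dot g_1}, \dot g_2^{-1}\circ\dot g_1,\B_{\dot g_2})(k) = j$; one can absorb this using the standard chaining trick into: there is $\dbar\subset g$ with $(\dbar,\cbar)\forces_{(\B^*)^2} F(\B_{\dot g_1},\dot g_2^{-1}\circ\dot g_1,\B_{\dot g_2})(k) = j$.

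Next I would translate $k = F(\B,g^{-1},\B_g)(i)$ into forced statements. Since this value is computed from $g$, there is $\dbar \subset g$ (enlarging if necessary) such that $\dbar \forces_{\B^*} F(\B, \dot g^{-1}, \B_{\dot g})(i) = k$. Now I would combine the two forced facts using the functoriality relation that holds on empty conditions: from $\dbar\forces F(\B,\dot g_1^{-1},\B_{\dot g_1})(i) = k$ and $(\dbar,\cbar)\forces F(\B_{\dot g_1},\dot g_2^{-1}\circ\dot g_1,\B_{\dot g_2})(k) = j$, together with $(\emptyset,\emptyset)\forces F(\B_{\dot g_1},\dot g_2^{-1}\circ\dot g_1,\B_{\dot g_2})\circ F(\B,\dot g_1^{-1},\B_{\dot g_1}) = F(\B,\dot g_2^{-1},\B_{\dot g_2})$, we get $(\dbar,\cbar)\forces F(\B,\dot g_2^{-1},\B_{\dot g_2})(i) = j$. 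Since $\dot g_1$ (here playing the role of the first coordinate, which is $g=g_1$) does not appear in the final formula, by Lemma \ref{lem:restriction} we obtain $\cbar\forces_{\B^*} F(\B,\dot g_1^{-1},\B_{\dot g_1})(i) = j$. For the converse direction, assuming $\cbar\forces_{\B^*} F(\B,\dot g^{-1},\B_{\dot g})(i)=j$, I take $\dbar\subset g$ forcing $F(\B,\dot g^{-1},\B_{\dot g})(i) = k$ for the appropriate $k$ (the value extracted by $\Fra_g$), WLOG $\cbar\subseteq\dbar$ or working in a larger product; run the functoriality argument in reverse to get $(\dbar,\cbar)\forces F(\B_{\dot g_1},\dot g_2^{-1}\circ\dot g_1,\B_{\dot g_2})(k)=j$, which gives $\Fra_g(k)\sim(\cbar,j)$, i.e.\ $\Fra(i)\sim(\cbar,j)$.

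The main obstacle I anticipate is bookkeeping the product-forcing coordinates correctly: one has to be careful that when passing between the two-coordinate forcing $(\B^*)^2$, the three-coordinate forcing $(\B^*)^3$ used to chain the functoriality relations, and the single-coordinate forcing $\B^*$, the generic $g$ consistently plays the same role, and that after applying the chaining the "intermediate" generic ($g_2$ or $g_3$) genuinely disappears from the formula so that Lemma \ref{lem:restriction} applies. A secondary subtlety is confirming that $\Fra(i)\sim(\cbar,j)$ is well-defined independent of the representative $(\dbar,k)$ of $\Fra_g(i)$ that one picks—but this follows because $\sim$ is an equivalence relation (already proved) and $(\dbar,k)\sim\Fra_g(i)$ always. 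Once the equivalence is established, independence of $g$ is immediate: the right-hand side $\cbar\forces_{\B^*}F(\B,\dot g_1^{-1},\B_{\dot g_1})(i)=j$ makes no reference to $g$, so if two generics $g,g'$ gave $\Fra(i)$ and $\Fra'(i)$, both would be $\sim$-equivalent to exactly the same set of $(\cbar,j)$, hence $\Fra(i)\sim\Fra'(i)$, and since $\sim$ collapses to equality on the quotient, they represent the same element.
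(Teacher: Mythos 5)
Your proof is correct and takes essentially the same route as the paper's: introduce $k = F(\B,g^{-1},\B_g)(i)$, find a finite $\dbar\subset g$ forcing $F(\B,\dot g^{-1},\B_{\dot g})(i)=k$, chain with the forced functoriality identity on empty conditions to move between $F(\B_{\dot g_1},\dot g_2^{-1}\circ\dot g_1,\B_{\dot g_2})(k)=j$ and $F(\B,\dot g_2^{-1},\B_{\dot g_2})(i)=j$, and then use the Restriction Lemma to drop the extraneous coordinate. The paper streamlines the bookkeeping slightly by keeping a separate $\dbar'\supseteq\dbar$ rather than enlarging $\dbar$ in place, and by fixing a second generic $g_2\supset\cbar$ at the outset, but these are cosmetic differences and your argument is sound.
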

\begin{proof}
Let $g_2 \supset \cbar$ be generic relative to $g = g_1$. Let $k = F(\B, g_1^{-1}, \B_{g_1})(i)$ and $(\dbar,k)=\Fra(i) = \Fra_{g_1}(k)$. For some $\dbar' \supseteq \dbar$, $\dbar' \forces_{\B^*} F(\B, \dot{g}_1^{-1}, \B_{\dot{g}_1})(i) = k$.

Suppose that $\Fra(i) \sim (\cbar,j)$. Then $(\dbar,k) \sim (\cbar,j)$ and so $(\dbar, \cbar)\forces_{(\B^*)^2} F(\B_{\dot{g}_1}, {\dot{g}_2}^{-1}\circ \dot{g}_1, \B_{\dot{g}_2})(k)=j$.
We see that $(\dbar',\cbar) \forces_{(\B^*)^2} F(\B, {\dot{g}_2}^{-1}, \B_{\dot{g}_2})(i)=j$, and since $\dot{g}_1$ does not appear in this formula, $\cbar\forces_{\B^*} F(\B, \dot{g}_1^{-1}, \B_{\dot{g}_1})(i) = j$ as desired.

Now suppose that $\cbar \forces_{\B^*} F(\B, \dot{g}_1^{-1}, \B_{\dot{g}_1})(i) = j$. Then since $\dbar' \forces_{\B^*} F(\B, \dot{g}_1^{-1}, \B_{\dot{g}_1})(i) = k$, $(\cbar,\dbar') \forces_{(\B^*)^2} F(\B_{\dot{g}_1}, {\dot{g}_2}^{-1}\circ \dot{g}_1, \B_{\dot{g}_2})(k)=j$. Thus $(\cbar,j) \sim (\dbar',k)$ and $(\dbar',k) \sim (\dbar,k)$ by Lemma \ref{lem:unique}. 
\end{proof}

In fact, given any $\Btilde \cong \B$, we can define $\Fra^{\Btilde} \colon F(\Btilde) \to \Dom{\Btilde}{\A}$ by 
\[
\Fra^{\Btilde}(i) \sim (\cbar,j)    \iff    \cbar\forces_{\B^*} F(\Btilde, \dot{g}_1^{-1}, \Bti_{\dot{g}_1})(i) = j.
\]
This gives an isomorphism from $F(\Btilde)$ to $(\Dom{\Btilde}{\A} / \!\sim;~R_0/\!\sim,~R_1/\!\sim,...)$. If $F$ is $\Delta^0_\alpha$, then $\Fra^{\Btilde}$ is $\Delta^0_\alpha(\Bti)$ uniformly in $\Bti$.

\begin{lemma}\label{lem:nat-iso}
There is a natural isomorphism between $F$ and $F_{\I}$.
\end{lemma}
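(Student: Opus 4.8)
The plan is to produce, for each copy $\Btilde$ of $\B$, an isomorphism $\eta_{\Btilde}\colon F(\Btilde)\to F_\I(\Btilde)$ and to verify that these maps form a natural transformation in the sense of Definition~\ref{def:effeq}. The natural candidate is the map $\Fra^{\Btilde}\colon F(\Btilde)\to(\Dom{\Btilde}{\A}/\!\sim;\ R_0/\!\sim,\ldots)$ constructed just above, composed with the canonical bijection $\tau^{\Btilde}$ that the definition of $F_\I$ uses to identify $\Dom{\Btilde}{\A}/\!\sim$ with $\om$. So I would set $\eta_{\Btilde}=(\tau^{\Btilde})^{-1}\circ\Fra^{\Btilde}\colon F(\Btilde)\to F_\I(\Btilde)$. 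That $\eta_{\Btilde}$ is an isomorphism of $\A$-structures is exactly the content of the remark following Lemma~\ref{lem:indep-g} (which itself rests on Lemmas~\ref{lem:exists}, \ref{lem:onto}, \ref{lem:unique}, and \ref{lem:iso}, with the relativized version of Lemma~\ref{lem:iso} giving that $R_i/\!\sim$ pulls back correctly). Since $F$ is Borel (or $\Delta^0_\alpha$) and $\Fra^{\Btilde}$ is $\Delta^0_\alpha(\Btilde)$ uniformly in $\Btilde$, and $\tau^{\Btilde}$ is computable from $\Dom{\Btilde}{\A}$ and $\sim$ (hence of the appropriate complexity), the map $\eta$ is Borel (resp.\ $\Delta^0_\alpha$), giving the ``furthermore'' clause of Theorem~\ref{thm:main}.

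The remaining work is the naturality square: for an isomorphism $h\colon\Btilde\to\Bhat$, we need $F_\I(h)\circ\eta_{\Btilde}=\eta_{\Bhat}\circ F(h)$, i.e.\ that
\[
(\tau^{\Bhat})^{-1}\circ\tilde h\circ\tau^{\Btilde}\circ\eta_{\Btilde}
=\eta_{\Bhat}\circ F(h),
\]
where $\tilde h$ is the induced map on tuples from $\Dom{\Btilde}{\A}$. Cancelling the $\tau$'s, this reduces to showing $\tilde h\circ\Fra^{\Btilde}=\Fra^{\Bhat}\circ F(h)$ as maps $F(\Btilde)\to\Dom{\Bhat}{\A}/\!\sim$. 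I would verify this by unwinding both sides using the forcing characterization in (the $\Btilde$-relativized) Lemma~\ref{lem:indep-g}. Fix $i$ and let $(\cbar,j)\in\Dom{\Bhat}{\A}$; I want to check $\tilde h(\Fra^{\Btilde}(i))\sim(\cbar,j)$ iff $\Fra^{\Bhat}(F(h)(i))\sim(\cbar,j)$. Writing the left side in terms of the tuple $\Fra^{\Btilde}(i)=(\dbar,k)$ pushed forward by $h$, and using that $h$ is an isomorphism $\Btilde\to\Bhat$ so that $h(\dbar)\subset$ (a generic extension compatible with) the relevant generic, the statement $\dbar\Vdash F(\Btilde,\dot g_1^{-1},\Btilde_{\dot g_1})(i)=k$ transfers to $h(\dbar)\Vdash F(\Bhat,\dot g_1^{-1},\Bhat_{\dot g_1})(F(h)(i))=k$ by the functoriality relation $F(\Bhat,\dot g^{-1},\Bhat_{\dot g})\circ F(h)=F(\Btilde,(\dot g\circ h)^{-1}\cdot,\ldots)$ — more precisely, by the fact, forced by the empty condition, that $F$ commutes with composition, applied to the triangle $\Btilde\to\Bhat\to\Bhat_g$. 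Chaining this with Lemma~\ref{lem:unique} to pass between equivalent tuples gives the desired equivalence.

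The step I expect to be the main obstacle is getting the bookkeeping of generics exactly right in that last transfer: $\Fra^{\Btilde}$ is defined via a generic $g_1$ for the copy $\Btilde$ while $\Fra^{\Bhat}$ uses a generic for $\Bhat$, and $h$ converts one to the other, so one must be careful that $g_1\circ h$ (or $h^{-1}\circ g_1$, depending on variance) is still generic for the other copy and that the forced statements about $F$ genuinely translate under this change of variables. The clean way to organize this is to work in a three-copy product forcing $(\B^*)^3$ with generics for $\Btilde$, $\Bhat$, and a common target, note that every instance of the composition law $F(\ldots)\circ F(\ldots)=F(\ldots)$ is forced by the empty condition (as recorded in the Verifications subsection), and then deduce the two-copy statements by Lemma~\ref{lem:restriction}, exactly as was done in Lemmas~\ref{lem:exists} and \ref{lem:onto}. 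Once the naturality square is established, commutativity of the diagram in Definition~\ref{def:effeq} is immediate, and together with the earlier verifications this completes the proof of Theorem~\ref{thm:main}.
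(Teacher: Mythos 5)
Your proposal is essentially the paper's own proof: the same definition $\eta_{\Btilde}=(\tau^{\Btilde})^{-1}\circ\Fra^{\Btilde}$, the same reduction of naturality to the square $\tilde h\circ\Fra^{\Btilde}=\Fra^{\Bhat}\circ F(h)$, and the same core transfer $\cbar\forces F(\Btilde,\dot g^{-1},\Btilde_{\dot g})(i)=i'\Longrightarrow h(\cbar)\forces F(\Bhat,\dot g^{-1},\Bhat_{\dot g})(F(h)(i))=i'$ via the functoriality of $F$. The one place where you overcomplicate is the suggestion to route the transfer through a three-copy product forcing $(\B^*)^3$ over $\B$ and Lemma~\ref{lem:restriction}: that framing doesn't directly apply here, since $\Btilde$, $\Bhat$, and $h$ are fixed (non-generic) objects rather than generic data over $\B$; the paper instead picks a single $\Btilde\oplus\Bhat\oplus h$-generic $g\supset h(\cbar)$, observes that $h^{-1}\circ g\supset\cbar$ is then also generic, applies the forcing characterization over $\Btilde$, and uses functoriality of $F$ on the actual isomorphisms $F(\Bhat,g^{-1},\Bhat_g)\circ F(\Btilde,h,\Bhat)=F(\Btilde,g^{-1}\circ h,\Bhat_g)$ to land the statement over $\Bhat$. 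Your ``more precisely'' aside about the triangle $\Btilde\to\Bhat\to\Bhat_g$ is exactly this computation, so the substance matches.
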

\begin{proof}
Recall that given $\Btilde \cong \B$, we build $F_{\I}(\Btilde)$ out of the interpretation of $\A$ within $\Btilde$ by pulling back through a bijection $\tau\colon \om\to \Dom{\Btilde}{\A} / \!\sim$.
Let us call this bijection $\tau^\Btilde$; it gives a well-defined isomorphism from $F_{\I}(\Btilde)$ to $\Dom{\Btilde}{\A}/\!\sim$.
We define
\[
\eta_{\Btilde} = (\tau^{\Btilde})^{ -1} \circ \Fra^{\Btilde} \colon F(\Btilde) \to F_{\I}(\Btilde).
\]
We need to show that $\eta$ is a natural isomorphism.
It is clear that $\eta_{\Btilde}$ is an isomorphism.
We must prove that, for all $\Btilde,\Bhat \in \Iso{\B}$ and all isomorphisms $h\colon \Btilde \to \Bhat$, the following diagram commutes:
\[
\xymatrix{
F(\Btilde)\ar[d]_{F(h)}  \ar[r]^{\Fra^{\Btilde}}    \ar@/^2pc/[rr]^{\eta_{\Btilde}}       & \Dom{\Btilde}{\A}\ar[d]_{\tilde{h}}        &     F_{\I}(\Btilde)\ar[d]^{F_{\I}(h)}   \ar[l]_{\tau^{\Btilde}} \\
F(\Bhat)   \ar[r]_{\Fra^{\Bhat}}       \ar@/_2pc/[rr]_{\eta_{\Bhat}}          & \Dom{\Bhat}{\A}   & F_{\I}(\Bhat)      \ar[l]^{\tau^{\Bhat}}
}\]
Here, $\tilde{h}\colon \Dom{\Btilde}{\A} \to \Dom{\Bhat}{\A}$ is the restriction of $h\colon \Btilde^{<\om} \to \Bhat^{<\om}$, which is the extension of $h\colon \Btilde \to \Bhat$.

The right-hand square commutes by definition of $F_{\I}(h)$. 
To show that the left-hand square commutes, take $i \in F(\Btilde)$ and $j = F(h)(i) \in F(\Bhat)$.
Let $(\cbar,i')=\Fra^{\Btilde}(i) \in \Dom{\Btilde}{\A}$. We must show that $\Fra^{\Bhat}(j) = h(\cbar, i') = (h(\cbar),i')$.
Since $(\cbar,i')=\Fra^{\Btilde}(i)$,
\[ \cbar\forces_{\B^*} F(\Btilde, \dot{g}^{-1}, \Btilde_{\dot{g}})(i) = i'.\]
We claim that
\[ h(\cbar) \forces_{\B^*} F(\Bhat, \dot{g}^{-1}, \Bhat_{\dot{g}})(j) = i' \]
from which it follows that $\Fra^{\Bhat}(j) = (h(\cbar),i')$.
Let $g \supset h(\cbar)$ be $\Btilde \oplus \Bhat \oplus h$-generic. Then $h^{-1} \circ g \supset \bar{c}$ is also $\Btilde \oplus \Bhat \oplus h$-generic. So
\[ F(\Btilde, g^{-1} \circ h, \Btilde_{h^{-1} \circ g})(i) = i'. \]
Then
\[ F(\Btilde, g^{-1} \circ h, \Btilde_{h^{-1} \circ g}) \circ F(\Bhat, h^{-1}, \Btilde) \circ F(\Btilde, h, \Bhat) (i) = i'.\]
Simplifying this, and using the fact that $F(\Btilde, h, \Bhat)(i) = j$, we get
\[ F(\Bhat, g^{-1}, \Bhat_{g})(j) = i'.\]
Since $g$ was chosen arbitrarily,
\[ h(\cbar) \forces_{\B^*} F(\Bhat, \dot{g}^{-1}, \Bhat_{\dot{g}})(F(h)(i)) = i'.\qedhere\]
\end{proof}

\begin{remark}\label{rmk:comm}
In the proof of the previous lemma, we saw that the following diagram commutes:
\[
\xymatrix{
F(\Btilde)\ar[d]_{F(h)}  \ar[r]^{\Fra^{\Btilde}}     & \Dom{\Btilde}{\A}\ar[d]^{\tilde{h}}   \\
F(\Bhat)   \ar[r]_{\Fra^{\Bhat}}          & \Dom{\Bhat}{\A} 
}\]
We will use this fact later.
\end{remark}

The last thing we need to verify is the complexity claim.

\begin{proposition}
For any $\Delta^0_\alpha$ functor $F\colon \Iso{\B} \to \Iso{\A}$ there is a $\Dec_\alpha$ interpretation, $\I$, of $\A$ within $\B$, such that $F$ is naturally isomorphic to the functor $F_\I$ associated to $\I$.
Furthermore, the isomorphism between $F$ and $F_\I$ can be taken to be $\Delta^0_\alpha$.
\end{proposition}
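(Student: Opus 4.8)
The plan is to track the syntactic complexity of the formulas that define $\I$ in Section~\ref{sec: construction} and feed them through the Definability of Forcing lemma (Lemma~\ref{lem:definability}). As everything relativizes, assume $F$ is a lightface $\Delta^0_\alpha$ operator with $\alpha\geq 1$. Every relation of $\I$ was defined as a set of the form $\{p : p\forces\psi\}$ for $\psi$ a sentence of the \emph{restricted} forcing language $\L'$, built from the atomic statements $F(\B_{\dot{g}_1},\dot{g}_2^{-1}\circ\dot{g}_1,\B_{\dot{g}_2})(i)=j$ and $\bar{j}\in P_i^{F(\B_{\dot{g}})}$. As noted when the forcing language was set up in Section~\ref{sec: construction}, when $F$ is $\Delta^0_\alpha$ these statements can be written as $\Dec_\alpha$ formulas of $\L'$, and by Lemma~\ref{lem:definability} the set $\{p : p\forces\psi\}$ is then $\Sic_\alpha$-definable in $\B$ when $\psi$ is $\Sigma_\alpha$ and $\Pic_\alpha$-definable when $\psi$ is $\Pi_\alpha$, hence $\Dec_\alpha$-definable when $\psi$ is $\Dec_\alpha$. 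Since $\Dom{\B}{\A}$ and $\sim$ are defined directly as such forcing sets, they are $\Dec_\alpha$-definable in $\B$, and this relativizes uniformly.

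The only relation calling for more than bookkeeping is $R_i$. As defined, $R_i$ is obtained by applying a block of existential quantifiers --- over $\bar{c}\in\B^*$ and over $j_1,\dots,j_{p(i)}\in\omega$ --- to a finite conjunction of the $\Dec_\alpha$ relations $(\bar{b}_s,k_s)\sim(\bar{c},j_s)$ and $\bar{c}\forces_{\B^*}\bar{j}\in P_i^{F(\B_{\dot{g}})}$, so by the standard closure properties of the classes $\Sic_\alpha$ it is $\Sic_\alpha$-definable in $\B$. To obtain a $\Pic_\alpha$ definition as well, I would verify that, for tuples from $\Dom{\B}{\A}$, $R_i$ equivalently consists of those $(\bar{b}_1,k_1),\dots,(\bar{b}_{p(i)},k_{p(i)})$ such that
\[
(\forall \bar{c}\in\B^*)(\forall \bar{j})\Bigl(\,{\textstyle\bigwedge_s}\,(\bar{b}_s,k_s)\sim(\bar{c},j_s)\ \rightarrow\ \bar{c}\forces_{\B^*}\bar{j}\in P_i^{F(\B_{\dot{g}})}\Bigr),
\]
whose right-hand side is $\Pic_\alpha$. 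For the implication from this universal form to the given existential one, I would argue as in Lemma~\ref{lem:onto}: take one generic $g$, extract from each $(\bar{b}_s,k_s)\in\Dom{\B}{\A}$ a condition $\bar{c}^{(s)}\subset g$ and a number $j_s$ with $(\bar{b}_s,k_s)\sim(\bar{c}^{(s)},j_s)$, let $\bar{c}$ be the longest of the $\bar{c}^{(s)}$ (lengthening by Lemma~\ref{lem:ext}), and apply the universal hypothesis. For the converse one uses that $\sim$ is an equivalence relation together with the $\sim$-invariance of the statement $\bar{c}\forces_{\B^*}\bar{j}\in P_i^{F(\B_{\dot{g}})}$: if $(\bar{c},j_s)\sim(\bar{c}',j'_s)$ for all $s$ and $\bar{c}'\forces_{\B^*}\bar{j}'\in P_i^{F(\B_{\dot{g}})}$, then for $(\B^*)^2$-generic $(h,h')\supseteq(\bar{c},\bar{c}')$ the map $F(\B_h,(h')^{-1}\circ h,\B_{h'})\colon F(\B_h)\to F(\B_{h'})$ is an isomorphism (as $F$ is a functor) carrying $\bar{j}$ to $\bar{j}'$, so $F(\B_h)\models P_i(\bar{j})$; since $\dot{g}_2$ does not occur, Lemma~\ref{lem:restriction} then gives $\bar{c}\forces_{\B^*}\bar{j}\in P_i^{F(\B_{\dot{g}})}$. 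I expect this equivalence --- essentially the $\sim$-invariance of $P_i$-forcing --- to be the one genuinely delicate point; granting it, $R_i$ is $\Dec_\alpha$-definable and $\I$ is a $\Dec_\alpha$ interpretation.

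Finally, the natural isomorphism between $F$ and $F_\I$ is the map $\eta$ from Lemma~\ref{lem:nat-iso}, $\eta_{\Btilde}=(\tau^{\Btilde})^{-1}\circ\Fra^{\Btilde}$. We already observed that $\Fra^{\Btilde}$ is $\Delta^0_\alpha(\Btilde)$ uniformly in $\Btilde$, and $\tau^{\Btilde}$ is computed uniformly from $\Dom{\Btilde}{\A}$ and $\sim$, which by the first two paragraphs (relativized to $\Btilde$) are $\Delta^0_\alpha(\Btilde)$ uniformly; hence $\tau^{\Btilde}$ and its inverse are $\Delta^0_\alpha(\Btilde)$, and $\eta$ is given by a $\Delta^0_\alpha$ operator, completing the complexity claim.
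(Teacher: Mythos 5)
Your proof is correct and follows the same overall strategy as the paper (unwind the definitions of $\Dom{\B}{\A}$, $\sim$, $R_i$ through Lemma~\ref{lem:definability} and note that $\Fra^{\Btilde}$ is $\Delta^0_\alpha(\Btilde)$ uniformly), but it is more careful on one genuine point that the paper's own proof passes over with ``follows immediately.'' Namely, $R_i$ as written is defined by an \emph{existential} quantifier over $\bar{c}\in\B^*$ and $\bar{j}\in\omega^{p(i)}$ applied to a $\Dec_\alpha$ matrix, so from Lemma~\ref{lem:definability} and closure properties one only gets a $\Sic_\alpha$ definition directly; to conclude that $\I$ is $\Dec_\alpha$ one also needs a $\Pic_\alpha$ definition of $R_i$ on tuples from $\Dom{\B}{\A}$. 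Your dual universal formula and its verification are exactly the right way to supply this. Both directions of the equivalence are sound: the universal-to-existential direction is the same argument as in Lemmas~\ref{lem:onto} and \ref{lem:unique} (one tiny gap you could flag explicitly: before invoking Lemma~\ref{lem:unique} for the longest condition $\bar{c}=g\upto n$ you should check that $(g\upto n, j_s)\in\Dom{\B}{\A}$ for each $s$, which follows from Lemma~\ref{lem:ext} applied to the condition witnessing $(g\upto n_s, j_s)\in\Dom{\B}{\A}$); and the existential-to-universal direction is the $\sim$-invariance of the forcing set for $P_i$, which you correctly derive from functoriality of $F$ plus Lemma~\ref{lem:restriction}. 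The final paragraph about the complexity of $\eta$ matches the paper. In short, your proposal is a more rigorous account of the argument the paper sketches; the extra work on the $\Pic_\alpha$ side of $R_i$ is a real contribution, not a detour.
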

\begin{proof}
That $\I$ is a $\Dec_\alpha$ interpretation follows immediately from Lemma \ref{lem:definability}, the definition of the interpretation, and our remark that if $F$ is a $\Delta^0_\alpha$ functor, then the formulas involved in the definition of the interpretation are all $\Dec_\alpha$. That the isomorphism between $F$ and $F_{\I}$ is $\Delta^0_\alpha$ follows from the fact that determining whether
\[ \cbar\forces_{\B^*} F(\Btilde, \dot{g}^{-1}, \Btilde_{\dot{g}})(i) = j \]
is $\Delta^0_\alpha(\Btilde)$ uniformly in $\Btilde$.
\end{proof}

%%%%%%%%%%%%%%%%%%%%%%%%%%
\subsection{Bi-interpretations}

\begin{proof}[Proof of Theorem \ref{thm:eq to bi}]
Let $F \colon \Iso{\B} \to \Iso{\A}$ and $G \colon \Iso{\A} \to \Iso{\B}$ be a Borel adjoint equivalence of categories, as in the statement of the theorem, with $\eta \colon \id_{\Iso{\B}} \to GF$ and $\epsilon_{\Atilde} \colon \id_{\Iso{\A}} \to FG$. Assume that $\A = F(\B)$.

Let $\I$ and $\J$ be the interpretations using the method described earlier. Recall that just before Lemma \ref{lem:nat-iso} we defined an operator $\Fra$ which, for each $\Btilde$, gives an isomorphism $\Fra^{\Btilde} \colon F(\Btilde) \to \Dom{\Btilde}{\A}$. We get such an operator for each of $F$ and $G$, denoting them by $\Fra^{\Btilde} \colon F(\Btilde) \to \Dom{\Btilde}{\A}$ and $\Gra^{\Atilde} \colon G(\Atilde) \to \Dom{\Atilde}{\B}$.

Consider the isomorphism
\[ \tilde{\Fra}^{\Bhat}\circ\Gra^{F(\Bhat)}\circ\eta_{\Bhat} \colon \Bhat \to \Do_{\B}^{\Do_{\A}^{\Bhat}}.\]
Let $h \colon \Bhat \to \Bti$ be an isomorphism. Then we get maps
\[ \xymatrix{\Bhat\ar[d]_{h}\ar[r]^{\eta_{\Bhat}}&G(F(\Bhat))\ar[d]^{G(F(h))}\ar[r]^{\Gra^{F(\Bhat)}}&\Do_{\B}^{F(\Bhat)}\ar[d]^{\tilde{F}(h)}\ar[r]^{\tilde{\Fra}^{\Bhat}}&\Do_{\B}^{\Do_{\A}^{\Bhat}}\ar[d]_{\tilde{\tilde{h}}}\\\Bti\ar[r]_{\eta_{\Bti}}&G(F(\Bti))\ar[r]_{\Gra^{F(\Bti)}}&\Do_{\B}^{F(\Bti)}\ar[r]_{\tilde{\Fra}^{\Bhat}}&\Do_{\B}^{\Do_{\A}^{\Bti}}}\]
The first square commutes because $\eta$ is a natural isomorphism $\id_{\Iso{\B}} \to GF$, and the remaining two squares commute by Remark \ref{rmk:comm}.

First, take $\Btilde = \Bhat = \B$ and $h$ an automorphism of $\B$. We see from the fact that the diagram above commutes that $\tilde{\Fra}^{\B}\circ\Gra^{F(\B)}\circ\eta_{\B}$ is invariant under automorphisms of $\B$, and so it is $\L_{\omega_1 \omega}$-definable.

Now we claim that if $F$ is $\Delta^0_\alpha$ (or $\bfDelta^0_\alpha$), then $\tilde{\Fra}^{\B}\circ\Gra^{F(\B)}\circ\eta_{\B}$ is relatively intrinsically $\Delta^0_\alpha$ (resp. $\bfDelta^0_\alpha$) and hence definable by a $\Dec_\alpha$ (resp. $\Dei_\alpha$) formula. Consider the commutative diagram above, with $\Btilde = \B$ and $\Bhat$ some other copy of $\B$, with an isomorphism $h \colon \B \to \Bhat$. We see that
\[ \tilde{\Fra}^{\Bhat}\circ\Gra^{F(\Bhat)}\circ\eta_{\Bhat} \colon \Bhat \to \Do_{\B}^{\Do_{\A}^{\Bhat}} \]
is defined within $\Bhat$ by the same formula which defines $\tilde{\Fra}^{\B}\circ\Gra^{F(\B)}\circ\eta_{\B}$ in $\B$. Moreover, since $F$, $G$, and $\eta$ are $\Delta^0_\alpha$ (resp. $\bfDelta^0_\alpha$) operators, $\tilde{\Fra}^{\Bhat}\circ\Gra^{F(\Bhat)}\circ\eta_{\Bhat}$ is $\Delta^0_\alpha$ (resp. $\bfDelta^0_\alpha$) in $\Bhat$. Thus $\tilde{\Fra}^{\B}\circ\Gra^{F(\B)}\circ\eta_{\B}$ is relatively intrinsically $\Delta^0_\alpha$ (resp. $\bfDelta^0_\alpha$). A similar argument works for $\tilde{\Gra}^{\A}\circ\Fra^{G(\A)}\circ\epsilon_{\A}$.

Define $g^\B_\A = {\Fra}^{\B} \colon \A \to \Do_{\A}^{\B}$ and $g^\A_\B = \Gra^{F(\B)} \circ\eta_{\B} \colon \B \to \Do_{\B}^{\A}$. We know that these are isomorphisms. The maps $g^\B_\A$ and $g_\A^\B$ go in the opposite direction as the maps from Definition \ref{defn:inf-biinterpretable}. Letting $f^\B_\A$ and $f_\A^\B$ be their inverses, we get the maps required for a bi-interpretation. We just have to show that the compositions of these maps are the $\L_{\omega_1 \omega}$-definable isomorphisms from the previous paragraph.

We have
\[ \tilde{g}^\B_\A \circ g^\A_\B = \tilde{\Fra}^{\B}\circ\Gra^{F(\B)}\circ\eta_{\B} \colon \B \to \Do_{\B}^{\Do_{\A}^{\B}}. \]
Also, by Remark \ref{rmk:comm} (With $h = \eta_{\B}$), and the fact that $F(\eta_\B) = \epsilon_{F(\B)}$,
\[ \tilde{\eta}_{\B} \circ \Fra^{\B} = \Fra^{G(F(\B))} \circ F(\eta^\B) = \Fra^{G(F(\B))} \circ \epsilon_{F(\B)}. \]
Then, using the fact that $\A = F(\B)$,
\[ \tilde{\Gra}^{\A}\circ\Fra^{G(\A)}\circ\epsilon_{\A} = \tilde{\Gra}^{\A} \circ \tilde{\eta}_{\B} \circ \Fra^{\B} = \tilde{g}^\A_\B \circ g^\B_\A.\] 
\end{proof}

\begin{proof}[Proof of Theorem \ref{thm:bi to eq}]
Let $\I$ and $\J$ be as in the statement of the theorem: $\I$ is an interpretation of $\A$ inside of $\B$ and $\J$ is an interpretation of $\B$ inside of $\A$. From these bi-interpretations we get functors $F = F_\I \colon \Iso{\B} \to \Iso{\A}$ and $G = F_\J \colon \Iso{\A} \to \Iso{\B}$. These functors were defined so that, for $\Bti \in \Iso{\B}$ and $\Ati \in \Iso{\A}$, there are isomorphisms $\tau^\Bti \colon F(\Bti) \to \Do_{\A}^{\Bti}$ and $\rho^\Ati \colon G(\Ati) \to \Do_{\B}^{\Ati}$. Moreover, given an isomorphism $h \colon \Bti \to \Bhat$, $F(h) = {\tau^\Bhat}^{-1} \circ \tilde{h} \circ {\tau^\Bti}$ and given $h \colon \Ati \to \Ahat$, $G(h) =  {\rho^\Ahat}^{-1} \circ \tilde{h} \circ {\rho^\Ati}$.

Recall that $f_\B^\A \circ \tilde{f}_\A^\B$ is $\L_{\omega_1 \omega}$-definable as a subset of $\B^{< \omega}$. In any copy $\Bti$ of $\B$, let $\varphi^\Bti \colon \Do_\B^{\Do_\A^{\Bti}} \to \Bti$ be defined by this formula.  Similarly, let $\psi^\Ati \colon \Do_\A^{\Do_\B^{\Ati}} \to \Ati$ in $\Ati$ be defined by the same formula which defines $f_\A^\B \circ \tilde{f}_\B^\A$ in $\A$. 

Essentially what we want to do is to identify $\Do_\B^{\Do_\A^{\Bti}}$ with $G(F(\Bti))$ and $\Do_\A^{\Do_\B^{\Ati}}$ with $F(G(\Ati))$ and use $\varphi$ and $\psi$ as our natural isomorphisms. We make these identifications using $\tau$ and $\rho$.

Define
\[ \eta_\Bti = \varphi^{\Bti} \circ \tilde{\tau}^{\Bti} \circ \rho^{F(\Bti)} \colon G(F(\Bti)) \to \Bti \]
and
\[ \epsilon_\Ati = \psi^{\Ati} \circ \tilde{\rho}^{\Ati} \circ \tau^{G(\Ati)} \colon F(G(\Ati)) \to \Ati. \]
We begin by showing that $\eta$ is a natural isomorphism between $GF$ and $\id_{\Iso{\B}}$. (Note that it is more convenient here to have $\eta$ and $\epsilon$ mapping in the opposite direction as in Definition \ref{def: adj equiv cat}.)
Let $h \colon \Bti \to \Bhat$ be an isomorphism. Then since $\varphi$ is $\L_{\omega_1 \omega}$-definable,
\begin{align*}
h \circ \eta_\Bti &= h \circ \varphi^{\Bti} \circ \tilde{\tau}^{\Bti} \circ \rho^{F(\Bti)} \\
&= \varphi^{\Bhat} \circ \tilde{\tilde{h}} \circ \tilde{\tau}^{\Bti} \circ \rho^{F(\Bti)}. % since \varphi is definable
\end{align*}
By definition of $F$ and $G$,
\begin{align*}
\varphi^{\Bhat} \circ \tilde{\tilde{h}} \circ \tilde{\tau}^{\Bti} \circ \rho^{F(\Bti)} &= \varphi^{\Bhat} \circ \tilde{\tau}^{\Bhat} \circ \widetilde{F(h)} \circ \rho^{F(\Bti)} \\
&= \varphi^{\Bhat} \circ \tilde{\tau}^{\Bhat} \circ \rho^{F(\Bhat)} \circ G(F(h)) \\ % by definition of G
&= \eta_\Bhat \circ G(F(h)).
\end{align*}
Similarly, $\epsilon$ is a natural isomorphism between $FG$ and $\id_{\Iso{\A}}$.
Thus we have shown that $F$, $G$, $\eta$, and $\epsilon$ give an equivalence of categories.

Now we must show that this is equivalence is an adjoint equivalence by showing that given $\Bti$ and $\Ati$, $F(\eta_{\Bti}) = \epsilon_{F(\Bti)}$ and $G(\epsilon_{\Ati}) = \eta_{G(\Ati)}$. To begin, we prove two claims which give identities of compositions of isomorphisms.

\begin{claim}
$\tilde{\varphi}^{\B} \circ \tilde{\tilde{\tau}}^{\B} = \tau^{\B} \circ \psi^{F(\B)}$
\end{claim}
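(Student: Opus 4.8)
The plan is to exploit that $\psi$ is defined, in every copy of $\A$, by one and the same $\L_{\omega_1\omega}$-formula — so that it is transported along any isomorphism — together with the fact that $\tau^{\B}$ exhibits $F(\B)=F_{\I}(\B)$ as the ($\sim$-collapsed) $\I$-interpreted copy of $\A$ sitting inside $\B$. Concretely, I would set $h:=f^{\B}_{\A}\circ\tau^{\B}\colon F(\B)\to\A$. This is an isomorphism of structures in the language of $\A$: by the definition of $F_{\I}$ given after Definition~\ref{def:functors}, $\tau^{\B}$ is an isomorphism from $F(\B)$ onto the interpreted structure $\Do_{\A}^{\B}$, and $f^{\B}_{\A}$ is the isomorphism from $\Do_{\A}^{\B}$ onto $\A$ supplied by the interpretation $\I$ (Definition~\ref{def: inf int}).

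Since $\psi^{F(\B)}$ is defined in $F(\B)$ by the same formula that defines $\psi^{\A}=f^{\B}_{\A}\circ\tilde f^{\A}_{\B}$ in $\A$ (Definition~\ref{defn:inf-biinterpretable}) and $h$ is an isomorphism, $h$ transports $\psi^{F(\B)}$ to $\psi^{\A}$. Its domain $\Do_{\A}^{\Do_{\B}^{F(\B)}}$ is two interpretation levels deep while its codomain is a single sort, so this transport reads
\[ h\circ\psi^{F(\B)}\;=\;\psi^{\A}\circ\tilde{\tilde h}, \]
where $\tilde{\tilde h}\colon\Do_{\A}^{\Do_{\B}^{F(\B)}}\to\Do_{\A}^{\Do_{\B}^{\A}}$ is the second iterate of the canonical extension of $h$ to tuples. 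Substituting $h=f^{\B}_{\A}\circ\tau^{\B}$ (so $\tilde{\tilde h}=\tilde{\tilde{f^{\B}_{\A}}}\circ\tilde{\tilde{\tau^{\B}}}$, using that $g\mapsto\tilde g$ is functorial) and $\psi^{\A}=f^{\B}_{\A}\circ\tilde f^{\A}_{\B}$, and then cancelling the injection $f^{\B}_{\A}$ on the left, I obtain
\[ \tau^{\B}\circ\psi^{F(\B)}\;=\;\tilde f^{\A}_{\B}\circ\tilde{\tilde{f^{\B}_{\A}}}\circ\tilde{\tilde{\tau^{\B}}}. \]
Finally, since $\varphi^{\B}=f^{\A}_{\B}\circ\tilde f^{\B}_{\A}$, functoriality of the extension gives $\tilde\varphi^{\B}=\tilde f^{\A}_{\B}\circ\widetilde{\tilde f^{\B}_{\A}}=\tilde f^{\A}_{\B}\circ\tilde{\tilde{f^{\B}_{\A}}}$, so the right-hand side above is exactly $\tilde\varphi^{\B}\circ\tilde{\tilde{\tau^{\B}}}$, which is the claim.

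The only genuine obstacle is the bookkeeping of the tuple-extension operation $g\mapsto\tilde g$: one must check that a relation living $k$ interpretation levels deep is transported along an isomorphism by the $k$-th iterate of its tuple-extension, that $\widetilde{g_1\circ g_2}=\tilde g_1\circ\tilde g_2$ at each level, and that all the maps involved respect the relevant equivalence relations $\sim$ — which they do, because by Definition~\ref{def: inf int} every piece of the interpretation data is closed under $\sim$. Modulo this, the argument is just the formal chain of identities displayed above, and the companion claim obtained by exchanging the roles of $\A$ and $\B$ (and of $\I$ and $\J$) is proved in exactly the same way.
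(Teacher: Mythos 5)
Your proof is correct and rests on exactly the same engine as the paper's: $\psi$ is cut out in every copy of $\A$ by one fixed $\L_{\omega_1\omega}$-formula, hence transports along any isomorphism of copies of $\A$, with the two levels of interpretation depth accounted for by the iterated tuple-extension $\tilde{\tilde h}$. The only real difference is organizational: the paper introduces an arbitrary isomorphism $h\colon\A\to F(\B)$, transports $\psi^{F(\B)}$ to $\psi^{\A}$ along it, and then invokes invariance of $\psi^{\A}$ a second time under the automorphism $\alpha=f^{\B}_{\A}\circ\tau^{\B}\circ h$ of $\A$, whereas you transport once along the specific isomorphism $h=f^{\B}_{\A}\circ\tau^{\B}\colon F(\B)\to\A$, for which the corresponding $\alpha$ collapses to the identity; this eliminates the second invariance step and the auxiliary $\alpha$ entirely. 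Your substitution-and-cancellation chain is right, and the bookkeeping issues you flag (functoriality of $g\mapsto\tilde g$, compatibility with $\sim$) are exactly the ones that need to hold and do hold, so the argument is sound and is, if anything, a modest simplification of the paper's.
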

\begin{proof}
Let $h \colon \A \to F(\B)$ be an isomorphism. Then since $h^{-1} \circ  \psi^{F(\B)} \circ \tilde{\tilde{h}} = \psi^{\A}$, we just need to show that
\[ \tilde{f}_\B^\A \circ \tilde{\tilde{f}}_\A^\B \circ \tilde{\tilde{\tau}}^{\B} \circ \tilde{\tilde{h}} = \tau^{\B} \circ h \circ f_\A^\B \circ \tilde{f}_\B^\A. \]
Consider the isomorphisms $f_\A^\B \colon \Do_\A^\B \to \A$ and $\tau^{\B} \circ h \colon \A \to \Do_\B^\A$. Let $\alpha = f_\A^\B \circ \tau^{\B} \circ h$. Then $\alpha$ is an automorphism of $\A$. Since $f_\A^\B \circ \tilde{f}_\B^\A$ is definable, we have
\begin{align*}
\tilde{f}_\B^\A \circ \tilde{\tilde{f}}_\A^\B \circ {\tilde{\tilde{f}}_\A^\B}^{-1} \circ \tilde{\tilde{\alpha}} &= {f_\A^\B}^{-1} \circ f_\A^\B \circ \tilde{f}_\B^\A \circ \tilde{\tilde{\alpha}} \\
\tilde{f}_\B^\A \circ \tilde{\tilde{f}}_\A^\B \circ {\tilde{\tilde{f}}_\A^\B}^{-1} \circ \tilde{\tilde{\alpha}} &= {f_\A^\B}^{-1} \circ \alpha \circ f_\A^\B \circ \tilde{f}_\B^\A \\
\tilde{f}_\B^\A \circ \tilde{\tilde{f}}_\A^\B \circ \tilde{\tilde{\tau}}^{\B} \circ \tilde{\tilde{h}} &= \tau^{\B} \circ h \circ f_\A^\B \circ \tilde{f}_\B^\A
\end{align*}
as desired.
\end{proof}

The next claim replaces $\B$ in the claim above by an arbitrary copy $\Btilde$ of $\B$.

\begin{claim}
$\tilde{\varphi}^{\Bti} \circ \tilde{\tilde{\tau}}^{\Bti} = \tau^{\Bti} \circ \psi^{F(\Bti)}$.
\end{claim}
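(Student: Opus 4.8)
The plan is to deduce this from the previous claim by ``transfer along an isomorphism'', the same device used to upgrade a statement about the fixed copy $\B$ to an intrinsic one elsewhere in this section (compare the step from Remark~\ref{rmk:comm} to Lemma~\ref{lem:indep-g}). Fix an isomorphism $h\colon\B\to\Bti$. The point is that each of the three isomorphisms $\varphi^{\Bti}$, $\tau^{\Bti}$, $\psi^{F(\Bti)}$ is ``natural in $h$'': it is obtained from its $\B$-counterpart by conjugating with the appropriate iterated induced map of $h$. Substituting these relations into the two sides of the desired identity and then quoting the previous claim (the version for $\B$) finishes the argument.

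Concretely, I would record three naturality facts. First, $\varphi$ is defined in every copy of $\B$ by one fixed $\L_{\omega_1\omega}$-formula (the formula defining $f^\A_\B\circ\tilde f^\B_\A$, as in Definition~\ref{defn:inf-biinterpretable}); hence $h\circ\varphi^{\B}=\varphi^{\Bti}\circ\tilde{\tilde h}$, and inducing once more on tuples, $\tilde h\circ\tilde\varphi^{\B}=\tilde\varphi^{\Bti}\circ\tilde{\tilde{\tilde h}}$. Second, the definition of $F=F_\I$ gives $\tau^{\Bti}\circ F(h)=\tilde h\circ\tau^{\B}$; applying the twice-iterated induced-map operation, $\tilde{\tilde\tau}^{\Bti}\circ\widetilde{\widetilde{F(h)}}=\tilde{\tilde{\tilde h}}\circ\tilde{\tilde\tau}^{\B}$. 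Third, $\psi$ is likewise defined in every copy of $\A$ by one fixed $\L_{\omega_1\omega}$-formula, and $F(h)\colon\A=F(\B)\to F(\Bti)$ is an isomorphism, so $F(h)\circ\psi^{F(\B)}=\psi^{F(\Bti)}\circ\widetilde{\widetilde{F(h)}}$. Chaining these, together with the previous claim for the middle equality,
\[
\tilde\varphi^{\Bti}\circ\tilde{\tilde\tau}^{\Bti}\circ\widetilde{\widetilde{F(h)}}
=\tilde h\circ\tilde\varphi^{\B}\circ\tilde{\tilde\tau}^{\B}
=\tilde h\circ\tau^{\B}\circ\psi^{F(\B)}
=\tau^{\Bti}\circ\psi^{F(\Bti)}\circ\widetilde{\widetilde{F(h)}},
\]
and cancelling the bijection $\widetilde{\widetilde{F(h)}}$ on the right yields $\tilde\varphi^{\Bti}\circ\tilde{\tilde\tau}^{\Bti}=\tau^{\Bti}\circ\psi^{F(\Bti)}$.

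The only real work here is bookkeeping, which I expect to be the main (though minor) obstacle: the maps in play act on domains nested two or three levels deep ($\Do_\B^{F(\Bti)}$, $\Do_\B^{\Do_\A^{\Bti}}$, $\Do_\A^{\Do_\B^{\Do_\A^{\Bti}}}$, etc.), so one must keep careful track of which iterate of the $\tilde{(\cdot)}$ operation is meant at each stage and check that all domains and codomains match up so the displayed composites are legitimate. There is no new idea beyond the previous claim: this is that claim with one further ``conjugate by $h$'' wrapped around it, exactly parallel to the way the previous claim was itself reduced, via an isomorphism $\A\to F(\B)$, to a computation inside $\A$ using the definability of $f^\B_\A\circ\tilde f^\A_\B$.
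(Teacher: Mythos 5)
Your proof is correct and is essentially the paper's argument: the paper likewise records the three naturality relations (with $h$ chosen in the opposite direction, $\Bti\to\B$), composes the previous claim with $\widetilde{\widetilde{F(h)}}$ on the right, and then cancels. The bookkeeping you flag is indeed the only content, and your domains and iterations of $\tilde{(\cdot)}$ all match.
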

\begin{proof}
Let $h \colon \Bti \to \B$ be an isomorphism. By definition of $F$, we have
\[ \tilde{h} \circ \tau^{\Bti}  = \tau^{\B} \circ F(h). \]
Since $\varphi$ and $\psi$ are given by $\L_{\omega_1 \omega}$ definitions,
\[ h \circ \varphi^{\Bti} = \varphi^{\B} \circ \tilde{\tilde{h}} \text{ and } F(h) \circ \psi^{F(\Bti)}  = \psi^{F(\B)} \circ \widetilde{\widetilde{F(h)}}. \]
From the previous claim, we get
\[ \tilde{\varphi}^{\B} \circ \tilde{\tilde{\tau}}^{\B} = \tau^{\B} \circ \psi^{F(\B)}. \]
Composing both sides with $\tilde{\tilde{F(h)}}$ on the right, we get
\begin{align*}
\tilde{\varphi}^{\B} \circ \tilde{\tilde{\tau}}^{\B} \circ \tilde{\tilde{F(h)}} = \tau^{\B} \circ \psi^{F(\B)} \circ \widetilde{\widetilde{F(h)}} \\
\tilde{\varphi}^{\B} \circ \tilde{\tilde{\tilde{h}}} \circ \tilde{\tilde{\tau}}^{\Bti} = \tau^{\B} \circ F(h) \circ \psi^{F(\Bti)} \\
\tilde{h} \circ \tilde{\varphi}^{\Bti} \circ \tilde{\tilde{\tau}}^{\Bti} = \tilde{h} \circ  \tau^{\Bti} \circ \psi^{F(\Bti)}
\end{align*}
Applying $\tilde{h}^{-1}$ to both sides, we complete the claim.
\end{proof}

To see that $F(\eta_{\Bti}) = \epsilon_{F(\Bti)}$, note that
\[ F(\eta_\Bti) = {\tau^{\Bti}}^{-1} \circ \tilde{\varphi}^{\Bti} \circ \tilde{\tilde{\tau}}^{\Bti} \circ \tilde{\rho}^{F(\Bti)} \circ \tau^{G(F(\Bti))} \colon F(G(F(\Bti))) \to F(\Bti) \]
and
\[ \epsilon_{F(\Bti)} = \psi^{F(\Bti)} \circ \tilde{\rho}^{F(\Bti)} \circ \tau^{G(F(\Bti))} \colon F(G(F(\Bti))) \to F(\Bti). \]
Then it follows from the previous claim that these are equal. Similarly, $G(\epsilon_{\Ati}) = \eta_{G(\Ati)}$.
\end{proof}

%%%%%%%%%%%%%%%%%%%%%%%%%%%%%%%%%%%%%%%%%%%%%%%%%%%%%%%%%%%%%%%%%%%%%%%%%%%%%%%%%%%%%%%%%%%%%%%%%%%%%%%%%%%%%%%%%%%%%%%%%%%%%%%%%%%%%%%%%%%%%%%%%%%%%%%%%%%%%%%%%%%%%%%%%%%%%%%%%%%%%%%%%%%%%%%%%%%%%%%%%%%%%%%%%%%%%%%%%%%%%%%%%%%%%%%%%%%%%%%%%%%%%%%%%%%%%%%%%%%%%%%%%%%%%%%%%%%%%%%%%%%%%%%%%%%%%%%%%%%%%%%%%%%%
\section{Indiscernibles}

In this section we prove Theorem \ref{thm:indiscernibles}, which says that, for a structure $\A$, there is a continuous homomorphism from $\Aut(\A)$ onto $S_\infty$ if and only if $\A$ has an infinite definable set of absolutely indiscernible definable equivalence classes.

\begin{proof}[Proof of Theorem \ref{thm:indiscernibles}]
The direction (2)$\Rightarrow$(1) is easy to see. For the other direction, suppose that there is a continuous homomorphism $H$ from $\Aut(\A)$ onto $S_\infty$. Let $\B$ be the trivial structure with a countable domain and no relations; then $\Aut(\B) = S_\infty$. By Theorem \ref{homo-to-interp}, there is an interpretation $\I$ of $\B$ in $\A$ such that $H = G_\I\upto \Aut(\A)$.

Let $D = \Do_\B^\A \subseteq \A^{< \omega}$, and let $E$ be the relation $\sim$. Let $h$ be a permutation of the $E$-equivalence classes. Then $h$ induces an automorphism $f_\B^\A \circ h \circ {f_\B^\A }^{-1}$ of $\B$. Then, since $H$ is onto, there is an automorphism $g$ of $\A$ with $H(g) = f_\B^\A \circ h \circ {f_\B^\A }^{-1}$. But then $G_\I(g) = H(g) = f_\B^\A \circ h \circ {f_\B^\A }^{-1}$, and so, by definition of $G_\I$, $g$ extends $h$.

Above, we chose $D \subseteq \A^{< \omega}$; we need to choose $D \subseteq \A^{n}$ for some $n$. It suffices to show that for some $n$, $D' = D \cap \A^{n}$ and $E' = E \cap (D' \times D')$ have infinitely many equivalence classes. Let $n$ be such that $D \cap \A^{n}$ is non-empty:  say it contains some element $\bar{a}$. Let $x \in \B$ be $f_\B^\A(\bar{a})$. Let $y_1,y_2,\ldots$ be infinitely many elements of $\B$ distinct from $x$, and let $h_1,h_2,\ldots$ be automorphisms of $\B$ such that $h_i(x) = y_i$. Then since $H$ is onto, there are automorphisms $g_i$ of $\A$ with $H(g_i) = h_i$. Then $h_i = G_\I(g_i) = f_\B^\A \circ g_i \circ {f_\B^\A}^{-1}$. Since $h_i(x) = y_i$ and $f_\B^\A(\bar{a}) = x$, $f_\B^\A \circ g_i(\bar{a}) = y_i$. Thus $g_i(\bar{a})$ must be in a different $E$-equivalence class from $\bar{a}$, and also from $g_j(\bar{a})$ for $i \neq j$; but since $\bar{a} \in \A^{n}$, $g_i(\bar{a}) \in \A^n$. Thus there are infinitely many $E$-equivalence classes in $D \cap \A^{n}$.
\end{proof}

A similar argument proves the following theorem.

\begin{theorem}\label{thm: order indiscernibles}
Let $\A$ be a countable structure. 
The following are equivalent:
\begin{enumerate}
\item There is a continuous homomorphism from $\Aut(\A)$ onto $\Aut(\mathbb{Q},<)$.
\item There is an $n$, an $\L_{\om_1 \om}$-definable $D\subset A^n$, an $\L_{\om_1 \om}$-definable equivalence relation $E\subset D^2$ with infinitely many equivalence classes, and an $\L_{\om_1 \om}$-definable order, such that the $E$-equivalence classes are order indiscernible, in the sense that each order-preserving permutation of the $E$-equivalence classes extends to an automorphism of $\A$.
\end{enumerate}
\end{theorem}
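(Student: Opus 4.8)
The plan is to run the proof of Theorem~\ref{thm:indiscernibles} almost verbatim, replacing the relationless countable structure there (whose automorphism group is $S_\infty$) by $\B=(\mathbb{Q},<)$, so that $\Aut(\B)=\Aut(\mathbb{Q},<)$, and replacing the transitivity of $S_\infty$ on its domain by the transitivity of $\Aut(\mathbb{Q},<)$ on $\mathbb{Q}$.

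For the direction $(2)\Rightarrow(1)$: given $n$, the $\L_{\om_1\om}$-definable $D\subseteq A^n$, the $\L_{\om_1\om}$-definable equivalence relation $E$, and the $\L_{\om_1\om}$-definable order $<^*$ on the $E$-classes, observe that the properties ``there is a class of $D/E$ strictly $<^*$-below $\bar x$'' and ``there is a class of $D/E$ strictly $<^*$-between $\bar x$ and $\bar y$'' are $\L_{\om_1\om}$-definable and $E$-invariant. Since the classes are order indiscernible, no such property can distinguish two single classes or two $<^*$-increasing pairs; using that there are at least three classes, this forces $(D/E,<^*)$ to have no least and no greatest element and to be dense, hence to be isomorphic to $(\mathbb{Q},<)$. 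Transporting across this isomorphism, the map sending $g\in\Aut(\A)$ to the permutation of $D/E$ it induces is a continuous homomorphism $\Aut(\A)\to\Aut(\mathbb{Q},<)$, and it is onto precisely because every $<^*$-preserving permutation of the classes extends to an automorphism of $\A$.

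For $(1)\Rightarrow(2)$: let $H\colon\Aut(\A)\to\Aut(\mathbb{Q},<)$ be continuous and onto, set $\B=(\mathbb{Q},<)$, and apply Theorem~\ref{homo-to-interp} to obtain an infinitary interpretation $\I$ of $\B$ in $\A$ with $H=G_\I\upto \Aut(\A)$. Put $D=\Do_\B^\A\subseteq A^{<\om}$, $E={\sim}$, and let $<^*$ be the relation of $\I$ interpreting the order symbol of $\B$, read off on $E$-classes; these are $\L_{\om_1\om}$-definable and $f_\B^\A\colon(D/E,<^*)\cong(\mathbb{Q},<)$. For a $<^*$-preserving permutation $h$ of $D/E$, the map ${f_\B^\A}\circ h\circ{f_\B^\A}^{-1}$ is an automorphism of $\B$, so by surjectivity of $H$ it equals $H(g)=G_\I(g)$ for some $g\in\Aut(\A)$; by the definition of $G_\I$ this $g$ extends $h$, so the $E$-classes are order indiscernible. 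To cut $D$ down to some $A^n$, choose $n$ with $D\cap A^n\neq\emptyset$; since every $g\in\Aut(\A)$ preserves $D$, $E$ and $A^n$ setwise and acts on $\mathbb{Q}$ as $H(g)$, the set $f_\B^\A\big[(D\cap A^n)/E\big]$ is a nonempty $\Aut(\mathbb{Q},<)$-invariant subset of $\mathbb{Q}$, hence all of $\mathbb{Q}$ by transitivity; as $f_\B^\A$ is a bijection $D/E\to\mathbb{Q}$ this gives $(D\cap A^n)/E=D/E$, and $D\cap A^n$ with $E$ and $<^*$ restricted witnesses $(2)$.

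The only point of real content beyond Theorem~\ref{thm:indiscernibles} is the one made in $(2)\Rightarrow(1)$: that order indiscernibility of infinitely many classes, together with the $\L_{\om_1\om}$-definability of the order on them, pins that order down to $(\mathbb{Q},<)$, so that its automorphism group is genuinely $\Aut(\mathbb{Q},<)$. Everything else is a routine transcription of the earlier proof, with $(\mathbb{Q},<)$ playing the role of the structureless countable set and transitivity of $\Aut(\mathbb{Q},<)$ on $\mathbb{Q}$ playing the role of transitivity of $S_\infty$.
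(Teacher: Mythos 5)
Your $(1)\Rightarrow(2)$ direction is a correct transcription of the proof of Theorem~\ref{thm:indiscernibles}, with transitivity of $\Aut(\mathbb{Q},<)$ on $\mathbb{Q}$ correctly replacing transitivity of $S_\infty$ on $\omega$ in the step that cuts $D$ down to $D\cap A^n$.

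The problem is in $(2)\Rightarrow(1)$, precisely at the step you flag as ``the only point of real content.'' You claim that order indiscernibility forces the definable properties ``there is a class $<^*$-below $\bar x$'' and ``there is a class $<^*$-between $\bar x$ and $\bar y$'' to be constant on classes and on increasing pairs. But the theorem defines ``order indiscernible'' only as: every $<^*$-preserving permutation of $D/E$ extends to an automorphism of $\A$. That hypothesis only tells you a definable, $E$-invariant property is constant on \emph{orbits of $\Aut(D/E,<^*)$}, and if $(D/E,<^*)$ is not already dense without endpoints, $\Aut(D/E,<^*)$ need not act transitively on increasing pairs. Concretely, take $\A=(\mathbb{Z},<)$, $D=A$, $E$ equality, $<^*$ the order. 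Then every $<^*$-preserving permutation of $D/E$ is literally an automorphism of $\A$, so $(2)$ holds as written, yet ``there is a class strictly $<^*$-between $\bar x$ and $\bar y$'' distinguishes $(0,1)$ from $(0,2)$ and these pairs lie in different $\Aut(\mathbb{Z},<)$-orbits, so no contradiction with your hypothesis; the linear order remains $(\mathbb{Z},<)$, not $(\mathbb{Q},<)$. Worse, this is a genuine counterexample to the literal implication $(2)\Rightarrow(1)$, since $\Aut(\mathbb{Z},<)\cong\mathbb{Z}$ is abelian and cannot surject onto $\Aut(\mathbb{Q},<)$.

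So your argument cannot be repaired as stated; the issue is that statement $(2)$ is loosely worded. The intended meaning of ``the $E$-classes are order indiscernible'' in the paper is the standard model-theoretic one: any two $<^*$-increasing tuples of $E$-classes of the same length lie in the same $\Aut(\A)$-orbit (or equivalently, given that every order-preserving permutation extends, that $(D/E,<^*)$ is dense without endpoints). With that reading your density/no-endpoints argument does go through, and moreover the paper's $(1)\Rightarrow(2)$ in fact produces $(D/E,<^*)\cong(\mathbb{Q},<)$ outright via $f_\B^\A$, so the circle closes. You should state explicitly that $(D/E,<^*)$ must be a dense linear order without endpoints as part of reading $(2)$, rather than attempting to derive it from the ``every order-preserving permutation extends'' clause alone.
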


By considering isomorphisms, we also get:

\begin{theorem}
\label{thm:indiscernibleiso}
Let $\A$ be a countable structure. 
The following are equivalent:
\begin{enumerate}
\item There is a continuous isomorphism between $\Aut(\A)$ and $S_\infty$.
%\footnote{inverse of a continuous isomorphism is continuous? YES, in this case bacause it's Borel (because it's graph is), and hence continuous because is homomorphism between Polish groups.}
\item There is an $n$, an $\L_{\om_1 \om}$-definable $D\subset A^n$, and an $\L_{\om_1 \om}$-definable equivalence relation $E\subset D^2$ with infinitely many equivalence classes and such that the $E$-equivalence classes are absolutely indiscernible, and every other element is definable from this set. In other words, if we add relations naming each of these equivalence classes, then every element of the structure is $\L_{\om_1 \om}$-definable.
\end{enumerate}
\end{theorem}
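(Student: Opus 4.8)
The plan is to adapt the proof of Theorem~\ref{thm:indiscernibles}: surjectivity of the isomorphism again yields absolute indiscernibility, continuity is what confines the configuration to a single arity, and the new clause of (2) --- that naming the $E$-classes makes every element definable --- is exactly the contribution of \emph{injectivity}.

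For (2)$\Rightarrow$(1): given such $D\subseteq A^n$ and $E$, let $\A^+$ be $\A$ expanded by a new $n$-ary relation naming each $E$-class. Since $D$ and $E$ are $\L_{\om_1\om}$-definable they are $\Aut(\A)$-invariant, so each $g\in\Aut(\A)$ permutes $D/E$; this gives a homomorphism $\Phi\colon\Aut(\A)\to\mathrm{Sym}(D/E)$. One checks $\Phi$ is continuous (a representative $n$-tuple of a class determines the image of that class), it is onto by absolute indiscernibility, and its kernel is $\Aut(\A^+)$ --- which is trivial, since in $\A^+$ every element is $\L_{\om_1\om}$-definable and hence fixed by all automorphisms. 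Fixing a bijection $D/E\cong\om$ makes $\Phi$ a continuous bijective homomorphism $\Aut(\A)\to S_\infty$, whose inverse is then automatically continuous (\cite[Exercise 2.3.5]{GaoBook}), giving (1).

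For (1)$\Rightarrow$(2): let $H\colon\Aut(\A)\to S_\infty$ be a continuous isomorphism and let $S_\infty$ act on $\om$ in the usual way. Transporting the action along $H$ makes $\Aut(\A)$ act continuously and transitively on a copy $X$ of $\om$, the induced map $\Aut(\A)\to\mathrm{Sym}(X)$ being $H$ itself (after identifying $X$ with $\om$). Fix $x_0\in X$; its stabilizer is an open subgroup of $\Aut(\A)$ by continuity of $H$, hence contains the pointwise stabilizer of some finite tuple $\bar a\in A^n$. Put $D:=\Aut(\A)\cdot\bar a\subseteq A^n$; this orbit is $\Aut(\A)$-invariant, hence $\L_{\om_1\om}$-definable. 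Define $\phi\colon D\to X$ by $\phi(g\bar a)=g\cdot x_0$: this is well-defined because $\Aut(\A)_{\bar a}$ fixes $x_0$, it is onto by transitivity, and it is $\Aut(\A)$-equivariant. Let $E$ be its kernel equivalence relation --- $\L_{\om_1\om}$-definable, with classes in bijection with $X$, so infinitely many. Surjectivity of $H$ shows every permutation of $D/E\cong X$ is induced by some automorphism, so the $E$-classes are absolutely indiscernible. Finally, expanding $\A$ by naming the classes $\phi^{-1}(x)$, any automorphism $g$ of the expansion satisfies $g\cdot\phi^{-1}(x)=\phi^{-1}(x)$ for all $x$, which by equivariance means $H(g)$ fixes $X$ pointwise, i.e.\ $g\in\ker H=\{\id\}$; so the expansion has trivial automorphism group and, being a countable structure in a countable signature, has every element $\L_{\om_1\om}$-definable (\cite{Kueker68,Makkai69}).

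The step I expect to need the most care is the reduction to a single arity $A^n$. The configuration one obtains most directly --- e.g.\ by running the proof of Theorem~\ref{thm:indiscernibles} and applying Theorem~\ref{homo-to-interp} to the trivial structure --- lives in $A^{<\om}$, and the ``everything is definable'' clause of (2) need not survive a naive intersection with $A^n$. The argument above avoids this by building $D$ as the $\Aut(\A)$-orbit of a single finite tuple, which is precisely where continuity of $H$ is exploited: open subgroups of an automorphism group contain pointwise stabilizers of finite tuples.
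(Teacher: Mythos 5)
Your proof is correct, but it takes a genuinely different route from the one the paper has in mind. The paper offers no explicit proof of Theorem~\ref{thm:indiscernibleiso}; it follows ``By considering isomorphisms, we also get:'' after Theorem~\ref{thm:indiscernibles}, whose proof runs through the heavy machinery --- apply Theorem~\ref{homo-to-interp} (or, for the isomorphism version, Theorem~\ref{thm:inf-bi-iso}) with $\B$ the trivial structure to obtain an interpretation with $D=\Do_\B^\A\subseteq\A^{<\om}$ and $E=\mathop{\sim}$, then restrict to a single arity by a separate cardinality argument. Following that template here, one would need to check that the ``every element is definable'' clause survives the intersection with $A^n$ (it does, since the argument in the proof of Theorem~\ref{thm:indiscernibles} actually shows every $E$-class meets $A^n$, so one recovers triviality of $\Aut(\A^+)$ from $\ker G_\I=\{\id\}$), but this is somewhat delicate.

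Your (1)$\Rightarrow$(2) is more elementary and self-contained: you bypass Theorems~\ref{homo-to-interp} and \ref{thm:main} entirely, instead using only the topological fact that the stabilizer $H^{-1}(\mathrm{Stab}_{S_\infty}(x_0))$ is open, hence contains a pointwise stabilizer $\Aut(\A)_{\bar a}$, and then taking $D$ to be the orbit of $\bar a$ and $E$ the fibers of the equivariant surjection $g\bar a\mapsto g\cdot x_0$. This lands in a single arity $A^n$ immediately, and the $\L_{\om_1\om}$-definability of $D$ and $E$ comes directly from invariance via Kueker--Makkai rather than from the interpretation. Both routes ultimately invoke Kueker--Makkai at some point, but yours needs nothing else; the trade-off is that the paper's route automatically produces a concrete interpretation whose complexity can be tracked (as in the ``rank'' remarks), whereas your argument proves only the bare existence statement of the theorem. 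Your (2)$\Rightarrow$(1), with the kernel of $\Phi$ identified as $\Aut(\A^+)$ and the open-mapping appeal to \cite[Exercise 2.3.5]{GaoBook} for the inverse, is exactly the expected argument. One minor bookkeeping point worth spelling out: the expansion $\A^+$ adds countably many $n$-ary predicates (one per $E$-class), so it remains a countable structure in a countable signature, which is needed for the Kueker--Makkai appeal at the end of (1)$\Rightarrow$(2).
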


\begin{theorem}
\label{thm:orderindiscernibleiso}
Let $\A$ be a countable structure. 
The following are equivalent:
\begin{enumerate}
\item There is a continuous isomorphism between $\Aut(\A)$ and $\Aut(\mathbb{Q},<)$.
\item There is an $n$, an $\L_{\om_1 \om}$-definable $D\subset A^n$, an $\L_{\om_1 \om}$-definable equivalence relation $E\subset D^2$ with infinitely many equivalence classes, and an $\L_{\om_1 \om}$-definable order, such that the $E$-equivalence classes are order indiscernible, and every other element is definable from this set.
\end{enumerate}
\end{theorem}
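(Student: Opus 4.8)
The plan is to imitate the proof of Theorem~\ref{thm:indiscernibles} (and its ordered analogue Theorem~\ref{thm: order indiscernibles}) with the trivial structure replaced by $(\mathbb{Q},<)$, and then to extract the extra clause --- that every element of $\A$ becomes definable once the $E$-classes are named --- from the injectivity of the isomorphism, exactly as Theorem~\ref{thm:indiscernibleiso} refines Theorem~\ref{thm:indiscernibles}. For $(1)\Rightarrow(2)$, let $H\colon\Aut(\A)\to\Aut(\mathbb{Q},<)$ be a continuous isomorphism and write $\B=(\mathbb{Q},<)$. Since $H$ is in particular a continuous homomorphism from $\Aut(\A)$ to $\Aut(\B)$, Theorem~\ref{homo-to-interp} gives an interpretation $\I$ of $\B$ in $\A$ with $G_\I=H$. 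I would then let $D_0\subseteq A^{<\om}$ be the domain of $\I$, let $\sim$ be its equivalence relation, let $R_<$ be the relation of $\I$ that interprets $<$, and let $f\colon D_0/\!\sim\ \to\mathbb{Q}$ be the induced isomorphism --- all $\L_{\om_1\om}$-definable in $\A$ --- and, since $\mathbb{Q}\neq\emptyset$, fix an $n$ with $D:=D_0\cap A^n\neq\emptyset$, putting $E:=\sim\cap(D\times D)$ and keeping $R_<$ restricted to $D$ (still $\L_{\om_1\om}$-definable, as $A^n$ is).

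The crucial observation, and the place where this argument has any content, is that $D$ still represents \emph{all} of $\mathbb{Q}$: given $\bar a\in D$ and $q\in\mathbb{Q}$, by transitivity of $\Aut(\mathbb{Q},<)$ on $\mathbb{Q}$ there is $\sigma\in\Aut(\mathbb{Q},<)$ with $\sigma(f([\bar a]))=q$; putting $g=H^{-1}(\sigma)$, the identity $G_\I(g)=\sigma$ and the definition of $G_\I$ give $f([\tilde g(\bar a)])=q$, while $\tilde g(\bar a)\in A^n$ (automorphisms preserve tuple length) and $\tilde g(\bar a)\in D_0$ ($D_0$ is automorphism-invariant), so $\tilde g(\bar a)\in D$. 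Hence $f$ restricts to a bijection $D/\!\sim\ \to\mathbb{Q}$, so $(D/\!\sim\,,R_<)\cong(\mathbb{Q},<)$ and $E$ has infinitely many classes. Order-indiscernibility then follows as in Theorem~\ref{thm: order indiscernibles}: an order-preserving permutation $h$ of $D/\!\sim$ transports through $f$ to some $\bar h\in\Aut(\mathbb{Q},<)=\Aut(\B)$, and $g:=H^{-1}(\bar h)$ satisfies $G_\I(g)=\bar h$, i.e.\ the induced action $\tilde g$ on $D/\!\sim$ equals $h$; as $g$ also preserves $D$, it is an automorphism of $\A$ extending $h$. Finally, expand $\A$ by a new $n$-ary predicate naming each $E$-class: an automorphism $g$ of this expansion fixes every $E$-class setwise, hence $\tilde g\upharpoonright(D/\!\sim)=\id$, and since $f$ restricts to a bijection $D/\!\sim\ \to\mathbb{Q}$ intertwining this action with $G_\I(g)$ we get $H(g)=G_\I(g)=\id$, so $g=\id$ by injectivity of $H$. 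Thus the expansion is rigid, and since a single element of a countable structure is $\L_{\om_1\om}$-definable without parameters exactly when it is fixed by every automorphism (\cite{Kueker68,Makkai69}), every element of $\A$ is $\L_{\om_1\om}$-definable once the $E$-classes are named, which is the remaining clause of $(2)$. The main obstacle I anticipate is precisely making this arity-cutdown compatible with both order-indiscernibility and the rigidity of the named expansion; both come out of the homogeneity observation above, which in the ordered setting uses the transitivity of $\Aut(\mathbb{Q},<)$ where the $S_\infty$ case of Theorem~\ref{thm:indiscernibleiso} uses transitivity of the full symmetric group.

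For $(2)\Rightarrow(1)$: given a witness $(n,D,E,R_<)$, the $E$-classes with the induced order form a copy of $(\mathbb{Q},<)$, and $g\mapsto\tilde g\upharpoonright(D/\!\sim)$ is a continuous homomorphism $H\colon\Aut(\A)\to\Aut(\mathbb{Q},<)$ --- the map already produced by Theorem~\ref{thm: order indiscernibles}, which is onto by order-indiscernibility. Its kernel consists of the automorphisms of $\A$ fixing every $E$-class setwise, i.e.\ of the automorphisms of the expansion of $\A$ by predicates naming the $E$-classes; since, by hypothesis, every element of that expansion is $\L_{\om_1\om}$-definable, the expansion is rigid and the kernel is trivial. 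Hence $H$ is a continuous isomorphism of $\Aut(\A)$ onto $\Aut(\mathbb{Q},<)$, its inverse being automatically continuous \cite[Exercise 2.3.5]{GaoBook}, which gives $(1)$.
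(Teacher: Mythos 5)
The paper does not actually give a proof of this theorem (or of Theorems~\ref{thm: order indiscernibles} and~\ref{thm:indiscernibleiso}); it only states them after remarking that they follow by ``a similar argument'' and ``by considering isomorphisms.'' Your proof is the natural fleshing-out of exactly what the authors intend: run the proof of Theorem~\ref{thm:indiscernibles} with $(\mathbb{Q},<)$ in place of the trivial structure, and obtain the extra rigidity clause from the injectivity of the isomorphism. The argument is correct, and in one respect it is cleaner than the printed proof of Theorem~\ref{thm:indiscernibles}: after the arity cut-down to $D=D_0\cap A^n$ you show that $D$ still meets \emph{every} $\sim$-class (using transitivity of $\Aut(\mathbb{Q},<)$), whereas the printed proof only shows that $D\cap A^n$ meets infinitely many $\sim$-classes and quietly uses that a permutation of a coinfinite subset extends to the whole set --- an extension step that, as you note, would \emph{not} be available in the ordered setting, so your stronger claim is genuinely needed here. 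The rigidity argument in both directions (rigidity of the named expansion $\Leftrightarrow$ triviality of $\ker H$, via the automorphism-invariance characterization of $\L_{\om_1\om}$-definability) is exactly right.

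One small point worth flagging, though it is a latent imprecision in the paper's statement of~(2) rather than a flaw in your reasoning: as literally written, (2) does not by itself force the definable order on $D/E$ to be dense without endpoints, so the ``copy of $(\mathbb{Q},<)$'' assertion at the start of your $(2)\Rightarrow(1)$ direction needs that to be read into the hypothesis. (If, say, the order on $D/E$ had type $\omega$, then order-indiscernibility and rigidity would be vacuous and $\Aut(\A)$ need not surject onto $\Aut(\mathbb{Q},<)$.) Your $(1)\Rightarrow(2)$ direction does produce a witness with $D/E\cong(\mathbb{Q},<)$, and this is clearly the intended reading; it would be worth stating that requirement explicitly in~(2).

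Also, a tiny slip: you write that $f\colon D_0/\!\sim\ \to\mathbb{Q}$ is ``$\L_{\om_1\om}$-definable,'' but Definition~\ref{def: inf int} does not (and cannot, in general) require the isomorphism $f^\A_\B$ itself to be definable --- only $\Do$, $\sim$, and the $R_i$. Fortunately nothing in your argument uses definability of $f$.
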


\bibliographystyle{alpha}
\bibliography{References}

\def\cprime{$'$} \def\cprime{$'$}
\begin{thebibliography}{HTMMM}

\bibitem[AK00]{HolyBible}
C.~J. Ash and J.~Knight.
\newblock {\em Computable structures and the hyperarithmetical hierarchy},
  volume 144 of {\em Studies in Logic and the Foundations of Mathematics}.
\newblock North-Holland Publishing Co., Amsterdam, 2000.

\bibitem[AZ86]{AhlbrandtZiegler}
Gisela Ahlbrandt and Martin Ziegler.
\newblock Quasi-finitely axiomatizable totally categorical theories.
\newblock {\em Ann. Pure Appl. Logic}, 30(1):63--82, 1986.
\newblock Stability in model theory (Trento, 1984).

\bibitem[BFKL16]{BFKL}
John~T. Baldwin, Sy~D. Friedman, Martin Koerwien, and Michael~C. Laskowski.
\newblock Three red herrings around {V}aught's conjecture.
\newblock {\em Trans. Amer. Math. Soc.}, 368(5):3673--3694, 2016.

\bibitem[EH90]{EvansHewitt}
David~M. Evans and P.~R. Hewitt.
\newblock Counterexamples to a conjecture on relative categoricity.
\newblock {\em Ann. Pure Appl. Logic}, 46(2):201--209, 1990.

\bibitem[Gao09]{GaoBook}
Su~Gao.
\newblock {\em Invariant descriptive set theory}, volume 293 of {\em Pure and
  Applied Mathematics (Boca Raton)}.
\newblock CRC Press, Boca Raton, FL, 2009.

\bibitem[Hjo07]{Hjorth07}
Greg Hjorth.
\newblock A note on counterexamples to the {V}aught conjecture.
\newblock {\em Notre Dame J. Formal Logic}, 48(1):49--51 (electronic), 2007.

\bibitem[HKSS02]{HKSS}
Denis~R. Hirschfeldt, Bakhadyr Khoussainov, Richard~A. Shore, and Arkadii~M.
  Slinko.
\newblock Degree spectra and computable dimensions in algebraic structures.
\newblock {\em Ann. Pure Appl. Logic}, 115(1-3):71--113, 2002.

\bibitem[Hod93]{Hodges93}
Wilfrid Hodges.
\newblock {\em Model theory}, volume~42 of {\em Encyclopedia of Mathematics and
  its Applications}.
\newblock Cambridge University Press, Cambridge, 1993.

\bibitem[HTIK]{HIK}
Matthew Harrison-Trainor, Gregory Igusa, and Julia~F. Knight.
\newblock Some new computable structures of high rank.
\newblock Preprint.

\bibitem[HTMMM]{HTM3}
Matthew Harrison-Trainor, Alexander Melnikov, Russell Miller, and Antonio
  Montalb\'an.
\newblock Computable functors and effective interpretability.
\newblock To appear in the \emph{Journal of Symbolic Logic}.

\bibitem[Kue68]{Kueker68}
David~W. Kueker.
\newblock Definability, automorphisms, and infinitary languages.
\newblock In Jon Barwise, editor, {\em The Syntax and Semantics of Infinitary
  Languages}, pages 152--165. Springer Berlin Heidelberg, Berlin, Heidelberg,
  1968.

\bibitem[Mak69]{Makkai69}
M.~Makkai.
\newblock An application of a method of {S}mullyan to logics on admissible
  sets.
\newblock {\em Bull. Acad. Polon. Sci. S\'er. Sci. Math. Astronom. Phys.},
  17:341--346, 1969.

\bibitem[Mon]{MonICM}
Antonio Montalb\'an.
\newblock Computability theoretic classifications for classes of structures.
\newblock To appear in the \emph{Proccedings of the ICM 2014}.

\bibitem[Mon13]{MonFixed}
Antonio Montalb{\'a}n.
\newblock A fixed point for the jump operator on structures.
\newblock {\em J. Symbolic Logic}, 78(2):425--438, 2013.

\bibitem[Mon15]{MonScott}
Antonio Montalb{\'a}n.
\newblock A robuster {S}cott rank.
\newblock {\em Proc. Amer. Math. Soc.}, 143(12):5427--5436, 2015.

\bibitem[MPSS]{MPSS}
R.~Miller, B.~Poonen, H.~Schoutens, and A.~Shlapentokh.
\newblock A computable functor from graphs to fields.
\newblock To appear.

\bibitem[She84]{Shelah}
Saharon Shelah.
\newblock Can you take {S}olovay's inaccessible away?
\newblock {\em Israel J. Math.}, 48(1):1--47, 1984.

\bibitem[Sol70]{Solovay}
Robert~M. Solovay.
\newblock A model of set-theory in which every set of reals is {L}ebesgue
  measurable.
\newblock {\em Ann. of Math. (2)}, 92:1--56, 1970.

\end{thebibliography}

\end{document}